\setlist{nolistsep}
\newtheorem{theorem}{Theorem}[section]
\newtheorem{lemma}[theorem]{Lemma}
\newtheorem{corollary}[theorem]{Corollary}
\newtheorem{definition}[theorem]{Definition}
\newtheorem{example}[theorem]{Example}
\newtheorem{remark}[theorem]{Remark}
\newtheorem{result}[theorem]{Result}
\newenvironment{proof}{\noindent{\bf Proof}\hspace{0.5em}}
    { \null  \hfill $\square$ \par}
\DeclareMathOperator{\Aut}{Aut} 
\DeclareMathOperator{\rad}{rad}
\newcommand{\X}{\mathcal X}
\newcommand{\Y}{\mathcal Y}
\newcommand\C{{\cal C}}
\newcommand\E{{\cal E}}
\newcommand\F{{\cal F}}
\renewcommand{\P}{\mathcal P}
\newcommand{\K}{\mathcal K}
\renewcommand{\H}{\mathcal H}
\newcommand{\Q}{\mathscr Q}
\renewcommand\setminus{\backslash}
\newcommand{\st}{:}
\newcommand\GF{{\rm GF}}
\newcommand\PG{{\rm PG}}
\newcommand\PGO{{\rm PGO}}
\newcommand{\Label}{\label}
\newcommand{\clique}{\mathcal C}
\newcommand{\setX}{{\mathcal C}_{{\rm\romannumeral 1}}}
\newcommand{\setY}{{\mathcal C}_{{\rm\romannumeral 2}}}
\newcommand{\setZ}{{\mathcal C}_{{\rm\romannumeral 3}}}
\newcommand\tthree{{\rm (\romannumeral 3)}}
\renewcommand{\Pr}{\P_{2r}}
\newcommand{\Er}{\E_{2r+1}}
\newcommand{\Hr}{\H_{2r+1}}
\begin{document}
%
%

\title{New families of strongly regular graphs}

\author{S.G. Barwick, Wen-Ai Jackson and Tim Penttila}
\date{}
\maketitle
Corresponding Author: Dr Susan Barwick, University of Adelaide, Adelaide
5005, Australia. Phone: +61 8 8313 3983, Fax: +61 8 8313 3696, email:
susan.barwick@adelaide.edu.au

Keywords: strongly regular graphs, projective geometry, quadrics

AMS codes: 51E20, 
05B25, 
05C62
\begin{abstract} In this article we construct a series of new infinite families of strongly regular graphs with the same parameters as the point-graphs of non-singular quadrics in $\PG(n,2)$. 
\end{abstract}
%

\section{Introduction}

A strongly regular graph srg$(v,k,\lambda,\mu)$, is a graph with  $v$ vertices such that
each vertex lies on $k$ edges;
any two adjacent vertices have exactly $\lambda$ common neighbours; and 
any two non-adjacent vertices have exactly $\mu$ common neighbours.
%
%
We consider the strongly regular graphs constructed  from a non-singular  quadric $\Q_n$ in $\PG(n,q)$. 
The {\em point-graph} $\Gamma_{\Q_n}$ of $\Q_n$ has vertices corresponding to the points of $\Q_n$. Two vertices in $\Gamma_{\Q_n}$  are adjacent  if the corresponding points of $\Q_n$ lie on a line contained in  $\Q_n$. It is well known (see for example \cite{brouwer89}) that $\Gamma_{\Q_n}$ is a strongly regular graph. In this article we let $q=2$, and construct from  $\Gamma_{\Q_n}$  approximately $n/2$ new strongly regular graphs with the same parameters as  $\Gamma_{\Q_n}$ (see Table~\ref{table3} for a precise count). 

This article proceeds as follows. Section~\ref{background} contains  several preliminary results we need. Section~\ref{sec:const} describes our construction of a series of infinite families of strongly regular graphs, the proof of the construction is given in  Section~\ref{sec:pf}. In Section~\ref{sec-cliques}, we classify and count the maximal cliques in the new graphs.
In  Section~\ref{sec-new}  we prove that our construction yields new families of strongly regular graphs. Finally, in Section~\ref{sec:autgp}, we determine the automorphism group of the new graphs.

In previous work, Kantor~\cite{kantor} 
constructed a strongly regular graph from $\Gamma_{\Q_n}$ with the same parameters in the case when $\Q_n$ contains a spread. Kantor conjects that his graph is not isomorphic to  $\Gamma_{\Q_n}$. 
We show in Section~\ref{sec:kantor} that the graph constructed by Kantor is not isomorphic to any of our new graphs. 
Abiad and Haemers~\cite{Abiad}  construct
several strongly regular graphs from the symplectic graph over $\GF(2)$. The dual of these graphs have the same parameters as the point-graph of a non-singular parabolic quadric, so $n$ is even.  It is not known if these graphs are isomorphic to our examples with $n$ even.

\section{Background Results}\Label{background}

In \cite{godsil}, Godsil and McKay take a graph $\Gamma$,  and use a vertex partition to  construct a new graph $\Gamma'$ that has the same spectrum as $\Gamma$. It is well-known (see for example \cite{brouwer}) that if a graph $\Gamma'$ has the same spectrum as a strongly regular graph $\Gamma$, then $\Gamma'$ is also strongly regular with the same parameters as $\Gamma$. Specialising the Godsil-McKay construction  to a partition of size two in a strongly regular graph gives the following result. 

\begin{result}\Label{GMK} 
\begin{enumerate} 
\item 
A {\em Godsil-McKay partition} of a graph is  a partition of the vertices into two sets $\{\X,\Y\}$ satisfying: 
\begin{itemize}
\item[{\rm I.}] The set $\X$ induces a regular subgraph. 
\item[{\rm II.}] Each vertex in $\Y$ is adjacent to $0$, $\frac12|\X|$ or $|\X|$ vertices in $\X$. 
\end{itemize}
\item {\em Godsil-McKay construction.} Let $\Gamma$ be a strongly regular graph with Godsil-McKay partition $\{\X,\Y\}$. Construct the graph $\Gamma'$ with the same points and edges as $\Gamma$, except: for each vertex $R$ in $\Y$ with $\frac12|\X|$ neighbours in $\X$, delete these $\frac12|\X|$ edges and join $R$ to the other $\frac12|\X|$ vertices in $\X$.
Then the graph $\Gamma'$ is strongly regular with the same parameters as $\Gamma$. 
\end{enumerate}
\end{result}

Let $\Q_n$ be a  non-singular quadric in $\PG(n,q)$.
The {\em projective index} $g$ of $\Q_n$ is the dimension of the largest subspace contained in $\Q_n$. A $g$-space contained in $\Q_n$ is called a {\em generator} of $\Q_n$.  If $n=2r$ is even, then a non-singular quadric is called a parabolic quadric, denoted  $\Pr$, which has projective index $g=r-1$.
If $n=2r+1$ is odd, then there are two types of non-singular quadrics: the elliptic quadric denoted $\Er$ has  projective index $g=r-1$; and the hyperbolic quadric denoted $\Hr$ has  projective index $g=r$. 
The points and generators of $\Q_n$ also form a  polar space of rank $g+1$. 
We repeatedly use the following two properties of quadrics and polar spaces, see 
\cite[Chapter 22]{HT} for more information on quadrics, and \cite[Section 26.1]{HT} for more information on polar spaces.

\begin{result} \Label{quadric-deg}
Let $\Q_n$ be a non-singular quadric in $\PG(n,q)$ and let $\Pi$ be a $k$-space. If the quadric $\Q_n\cap\Pi$ contains a $(k-1)$-space, then $\Q_n\cap\Pi$ is either $\Pi$, or one or two $(k-1)$-spaces.
\end{result}

\begin{result}\Label{property-star}
 Let $\Q_n$ be a non-singular quadric in $\PG(n,2)$, with projective index $g$. Let $\Sigma$ be  a generator of $\Q_n$, and $X$ a point of $\Q_n$ not in $\Sigma$. Then there is a unique generator $\Pi$ of $\Q_n$ that contains $X$ and meets $\Sigma$ in a $(g-1)$-space.  Further, the  points in $\Sigma$ which lie on a line of $\Q_n$ through  $X$ are exactly the points in $\Sigma\cap\Pi$.
\end{result}

\section{Our construction}\Label{sec:const}

We begin with a small example 
to illustrate the general construction. 

\begin{example}\Label{example} {\rm 
 Let $\ell$ be a line of the elliptic quadric  $\E=\Er$ in $\PG(2r+1,q)$. Partition the points of $\E$ into the following three types.
\begin{itemize}
\item[(i)] points of $\E$ on $\ell$, 
\item[(ii)] points of $\E$ that are on a plane of $\E$ that contains $\ell$, 
\item[(iii)] the  remaining points of $\E$.
\end{itemize}
Define a new graph $\Gamma_1$ with vertices the points of $\E$, and  edges given in  Table~\ref{table-edges}.
\begin{table}[h]
  \caption{Edges in $\Gamma_1$}\label{table-edges}
\begin{center}
\begin{tabular}{|c|l|l|}
\hline
Vertex pair & Vertex types &  Vertex pair  is an edge of $\Gamma_1$: \\
\hline
 $P,P'$  & $P,P'$ are type (i)&always (as $PP'$ is always a line of $\E$)\\
 $P,Q$ &  $P$ is type (i), $Q$ is type (ii)& always (as $PQ$ is always a line of $\E$)\\
 $Q,Q'$ & $Q,Q'$ are type (ii)&  when $QQ'$ is a line of $\E$\\
 $P,R$ & $P$ is type (i), $R$ is type (iii)&  when $PR$ is a line of $\E$\\
 $R,R'$ & $R,R'$ are type (iii)&   when $RR'$ is a line of $\E$\\
 $Q,R$ & $Q$ is type (ii), $R$ is type (iii)&  when  $QR$ is a $2$-secant of $\E$\\ \hline
 \end{tabular}
\end{center} \end{table}
Note that the last row of   Table~\ref{table-edges} describes the edges of $\Gamma_1$ that are different to the edges of the point-graph $\Gamma_\E$  of $\E$.

It can be shown directly using geometric techniques that $\Gamma_1$ is regular if and only if $q=2$, and that in this case $\Gamma_1$ is strongly regular with the same parameters as $\Gamma_\E$. 
This can also be proved using  the Godsil-McKay construction as follows.
Consider the partition $\{\X,\Y\}$ of $\Gamma_\E$ where $\X$ contains the vertices of type (ii), and $\Y$ contains the vertices of type (i) and (iii). Geometric techniques can be used to show that this partition satisfies the conditions of Result~\ref{GMK}(1) if and only if $q=2$. Note that the graph constructed in Result~\ref{GMK}(2) from this partition is the graph $\Gamma_1$, hence
 $\Gamma_1$  is strongly regular when $q=2$.   \hfill$\square$} \end{example}

We now give our general construction of a series of infinite families of strongly regular  graphs. This construction generalises Example~\ref{example}. First we define a partition of the vertices  of the point-graph of $\Q_n$.

\begin{definition}\Label{def-partition} 
Let $\Q_n$ be a non-singular quadric in $\PG(n,q)$, and let $\Gamma$ be the point-graph of $\Q_n$. 
Let $s$ be an integer with $0\leq s<g$, where $g$ is the projective index of $\Q_n$. 
Let $\alpha_s$ be an $s$-dimensional subspace  contained in ${\Q_n}$. 
The points of $\Q_n$  (and so the vertices of $\Gamma$) can be partitioned into three types:
\begin{itemize}
\item[ {\rm (i)}]  points in $\alpha_s$,
\item [{\rm (ii)}]  points of $\Q_n\setminus\alpha_s$ that lie in some $(s+1)$-dimensional subspace $\Pi$ with $\alpha_s\subset\Pi\subset\Q_n$, 
\item[{\rm (iii)}]  the remaining points of $\Q_n$.
\end{itemize}
Let $\X_s$ be the vertices of $\Gamma$ of type {\rm (ii)} and let $\Y_s$ be the vertices of $\Gamma$ of type {\rm (i)} and {\rm (iii)}.
\end{definition}
Note that if $s=g$, then there are no points of type (ii), so we  need $s<g$.
We will  show that the partition $\{\X_s,\Y_s\}$, $0\leq s<g$,  is a Godsil-McKay partition if and only if $q=2$. 
By \cite[Theorem 26.6.6]{HT}, the group fixing $\Q_n$ is transitive on the subspaces of dimension $s$ contained in $\Q_n$. So for each $s$,  $0\leq s<g$, we can use Result~\ref{GMK} to construct   a unique strongly regular graph $\Gamma_s$ from $\Gamma$. 
We state the main result here, and give the proof in Section~\ref{sec:pf}.

\begin{theorem}\Label{main-thm} In $\PG(n,2)$, let $\Q_n$ be a non-singular quadric of projective index $g\geq1$ with point-graph $\Gamma$. For each integer $s$, $0\leq s <g$,  
let $\Gamma_{s}$ be the graph obtained using the  Godsil-McKay construction with the partition $\{\X_s,\Y_s\}$
 defined in Definition~{\rm \ref{def-partition}}. Then $\Gamma_s$ is a strongly regular graph with the same parameters as $\Gamma$.
\end{theorem}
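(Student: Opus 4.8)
The plan is to invoke Result~\ref{GMK}: since $q=2$, it suffices to prove that $\{\X_s,\Y_s\}$ satisfies conditions~I and~II of a Godsil--McKay partition, whereupon the strong regularity of $\Gamma_s$ with the parameters of $\Gamma$ is immediate. The first step is to recast the three vertex types using the polar form associated with $\Q_n$. Writing $\alpha_s^{\perp}$ for the polar space of $\alpha_s$, a point $Y\notin\alpha_s$ spans a totally singular line with every point of $\alpha_s$ — equivalently $\langle\alpha_s,Y\rangle\subseteq\Q_n$ — precisely when $Y\in\alpha_s^{\perp}$. Hence $\X_s=(\Q_n\cap\alpha_s^{\perp})\setminus\alpha_s$ and the type~(iii) set is $\Q_n\setminus\alpha_s^{\perp}$. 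Moreover $\X_s$ splits into \emph{blocks} $\Pi\setminus\alpha_s$, one for each $(s+1)$-space $\Pi$ with $\alpha_s\subset\Pi\subset\Q_n$; each block has $2^{s+1}$ vertices and is a clique, as $\Pi\subseteq\Q_n$. The blocks are indexed by the points of the quotient quadric $\Q'$ on $\alpha_s^{\perp}/\alpha_s$, a non-singular quadric of the same type and projective index $g-s-1$ whose point-graph is strongly regular; two blocks arise from collinear points of $\Q'$ exactly when the corresponding $(s+1)$-spaces span a totally singular $(s+2)$-space.

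Next I would verify condition~I. Fix a type~(ii) vertex $Y$ in the block $\Pi_0$; its neighbours inside $\Pi_0$ are the remaining $2^{s+1}-1$ vertices of that clique. For any other block $\Pi$ (so $Y\notin\Pi$), the type~(ii) points of $\Pi$ collinear with $Y$ are the points of $Y^{\perp}\cap\Pi$ outside $\alpha_s$. Since $Y^{\perp}$ is a hyperplane of the ambient space containing $\alpha_s$, Results~\ref{quadric-deg} and~\ref{property-star} force $Y^{\perp}\cap\Pi$ to be either all of $\Pi$ or exactly $\alpha_s$; the former occurs iff $Y\perp\Pi$, that is, iff the two blocks correspond to collinear points of $\Q'$, giving $2^{s+1}$ neighbours, and otherwise there are none. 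Thus the number of type~(ii) neighbours of $Y$ equals $(2^{s+1}-1)+2^{s+1}\deg_{\Q'}(\overline Y)$, which is constant because the point-graph of $\Q'$ is regular. Hence $\X_s$ induces a regular subgraph.

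Then I would verify condition~II for each $R\in\Y_s$. If $R$ is type~(i), then $R\in\alpha_s\subset\Pi$ for every block, so $RY\subseteq\Pi\subseteq\Q_n$ for every type~(ii) vertex $Y$, and $R$ is adjacent to all $|\X_s|$ vertices of $\X_s$. If $R$ is type~(iii), then $R\notin\alpha_s^{\perp}$, so $R^{\perp}\cap\alpha_s$ is a hyperplane $\beta$ of $\alpha_s$ (an $(s-1)$-space). In each block $\Pi$ the type~(ii) neighbours of $R$ are the points of $(R^{\perp}\cap\Pi)\setminus\alpha_s$; applying Result~\ref{property-star} to a generator containing $\Pi$ (which cannot contain $R$, as $R\notin\alpha_s^{\perp}$) shows $R^{\perp}\cap\Pi$ is a hyperplane of $\Pi$ meeting $\alpha_s$ in $\beta$. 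Counting points, $R$ has $(2^{s+1}-1)-(2^{s}-1)=2^{s}$ neighbours in each block, that is, exactly half of each block, and so $\tfrac12|\X_s|$ neighbours in $\X_s$ overall. This establishes conditions~I and~II, and the theorem follows from Result~\ref{GMK}.

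The main obstacle is the exact half-count in condition~II for type~(iii) vertices: one must show that in \emph{every} block the perpendicular hyperplane cuts off precisely half of the $2^{s+1}$ type~(ii) points, never all and never none. This is where $q=2$ is indispensable, since the identical argument over $\GF(q)$ yields a fraction $1/q$ rather than $1/2$ — exactly the reason the partition is a Godsil--McKay partition only when $q=2$. The supporting geometric inputs (that collinearity with $R$ restricts to a proper hyperplane of each $\Pi$, using $R\notin\alpha_s^{\perp}$, and that collinearity with a type~(ii) vertex $Y$ restricts to either all of $\Pi$ or just $\alpha_s$) are precisely the content of Results~\ref{quadric-deg} and~\ref{property-star}, so the remaining work is bookkeeping with subspace dimensions over $\GF(2)$.
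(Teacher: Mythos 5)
Your proof is correct, and while it necessarily shares the paper's skeleton (verify Conditions I and II of Result~\ref{GMK}, then apply the Godsil--McKay construction), the verification runs on genuinely different machinery. The paper works with explicit subspace counts over $\GF(q)$: Condition I is proved in Lemma~\ref{lemma-ell-2} by splitting the lines of $\Q_n$ through a type (ii) point into the families $\F_1,\F_2,\F_3$ and counting each (which requires the count (\ref{eqn-1}) of $(s+1)$-spaces through $\alpha_s$), and Condition II is proved in Lemma~\ref{lemma-ell-3} by computing the total number $y$ of type (ii) neighbours of a type (iii) point and comparing it with $\frac12|\X_s|$ from Lemma~\ref{lemma-ell-1}. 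You instead use the polarity and the quotient polar space: Condition I reduces to regularity of the point-graph of the quotient quadric $\Q'$ on $\alpha_s^{\perp}/\alpha_s$, giving degree $(2^{s+1}-1)+2^{s+1}\deg_{\Q'}(\bar Y)$ with no counting at all (this agrees with the paper's $k$ at $q=2$, as one can check), and your per-block half-count for Condition II --- $R^{\perp}$ cutting each block $\Pi\setminus\alpha_s$ in exactly $2^{s}$ of its $2^{s+1}$ points --- is a local refinement of the paper's global equality $y=\frac12|\X_s|$; in fact the paper's Lemma~\ref{lemma-ell-3} implicitly proves the same local fact, identifying the neighbours of $R$ in the block $\Sigma\setminus\alpha_s$ as $(\Sigma\cap\Sigma')\setminus\alpha_s$ via the two-component section $\langle\Sigma,R\rangle\cap\Q_n=\Sigma\cup\Sigma'$ of Result~\ref{quadric-deg}, and your $R^{\perp}\cap\Sigma$ is precisely that $\Sigma\cap\Sigma'$. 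What your route buys is transparency: the role of $q=2$ is visible at a glance ($q^{s}$ of $q^{s+1}$ points per block, a fraction $1/q$), and all Gaussian-coefficient bookkeeping disappears; what it costs is importing the quotient-quadric facts (non-singular, same type, projective index $g-s-1$), which the paper never needs, whereas the paper's explicit counts of $|\X_s|$ and $k$ are reused later in the clique-counting sections, so its bookkeeping is not wasted. Two small points of hygiene, neither a gap: your attributions to Results~\ref{quadric-deg} and~\ref{property-star} are loose --- the facts you actually use ($Y^{\perp}\cap\Pi\in\{\Pi,\alpha_s\}$, and $R^{\perp}\cap\Pi$ a hyperplane of $\Pi$ meeting $\alpha_s$ in an $(s-1)$-space) follow directly from the polarity once you note $\alpha_s\subseteq Y^{\perp}$ and $\alpha_s\not\subseteq R^{\perp}$, so state them that way; and in the parabolic case over $\GF(2)$ the bilinear form is degenerate with radical the nucleus $N$, so you should remark that $R^{\perp}$ is still a hyperplane for singular $R$ (as $R\neq N$) and that the quadratic form descends to $\alpha_s^{\perp}/\alpha_s$. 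Note also that the paper's Lemma~\ref{lemma-ell-3} establishes an ``if and only if $q=2$'' statement, whereas you prove only sufficiency --- which is all Theorem~\ref{main-thm} requires.
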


We show in Section~\ref{sec-new} that $\Gamma_0\cong\Gamma$, and that for each $n$,  $\Gamma_s$, $0\leq s<g$ are $g-1$ non-isomorphic graphs.

\section{Proof of Theorem~\ref{main-thm}}\Label{sec:pf}

%

Throughout this section, let $\Q_n$ be a  non-singular quadric in $\PG(n,q)$ of projective index $g$, and let $\alpha_s$ be a subspace of dimension $s$, $0\leq s<g$,  contained in $\Q_n$. Let $\Gamma$ be the point-graph of $\Q_n$, and let $\{\X_s,\Y_s\}$ be the partition of the vertices of $\Gamma$ (and so of the points of $\Q_n$) defined in Definition~\ref{def-partition}. We will show that $\{\X_s,\Y_s\}$ satisfies Conditions I and II of Result~\ref{GMK}. First we count the points in $\X_s$.

\begin{lemma}\Label{lemma-ell-1} 
\begin{enumerate}
\item If  $\Q_n=\Er$, then $|\X_s|=\displaystyle \frac{q^{s+1}(q^{r-s}+1)(q^{r-s-1}-1)}{(q-1)}$.\\[1mm]
\item If  $\Q_n=\Hr$,  then $|\X_s|=\displaystyle \frac{q^{s+1}(q^{r-s-1}+1)(q^{r-s}-1)}{(q-1)}$.\\[1mm]
\item If  $\Q_n=\Pr$, then  $|\X_s|=\displaystyle\frac{q^{s+1}(q^{r-s-1}+1)(q^{r-s-1}-1)}{(q-1)}$.
\end{enumerate}
\end{lemma}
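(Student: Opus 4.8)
The plan is to identify the vertices of $\X_s$ explicitly and then reduce their number to the point-count of a smaller quadric of the same type as $\Q_n$. By Definition~\ref{def-partition}, a vertex $X$ lies in $\X_s$ precisely when $X\in\Q_n\setminus\alpha_s$ and $X$ lies in some $(s+1)$-space $\Pi$ with $\alpha_s\subset\Pi\subset\Q_n$. First I would note that such a $\Pi$ is unique: since $X\notin\alpha_s$, the span $\langle\alpha_s,X\rangle$ is an $(s+1)$-space, and any $\Pi$ as above contains $\langle\alpha_s,X\rangle$ while having the same dimension, so $\Pi=\langle\alpha_s,X\rangle$. Consequently the type (ii) points are partitioned by the $(s+1)$-spaces $\Pi$ contained in $\Q_n$ through $\alpha_s$: each such $\Pi$ contributes exactly the points of $\Pi\setminus\alpha_s$, and these contributions are pairwise disjoint, because two distinct $(s+1)$-spaces sharing the $s$-space $\alpha_s$ meet in exactly $\alpha_s$.

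Next I would count. A short computation gives $|\Pi\setminus\alpha_s|=\frac{q^{s+2}-1}{q-1}-\frac{q^{s+1}-1}{q-1}=q^{s+1}$, so
$$|\X_s| = q^{s+1}\cdot N,$$
where $N$ is the number of $(s+1)$-spaces contained in $\Q_n$ and containing $\alpha_s$. The crux is to evaluate $N$ through the residue of $\alpha_s$ in the polar space of $\Q_n$: passing to the quotient by $\alpha_s$, the subspaces contained in $\Q_n$ through $\alpha_s$ correspond to the subspaces of a non-singular quadric $\Q'$ living in a $\PG(n-2s-2,q)$ of the same type as $\Q_n$ (standard quadric theory, see \cite[Chapter 22]{HT}). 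Under this correspondence the $(s+1)$-spaces through $\alpha_s$ contained in $\Q_n$ are exactly the points of $\Q'$, whence $N=|\Q'|$ and $|\X_s|=q^{s+1}|\Q'|$.

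Finally I would determine the type and ambient dimension of $\Q'$ in each case and substitute the standard point-counts for quadrics. For $\Q_n=\Er$ (so $n=2r+1$) the residue $\Q'$ is elliptic in $\PG(2(r-s)-1,q)$, giving $|\Q'|=\frac{(q^{r-s-1}-1)(q^{r-s}+1)}{q-1}$; for $\Q_n=\Hr$ it is hyperbolic in $\PG(2(r-s)-1,q)$, giving $|\Q'|=\frac{(q^{r-s-1}+1)(q^{r-s}-1)}{q-1}$; and for $\Q_n=\Pr$ (so $n=2r$) it is parabolic in $\PG(2(r-s-1),q)$, giving $|\Q'|=\frac{(q^{r-s-1}-1)(q^{r-s-1}+1)}{q-1}$. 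Multiplying each by $q^{s+1}$ yields the three displayed formulas. I expect the main obstacle to be the middle step: verifying that the residue of $\alpha_s$ is a non-singular quadric \emph{of the same type} and in the stated dimension, and then correctly matching that type and dimension to the indexing conventions $\Er,\Hr,\Pr$ of the paper; once this is fixed, the remainder is routine bookkeeping with the known point-counts.
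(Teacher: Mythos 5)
Your proof is correct, but it evaluates the key quantity by a genuinely different route than the paper. Both arguments share the same first reduction: each type (ii) point $X$ lies in the unique $(s+1)$-space $\langle\alpha_s,X\rangle$, distinct such spaces through $\alpha_s$ meet exactly in $\alpha_s$, and each contributes $q^{s+1}$ points, so $|\X_s|=q^{s+1}N$ with $N$ the number of $(s+1)$-spaces through $\alpha_s$ contained in $\Q_n$ (you make the uniqueness and disjointness explicit, which the paper leaves implicit). Where you diverge is in computing $N$: the paper double-counts flags $(\Pi,\Sigma)$ with $\Pi$ an $s$-space and $\Sigma$ an $(s+1)$-space of $\Q_n$, $\Pi\subset\Sigma$, feeding in the global subspace counts of \cite[Theorem 22.5.1]{HT} and invoking transitivity of the orthogonal group to know the number through the fixed $\alpha_s$ is a constant; you instead pass to the residue, identifying $N$ with the number of points of the non-singular quadric $\Q'$ of the same type in the quotient $\alpha_s^{\perp}/\alpha_s\cong\PG(n-2s-2,q)$, and your identifications of type and dimension (elliptic and hyperbolic residues in $\PG(2(r-s)-1,q)$, parabolic in $\PG(2(r-s-1),q)$) and the resulting point counts all check against the three displayed formulas. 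Your route is more conceptual — it explains why $N$ is literally the point count of a smaller quadric of the same type, and it avoids both the unwieldy quotient of products and the transitivity argument — at the cost of relying on the cone/residue structure $\Q_n\cap\alpha_s^{\perp}=\alpha_s\Q'$ (standard, per \cite[Chapter 22]{HT}, and valid for $\Pr$ in characteristic $2$ despite the degenerate bilinear form there, since the residue statement holds at the level of the quadric or polar space), whereas the paper's route uses nothing beyond elementary double counting with cited formulas.
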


\begin{proof}
We prove this in the case $\Q_n$ is $\E=\Er$, which has projective index $g=r-1$ and point-graph denoted $\Gamma_\E$. The cases when $\Q_n$ is $\Hr$ and $\Pr$ are proved in a very similar manner. 

 By \cite[Theorem 22.5.1]{HT},  the number of subspaces of dimension $s$ contained in $\E$ is 
 $$\frac{\Big((q^{r-s+1}+1)(q^{r-s+2}+1)\cdots(q^{r+1}+1)\Big)\times \Big((q^{r-s}-1)(q^{r-s+1}-1)\cdots(q^r-1)\Big)}{(q-1)(q^2-1)\cdots(q^{s+1}-1)}.$$ 
 Moreover, replacing `$s$' by  `$s+1$' in this equation gives  
 the number of subspaces of dimension $s+1$ contained in $\E$.
By \cite[Theorem 3.1]{hirs98}, the number of subspaces of dimension $s$ in a subspace of dimension $s+1$ is
$\big(q^{s+2}-1\big)/\big(q-1\big)$.
By \cite{HT}, the number of subspaces of dimension $s+1$ that contain $\alpha_s$ and are contained in $\E$ is a constant. To calculate it, we
 count  ordered pairs
 $(\Pi,\Sigma)$ where $\Pi$ is an $s$-dimensional subspace contained in $\E$, $\Sigma$ is an $(s+1)$-dimensional subspace contained in $\E$, and $\Pi\subset\Sigma$. 
 This count gives the number of subspaces of dimension $s+1$ that contain $\alpha_s$ and are contained in $\E$ is 
 \begin{equation}\label{eqn-1}
x=\frac{ (q^{r-s}+1)(q^{r-s-1}-1)}{(q-1)}. 
 \end{equation}
 Each of these subspace contains $q^{s+1}$ points that are not in $\alpha_s$. Hence $|\X_s|=xq^{s+1}$ as required.
\end{proof}

We now show that $\{\X_s,\Y_s\}$ satisfies Condition I of Result~\ref{GMK}. 

\begin{lemma}\Label{lemma-ell-2} 
Let $\Gamma^*$ be the subgraph of $\Gamma$ on the vertices in   $\X_s$. Then $\Gamma^*$ is a regular graph with degree $k$ where:
\begin{enumerate}
\item if $\Q_n=\Er$, then 
$\displaystyle k=(q^{s+1}-1)+\frac{q^{s+2}(q^{r-s-1}+1)(q^{r-s-2}-1)}{(q-1)}$;\\[1mm]
\item if $\Q_n=\Hr$, then 
$\displaystyle k=(q^{s+1}-1)+\frac{q^{s+2}(q^{r-s-2}+1)(q^{r-s-1}-1)}{(q-1)}$;\\[1mm]
\item if $\Q_n=\Pr$, then 
$\displaystyle k=(q^{s+1}-1)+\frac{q^{s+2}(q^{r-s-2}+1)(q^{r-s-2}-1)}{(q-1)}.$
\end{enumerate}
\end{lemma}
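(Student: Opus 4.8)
The plan is to work inside the polar space of $\Q_n$ and reduce the count to the quotient of $\Q_n$ by $\alpha_s$. First I would record the key reformulation of type (ii): a point $P\notin\alpha_s$ is of type (ii) exactly when $\langle\alpha_s,P\rangle$ is a (necessarily unique) $(s+1)$-space contained in $\Q_n$, and this happens iff $P$ lies in the tangent hyperplane of every point of $\alpha_s$, i.e.\ iff $P\in\alpha_s^\perp\cap\Q_n$. Thus type (i) and type (ii) points together are exactly the points of $\Q_n\cap\alpha_s^\perp$, and the $(s+1)$-spaces $\Pi$ with $\alpha_s\subset\Pi\subset\Q_n$ are in bijection with the points of the non-singular quotient quadric $\Q'$ induced on $\alpha_s^\perp/\alpha_s$, which is of the same type as $\Q_n$ and lives in $\PG(n-2s-2,q)$ (see \cite[Chapter 22, Section 26.1]{HT}). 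Each such $\Pi$ carries exactly the $q^{s+1}$ type (ii) points of $\Pi\setminus\alpha_s$, recovering $|\X_s|=xq^{s+1}$ of Lemma~\ref{lemma-ell-1}.

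Next, fixing a type (ii) vertex $Q$ and writing $\Pi=\langle\alpha_s,Q\rangle$, I would count the type (ii) neighbours of $Q$ in $\Gamma^*$ by splitting them according to whether they lie in $\Pi$. Those in $\Pi$ contribute exactly the $q^{s+1}-1$ points of $\Pi\setminus(\alpha_s\cup\{Q\})$, since $\Pi\subseteq\Q_n$ makes every such point collinear with $Q$ on a line of $\Q_n$; this is the first summand of $k$. For a type (ii) point $Q'$ with $\Pi'=\langle\alpha_s,Q'\rangle\neq\Pi$, the heart of the argument is the equivalence: $QQ'$ is a line of $\Q_n$ $\iff$ $\langle\Pi,\Pi'\rangle$ is a totally singular $(s+2)$-space $\iff$ the images $\overline{\Pi},\overline{\Pi'}$ are collinear on $\Q'$. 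The forward direction uses that $Q'\in Q^\perp$ forces $\Pi'\subseteq Q^\perp$ and $\alpha_s\subseteq(\Pi')^\perp$, whence $\Pi\subseteq(\Pi')^\perp$, and the span of two mutually perpendicular totally singular subspaces is again totally singular; the backward direction is immediate since the preimage of the line $\overline{\Pi}\,\overline{\Pi'}$ is an $(s+2)$-space inside $\Q_n$ containing $Q$ and all of $\Pi'$. Consequently the neighbours of $Q$ outside $\Pi$ are exactly the $q^{s+1}$ type (ii) points in each $\Pi'$ whose image is collinear with $\overline{\Pi}=\overline{Q}$ on $\Q'$.

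Finally, the number of points collinear with a fixed point on the point-graph of $\Q'$ is its valency, which for a non-singular quadric equals $q$ times the number of points of the base quadric of its tangent cone, a quadric of the same type two projective dimensions lower. Hence the second summand of $k$ is $q^{s+1}$ times this valency, and substituting the standard point counts for $\Er$, $\Hr$, $\Pr$ (equivalently applying the count behind \eqref{eqn-1} one dimension down) yields the three stated expressions; since the count used nothing about $Q$ beyond its type, $\Gamma^*$ is regular. The main obstacle is the middle equivalence, specifically verifying that $QQ'\subseteq\Q_n$ really propagates to $\langle\Pi,\Pi'\rangle\subseteq\Q_n$: this is where the perpendicularity bookkeeping must be done carefully, though the polar-space formalism of \cite[Section 26.1]{HT} makes it uniform in $q$.
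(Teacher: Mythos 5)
Your proposal is correct, and it reaches the stated degrees by a genuinely different route from the paper. The paper argues synthetically inside the ambient quadric: fixing $Q$ with $\Sigma=\langle\alpha_s,Q\rangle$, it partitions the lines of $\Q_n$ through $Q$ into $\F_1$ (lines in $\Sigma$), $\F_2$ (lines lying in some $(s+2)$-space of $\Q_n$ containing $\Sigma$) and $\F_3$ (the rest), computes $|\F_2|$ by re-applying the pair count behind~(\ref{eqn-1}) with $s$ replaced by $s+1$, and shows $\F_3$ contributes nothing via Result~\ref{quadric-deg}: if a line $\ell\in\F_3$ carried a second type (ii) point $Q'$, then $\langle\alpha_s,\ell\rangle$ would meet $\Q_n$ in exactly the two $(s+1)$-spaces $\langle\alpha_s,Q\rangle$ and $\langle\alpha_s,Q'\rangle$, contradicting $\ell\subset\Q_n$. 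Your quotient-quadric argument is the polar-space transpose of this. Your central equivalence ($QQ'\subset\Q_n$ iff $\langle\Pi,\Pi'\rangle$ is totally singular iff $\overline{\Pi},\overline{\Pi'}$ are collinear on $\Q'$) carries the same content as the paper's $\F_2$/$\F_3$ dichotomy, with perpendicularity bookkeeping replacing Result~\ref{quadric-deg}; your sketch of it is sound, since $\Pi\cap\Pi'=\alpha_s$ forces $\dim\langle\Pi,\Pi'\rangle=s+2$, and total singularity of the span follows from $f(u+w)=f(u)+f(w)+b(u,w)=0$ for $u\in\Pi$, $w\in\Pi'$ once you have $\Pi\subseteq(\Pi')^\perp$. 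Likewise your second summand, $q^{s+1}$ times the valency $q\,|\Q''|$ of the point-graph of the quotient quadric $\Q'$, is the same number the paper obtains as $q\cdot|\F_2|$, because the $(s+2)$-spaces of $\Q_n$ through $\Sigma$ are exactly the lines of $\Q'$ through $\overline{\Pi}$; substituting the standard point counts for $\Q''$ reproduces all three stated formulas. What your route buys: one computation uniform in $q$ and in the three quadric types (the paper treats the elliptic case and declares the others similar), and it renders both the regularity and the degenerate case $s=g-1$ (where $\Q'$ has rank one, $\Q''$ is empty, and the second summand vanishes) transparent. What it costs: in the characteristic-two parabolic case $\Pr$ the bilinear form $b$ is degenerate with radical the nucleus $N$, so $\alpha_s^\perp$ must be read with care; since $N\notin\Q_n$ one still gets $\dim\alpha_s^\perp=n-s-1$ and a non-singular parabolic quotient in $\PG(n-2s-2,q)$, but a full write-up should say this explicitly, whereas the paper's intersection-theoretic argument avoids perps altogether.
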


\begin{proof} We prove this in the case $\Q_n$ is $\E=\Er$, which has projective index $g=r-1$ and point-graph denoted $\Gamma_\E$. The cases when $\Q_n$ is $\Hr$ and $\Pr$ are proved in a very similar manner.

Let $Q$ be a vertex in $\X_s$, 
we need to count the number of vertices in $\X_s$ that are adjacent to $Q$. Recall that $\X_s$ consists of vertices of type (ii), so 
in $\PG(2r+1,q)$, 
$Q$ is a point of the quadric $\E$, and the $(s+1)$-dimensional space $\Sigma=\langle Q,\alpha_s\rangle$ is contained in $\E$.
A vertex $Q'$ in $\X_s$ is adjacent to $Q$ if the line $QQ'$ is  contained in $\E$. We  partition the lines of $\E$ through $Q$ into three families: $\F_1$ contains the lines of $\E$ through $Q$ that lie in $\Sigma$; 
$\F_2$ contains the lines of $\E$ through $Q$ (not in $\F_1$) that lie in an $(s+2)$-dimensional subspace that contains $\Sigma$ and is contained in $\E$; and $\F_3$ contains the remaining lines of $\E$ through $Q$.

We first look at $\F_1$. 
The number of lines in $\F_1$ equals the number of lines through a point in an $(s+1)$-dimensional subspace, so by \cite[Theorem 3.1]{hirs98}, 
\begin{equation}\label{eqn-2}
|\F_1|=\frac{(q^{s+1}-1)}{(q-1)}.
\end{equation}
 Each of the lines in $\F_1$ contains the point $Q$ and meets $\alpha_s$ in one point. So each line in $\F_1$ gives rise to $q-1$ vertices in $\X_s$ which are adjacent to $Q$ in the graph $\Gamma^*$. 
In total,  $\F_1$ contributes $(q-1)\times |\F_1|=(q^{s+1}-1)$ neighbours of $Q$ in $\Gamma^*$.

Next we look at $\F_2$. Replacing `$s$' by  `$s+1$' in (\ref{eqn-1}) gives the number of subspace of dimension $s+2$ that contain the $(s+1)$-space $\Sigma=\langle Q,\alpha_s\rangle$  and are contained in $\E$ is 
$(q^{r-s-1}+1)(q^{r-s-2}-1)/(q-1)$.  Similarly, (\ref{eqn-2}) can be generalised to show that the number of lines through $Q$ that lie in a subspace of dimension $s+2$, and do not lie in  the $(s+1)$-space $\Sigma$ is $\Big( (q^{s+2}-1)/(q-1)\Big) -\Big( (q^{s+1}-1)/(q-1)\Big) =q^{s+1}$. Hence $$|\F_2|=q^{s+1}\times \frac{(q^{r-s-1}+1)(q^{r-s-2}-1)}{(q-1)}.$$ Each line in $\F_2$ contains one point of $\Sigma$, and the remaining $q$ points correspond to  $q$ vertices that lie in $\X_s$ (and are not considered in $\F_1$). That is, each line in $\F_2$ contributes $q$ neighbours to $Q$ in the graph $\Gamma^*$. So in total, $\F_2$ contributes $q\times|\F_2|=q^{s+2}(q^{r-s-1}+1)(q^{r-s-2}-1)/(q-1)$ neighbours to $Q$ in the graph $\Gamma^*$. 

Finally we look at $\F_3$. 
Let $\ell$ be a line in $\F_3$, so $\ell$ contains $Q$, but the $(s+2)$-space $\Pi=\langle \alpha_s,\ell\rangle$ is not contained in $\E$. Suppose that $\ell$ contains another point $Q'$ that corresponds to a vertex  in $\X_s$. Then $\Pi\cap\E$ contains the two distinct $(s+1)$-dimensional subspaces 
$\Sigma=\langle \alpha_s,Q\rangle$  and $\Sigma'=\langle \alpha_s,Q'\rangle$. As $\Pi$ is not contained in $\E$, $\Pi$ meets $\E$ in exactly the two $(s+1)$-spaces $\Sigma$ and $\Sigma'$. Thus $\ell=QQ'$ is not a line of $\E$, and so $\ell$ contains exactly two points $Q,Q'$ that are  vertices of $\X_s$, moreover  they are not adjacent in $\Gamma^*$. Thus $\F_3$ contributes 0  neighbours to $Q$ in the graph $\Gamma^*$.

Summing the neighbours of $Q$ in $\Gamma^*$ obtained from the families $\F_1,\F_2,\F_3$ gives the required result. Note that if $s=g-1$, so $s=r-2$, then $|\F_2|=0$, and the degree of $\Gamma^*$ is $q^{r-1}-1$.
\end{proof}

Now we look at Condition II of Result~\ref{GMK}. Note that throughout the proofs in this article, we consistently use $P,P'$ to denote points of type (i); $Q,Q'$ to denote points of type (ii); and $R,R'$ to denote points of type (iii).

\begin{lemma}\Label{lemma-ell-3} 
The partition  $\{\X_s,\Y_s\}$ satisfies Condition II of Result~\ref{GMK} if and only if $q=2$.
\end{lemma}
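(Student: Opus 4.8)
The plan is to verify Condition~II of Result~\ref{GMK} separately for the two kinds of vertices making up $\Y_s$, namely the type~(i) points $P\in\alpha_s$ and the type~(iii) points $R$, and then to read off the dependence on $q$ from the type~(iii) count. Throughout I will use the standard fact that, for two points $X,Y$ of the non-singular quadric $\Q_n$, the line $XY$ is contained in $\Q_n$ if and only if $Y$ lies in the polar (tangent) hyperplane $X^\perp$ of $X$; equivalently, the neighbours of $X$ in $\Gamma$ are exactly the points of $\Q_n\cap X^\perp$ other than $X$ (this follows from $f(\lambda X+\mu Y)=\lambda\mu\,B(X,Y)$, valid in every characteristic).

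The type~(i) vertices are immediate. If $P\in\alpha_s$ and $Q$ is any type~(ii) point, then $Q$ lies in an $(s+1)$-space $\Sigma=\langle\alpha_s,Q\rangle\subseteq\Q_n$, so the line $PQ\subseteq\Sigma\subseteq\Q_n$ and hence $P\sim Q$. Thus every type~(i) vertex is adjacent to all of $\X_s$, which is the ``$|\X_s|$'' alternative of Condition~II and holds for every $q$.

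The substance is the type~(iii) count. First I would observe that a point $R\in\Q_n\setminus\alpha_s$ is of type~(ii) exactly when $\langle\alpha_s,R\rangle$ is totally singular, i.e. exactly when $R\in\alpha_s^\perp$; hence a type~(iii) point $R$ satisfies $R\notin\alpha_s^\perp$, so $\alpha_s\cap R^\perp$ is a hyperplane of $\alpha_s$ of dimension $s-1$. Now fix one of the $x$ subspaces $\Sigma$ of dimension $s+1$ with $\alpha_s\subset\Sigma\subseteq\Q_n$ (where $x$ is the quantity of \eqref{eqn-1}), and recall that each type~(ii) point lies in exactly one such $\Sigma$, namely $\langle\alpha_s,Q\rangle$, so there is no double counting. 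Since $\alpha_s\subseteq\Sigma$ and $\alpha_s\not\subseteq R^\perp$, the hyperplane $R^\perp$ meets $\Sigma$ in a subspace $\Sigma\cap R^\perp$ of dimension exactly $s$ (a hyperplane of $\Sigma$) containing the $(s-1)$-space $\alpha_s\cap R^\perp$. The type~(ii) neighbours of $R$ inside $\Sigma$ are precisely the points of $(\Sigma\cap R^\perp)\setminus\alpha_s$, of which there are $\big((q^{s+1}-1)-(q^{s}-1)\big)/(q-1)=q^{s}$. Crucially this count is independent of $\Sigma$, so summing over all $x$ choices shows $R$ has exactly $x\,q^{s}$ neighbours in $\X_s$; since $|\X_s|=x\,q^{s+1}$ by Lemma~\ref{lemma-ell-1}, this equals $|\X_s|/q$.

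Finally I would conclude. A type~(iii) vertex is adjacent to exactly $|\X_s|/q$ vertices of $\X_s$; as $|\X_s|>0$ in the stated range of $s$, and type~(iii) points exist (they are exactly the points of $\Q_n$ outside the proper subspace $\alpha_s^\perp$, and $\Q_n\not\subseteq\alpha_s^\perp$), the value $|\X_s|/q$ lies in $\{0,\tfrac12|\X_s|,|\X_s|\}$ if and only if $q=2$. Combined with the type~(i) computation, Condition~II holds for every vertex of $\Y_s$ precisely when $q=2$. The main obstacle is the third step: the key point is that $|(\Sigma\cap R^\perp)\setminus\alpha_s|=q^{s}$ comes out the same for every $\Sigma$, which hinges on adjacency being governed by the single hyperplane $R^\perp$ and on $R^\perp$ cutting every relevant $\Sigma$ in a hyperplane containing the fixed $(s-1)$-space $\alpha_s\cap R^\perp$. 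Establishing this uniformity, together with the disjointness of the $\Sigma$'s across type~(ii) points, is what makes the total collapse cleanly to $|\X_s|/q$.
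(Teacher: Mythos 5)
Your proof is correct. Its counting skeleton coincides with the paper's — type (i) vertices are adjacent to all of $\X_s$; the neighbours in $\X_s$ of a type (iii) vertex $R$ are counted per $(s+1)$-space $\Sigma$ with $\alpha_s\subset\Sigma\subset\Q_n$, using that each type (ii) point lies in a unique such $\Sigma$, giving $xq^{s}=|\X_s|/q$ against $|\X_s|=xq^{s+1}$ — but your mechanism for the per-$\Sigma$ count of $q^s$ is genuinely different. The paper argues synthetically: it forms the $(s+2)$-space $\langle\Sigma,R\rangle$, invokes Result~\ref{quadric-deg} to see it meets $\Q_n$ in a pair of $(s+1)$-spaces $\Sigma,\Sigma'$, and identifies the relevant neighbours with $(\Sigma\cap\Sigma')\setminus\alpha_s$. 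You instead use the associated bilinear form: $Q\sim R$ iff $Q\in R^\perp$, so the neighbours of $R$ in $\Sigma$ are exactly $(\Sigma\cap R^\perp)\setminus\alpha_s$; indeed $\Sigma\cap\Sigma'=\Sigma\cap R^\perp$, so the two descriptions agree. Your route buys three things: the ``exactly $q^s$ and no more'' step is automatic (in the paper, that every neighbour of $R$ lying in $\Sigma$ must lie in $\Sigma'$ is left largely implicit); the characterization of type (ii) versus type (iii) as $R\in\alpha_s^\perp$ versus $R\notin\alpha_s^\perp$ makes the uniformity of the count over all $\Sigma$, and over all three quadric types, transparent, yielding the clean statement that every type (iii) vertex has exactly $|\X_s|/q$ neighbours in $\X_s$, from which $q=2$ is immediately forced; and you explicitly verify that type (iii) vertices exist (via $\Q_n\not\subseteq\alpha_s^\perp$), a point the ``only if'' direction needs and the paper does not address. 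Two small remarks for completeness: in characteristic $2$ the parabolic quadric has a nucleus lying in the radical of the bilinear form, so one should note that $X^\perp$ is still a hyperplane for $X\in\Q_n$ (the nucleus is off the quadric) — your setup accommodates this since you only take perps of singular points; and your $x$ is quoted from (\ref{eqn-1}), which is computed for the elliptic case, so for $\Hr$ and $\Pr$ one should take $x$ to be the analogous count of $(s+1)$-spaces through $\alpha_s$, with the identity $|\X_s|=xq^{s+1}$ from the proof of Lemma~\ref{lemma-ell-1} holding in all cases.
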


\begin{proof}
We prove this in the case $\Q_n$ is $\E=\Er$, which has projective index $g=r-1$ and point-graph denoted $\Gamma_\E$. The cases when $\Q_n$ is $\Hr$ and $\Pr$ are proved in a very similar manner. 

We need to show that in the graph $\Gamma_\E$, each vertex in $\Y_s$ is adjacent to $0$, $\frac12|\X_s|$ or $|\X_s|$ vertices in $\X_s$.
There are two cases to consider since the vertices in $\Y_s$ are of type (i) or (iii). 
First consider a vertex $P$ in $\Y_s$ of type (i). Let $Q\in\X_s$, so $Q$ is a vertex of type (ii). Hence in $\PG(2r+1,q)$, $P\in\alpha_s$ and $Q$ lies in an $(s+1)$-space $\Pi$ with $\alpha_s\subset\Pi\subset\E$. Hence $PQ$ is a line of $\E$, and so $P$ and $Q$ are adjacent vertices in $\Gamma_\E$. That is, each vertex of type (i) in $\Y_s$ is adjacent to each of the $|\X_s|$ vertices in $\X_s$.

 Now consider a vertex $R$ in $\Y_s$ of type (iii). We count the number of vertices $Q$ in $\X_s$ for which $RQ$ is a line of $\E$. We will show that this number is not 0 or $|\X_s|$, and further, is $\frac12|\X_s|$ if and only if $q=2$. Let $\Sigma$ be a subspace of $\E$ of dimension $s+1$ that contains $\alpha_s$. So $\Sigma\setminus\alpha_s$ consists of points of type (ii), hence $R\notin\Sigma$. Consider the $(s+2)$-space $\Pi=\langle \Sigma,R\rangle$. As $\alpha_s\subset\Sigma$, we have $\langle\alpha_s,R\rangle\subset\Pi$. 
 As $R$ is of type (iii), $\langle\alpha_s,R\rangle$ is not contained in $\E$. Hence $\Pi$ is not contained in $\E$. 
 So $\Pi\cap\E$ contains the $(s+1)$-space $\Sigma$ and the point $R\notin\Sigma$. Hence by Result~\ref{quadric-deg},  
 $\Pi\cap\E$ is two distinct $(s+1)$-spaces. That is, $\Pi\cap\E=\{\Sigma,\Sigma'\}$ where $\Sigma'$ is an $(s+1)$-space that contains $R$. As  $R$ is type (iii),  $\Sigma'$ does not contain $\alpha_s$. 
 Hence $\Sigma\cap\Sigma'$ is an $s$-space distinct from $\alpha_s$, and so $\Sigma\cap\Sigma'\cap\alpha_s$ is a space of dimension $s-1$.
 Let $Q$ be a point in $\Sigma'\cap\Sigma$, $Q\notin\alpha_s$, so $Q$ has type (ii). As $Q,R\in\Sigma'\subset\E$, the line $m=QR$  is a line of $\E$.  
 
 Suppose the line $m=QR$ contains a second point $Q'$ of type (ii). So $\langle \alpha_s,Q'\rangle$ is an $(s+1)$-space contained in $\E$. Thus $\Pi$ contains three distinct $(s+1)$-spaces of $\E$, namely $\Sigma,\Sigma',\langle \alpha_s,Q'\rangle$, contradicting Result~\ref{quadric-deg}. Thus $m$
 contains exactly one point of type (ii), namely $Q$, and the rest of the points on $m$ are type (iii). Hence in the graph $\Gamma_\E$, the line $m$ gives rise to one neighbour of $R$ that lies in $\X_s$, namely $Q$. Thus each point of $\Sigma'\cap\Sigma$ not in $\alpha_s$ 
 gives rise to exactly one vertex in $\X_s$ that is a neighbour of $R$. 
 This is true for every $(s+1)$-space $\Sigma$ with $\alpha_s\subset\Sigma\subset\E$. Moreover, each neighbour  of $R$ in $\X_s$ corresponds to a point of $\E$ that lies in exactly one such $(s+1)$-space, so arises   exactly once in this way. 
 Hence the number of neighbours of $R$ that lie in $\X_s$ equals the number of points of $\E\setminus\alpha_s$ that lie in an $(s+1)$-space $\Sigma$ with $\alpha_s\subset\Sigma\subset\E$.
 We count these points.
 
 Firstly, the number of  $(s+1)$-dimensional spaces that contain $\alpha_s$ and are contained in $\E$ is given in  (\ref{eqn-1}). 
 Secondly, let $\Sigma$ be an $(s+1)$-space containing $\alpha_s$, and $\Sigma'$  an $(s+1)$-space that meets $\Sigma$ in an $s$-space not containing $\alpha_s$. Then the number of points in $\Sigma\cap\Sigma'$ which are not in $\alpha_s$ is $\big((q^{s+1}-1)/(q-1)\big)- \big((q^{s}-1)/(q-1)\big)
 =q^{s}$. Hence in the graph $\Gamma_\E$, there are 
\[
y=\frac{ q^{s}\ (q^{r-s}+1)(q^{r-s-1}-1)}{(q-1)}
\] vertices in $\X_s$ that are neighbours of $R$. 
To satisfy Condition II of Result~\ref{GMK}, we  need $y\in\{ 0, \ \frac12|\X_s|,\ |\X_s|\}$. 
Now $y=0$ if and only if $r-s-1=0$, which does not occur as $s<g=r-1$. Further, $|\X_s|$ is calculated in Lemma~\ref{lemma-ell-1}, and  $y<|\X_s|$. 
Using Lemma~\ref{lemma-ell-1}, 
 $y=|\X_s|/2$ if and only if $q=2$.
 
 Thus the vertices in $\Y_s$ of type (i) are adjacent to $|\X_s|$ of the vertices in $\X_s$. Further,  the vertices in $\Y_s$ of type (iii) are not adjacent to 0 or all the vertices of $\X_s$,  and are adjacent to  $\frac12|\X_s|$ of the vertices in $\X_s$ if and only if $q=2$. That is, Condition II of Result~\ref{GMK} is satisfied in for the partition $\{\X_s,\Y_s\}$ of $\Gamma_\E$ if and only if $q=2$.  
\end{proof}

It is now straightforward to prove Theorem~\ref{main-thm}.

{\bf Proof of Theorem~\ref{main-thm}}
Let $\Q_n$ be a non-singular quadric of $\PG(n,2)$ with projective index $g$.  Let $s$ be an integer with  $0\le s <g$,  let $\alpha_s$ be a $s$-space contained in $\Q_n$, and let  $\{\X_s,\Y_s\}$ be the partition given in Definition~\ref{def-partition}. By Lemmas~\ref{lemma-ell-2} and \ref{lemma-ell-3},  the partition $\{\X_s,\Y_s\}$ satisfies Conditions I and II of Result~\ref{GMK}(1). Hence we can use Result~\ref{GMK}(2) to construct a graph $\Gamma_s$. Note that as the group fixing $\Q_n$ is transitive on the $s$-spaces of $\Q_n$, $0\leq s\leq g$, different choices of the subspace $\alpha_s$ give rise to the same (up to isomorphism) graph.
So for any  $s$, $0\leq s<g$, the graph  $\Gamma_{s}$  is a strongly regular graph with the same parameters as $\Gamma$. \hfill $\square$

\begin{remark}
{\rm   As $0\leq s<g$, we have $g\geq1$. This places a bound on $n$:   when $\Q_n$ is a hyperbolic quadric,  we need $n\ge 3$; when $\Q_n$ is a parabolic quadric, we need $n\ge 4$; and when $\Q_n$ is an elliptic quadric, we need $n\ge 5$.
}\end{remark}

It is useful to note that the proof of Lemma~\ref{lemma-ell-3} gives a description of  the edges in the graph $\Gamma_s$. That is, let  $P,P'$ be vertices of type (i), $Q,Q'$ 
 vertices of type (ii), and
$R,R'$ 
 vertices of type (iii). Then $\{P,P'\}$, $\{P,Q\}$, $\{P,R\}$, $\{Q,Q'\}$, $\{R,R'\}$ are edges of $\Gamma_s$ if $PP'$, $PQ$, $PR$, $QQ'$, $RR'$ are lines of $\Q_n$ respectively; and $\{Q,R\}$ is an edge of $\Gamma_s$ if $QR$ is a 2-secant of $\Q_n$. In summary, we have:

\begin{corollary}\Label{cor-edges} Let $\Gamma_s$, $0\leq s<g$  be the graph constructed in Theorem~\ref{main-thm}. The adjacencies in $\Gamma_s$ are the same as those given in Table~\ref{table-edges}.  
\end{corollary}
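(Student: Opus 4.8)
The plan is to observe that Corollary~\ref{cor-edges} is essentially a restatement of the edge-description already extracted during the proof of Lemma~\ref{lemma-ell-3}, now asserted uniformly for every type-pair and specialised to $q=2$. So rather than a fresh argument, I would organise the proof as a systematic case analysis over the six possible unordered vertex-pairs $\{P,P'\}$, $\{P,Q\}$, $\{P,R\}$, $\{Q,Q'\}$, $\{R,R'\}$, $\{Q,R\}$, and in each case compare the adjacency rule in $\Gamma_s$ (coming from the Godsil-McKay construction of Result~\ref{GMK}(2)) against the entry in Table~\ref{table-edges}. The governing principle is that $\Gamma_s$ agrees with the point-graph $\Gamma$ on all edges \emph{except} those incident with a type-(iii) vertex $R$ that has exactly $\frac12|\X_s|$ neighbours in $\X_s$; for such an $R$ the construction deletes its edges into $\X_s$ and replaces them with edges to the complementary half of $\X_s$.

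First I would dispose of the five pairs not of the form $\{Q,R\}$. For these, at least one endpoint is of type (i) or type (iii), but the modification in Result~\ref{GMK}(2) only ever \emph{rewires} edges between a type-(iii) vertex $R$ and vertices of $\X_s$ (the type-(ii) vertices). A pair $\{P,P'\}$, $\{P,Q\}$, $\{P,R\}$, $\{Q,Q'\}$ or $\{R,R'\}$ never has both endpoints split by this rewiring in a way that changes the rule ``adjacent iff the join is a line of $\Q_n$'': pairs avoiding type (ii) entirely, and pairs with a type-(i) endpoint, retain their $\Gamma$-adjacency verbatim, matching Table~\ref{table-edges}. Indeed the proof of Lemma~\ref{lemma-ell-3} already shows every type-(i) vertex $P$ is joined in $\Gamma$ to \emph{all} of $\X_s$, so $P$ is unaffected; and the line-criterion for $\{P,P'\}$, $\{P,R\}$, $\{R,R'\}$ is inherited unchanged. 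Thus for these five rows the Table entry is correct by direct inheritance from $\Gamma$.

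The substantive row is $\{Q,R\}$, and here I would lean directly on the geometry established inside Lemma~\ref{lemma-ell-3}. There it is shown that for fixed $R$ of type (iii) and each $(s+1)$-space $\Sigma$ through $\alpha_s$, the plane $\Pi=\langle\Sigma,R\rangle$ meets $\Q_n$ in exactly two generators $\Sigma,\Sigma'$, that $m=QR$ is a line of $\Q_n$ precisely when $Q\in\Sigma\cap\Sigma'$, and that each such line carries exactly one type-(ii) point. Hence in $\Gamma$ the vertex $R$ is joined to exactly those $Q\in\X_s$ with $QR$ a line of $\Q_n$, and these number $\frac12|\X_s|$ when $q=2$. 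The Godsil-McKay step then \emph{swaps} these edges for the complementary half of $\X_s$; I must therefore verify that for the swapped-in pairs $QR$ is a \emph{2-secant} of $\Q_n$ (not a line), which is exactly what Table~\ref{table-edges} records. This follows because $Q\in\X_s\setminus(\Sigma\cap\Sigma')$ lies in an $(s+1)$-generator $\langle\alpha_s,Q\rangle$ other than $\Sigma,\Sigma'$; if $QR$ were a line of $\Q_n$ it would meet $\Sigma\cap\Sigma'$, forcing a third generator in $\Pi$ and contradicting Result~\ref{quadric-deg}, so $QR$ is a 2-secant.

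I expect the only delicate point to be the bookkeeping that the Godsil-McKay rewiring for a single $R$ is consistent across \emph{all} the $(s+1)$-spaces $\Sigma$ simultaneously — i.e.\ that summing the local ``one neighbour per $\Sigma\cap\Sigma'$'' counts genuinely yields the global figure $\frac12|\X_s|$ with no double-counting, which is precisely the multiplicity claim already proved in Lemma~\ref{lemma-ell-3} (each $\X_s$-neighbour of $R$ lies in exactly one such $\Sigma$). Granting that, the six-row comparison closes and $\Gamma_s$ has exactly the adjacencies of Table~\ref{table-edges}.
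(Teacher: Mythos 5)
Your proposal takes essentially the same route as the paper, which justifies the corollary in exactly this way: the Godsil--McKay rewiring of Result~\ref{GMK}(2) touches only edges between $\Y_s$-vertices with $\frac12|\X_s|$ neighbours and $\X_s$, and since Lemma~\ref{lemma-ell-3} shows type (i) vertices are adjacent to all of $\X_s$ while type (iii) vertices have exactly half, rows 1--5 of Table~\ref{table-edges} are inherited from $\Gamma$ and the swapped-in edges at a type (iii) vertex $R$ are precisely the pairs $\{Q,R\}$ with $QR$ not a line of $\Q_n$, hence $2$-secants. Your extra detour through Result~\ref{quadric-deg} to certify the $2$-secant claim is harmless but unnecessary, since in $\PG(n,2)$ a line through two points of $\Q_n$ is either contained in $\Q_n$ or is a $2$-secant.
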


%

\begin{remark}{\rm  We note that if $q\neq 2$, then geometric techniques similar to those used here show that the graph $\Gamma_s$ with $s>0$ is {\em not} regular. }
\end{remark}

\section{Maximal cliques of $\Gamma_s$}\Label{sec-cliques}

In Section~\ref{sec:max-descr},  we classify the maximal cliques in the  graph $\Gamma_s$, and in Section~\ref{sec:max-count},  we count them.

\subsection{Description of Maximal Cliques of $\Gamma_s$}\Label{sec:max-descr}

Throughout this section, let $\Q_n$ be a non-singular quadric of $\PG(n,2)$ of projective index $g$ with point-graph $\Gamma$. For $s$ an integer with $0\leq s<g$, let $\alpha_s$ be an $s$-space of $\Q_n$. Let  $\Gamma_s$ be the graph described in Theorem~\ref{main-thm}.

We first describe the maximal cliques of the   point-graph $\Gamma$ of $\Q_n$. 
The largest subspaces contained in $\Q_n$ are the generators, which have dimension $g$, and so contain
 $2^{g+1}-1$ points.
Further, any subspace of $\Q_n$ is contained in a generator of $\Q_n$. Hence the maximal cliques of $\Gamma$ have $2^{g+1}-1$ vertices and
correspond to generators of  $\Q_n$.
%
%

We want to study 
maximal cliques in $\Gamma_s$,  we begin by studying cliques of  $\Gamma_s$ of size $2^{g+1}-1$, then show that these are maximal. 
We define a {\em $g$-clique} of $\Gamma_s$ to be a clique of size $2^{g+1}-1$.
The next lemma describes two types of $g$-cliques of $\Gamma_s$, we show later that these are the maximal cliques of $\Gamma_s$. The first type 
 corresponds to generators  of $\Q_n$ containing $\alpha_s$, and so corresponds to maximal cliques of the original graph $\Gamma$.
Figure~\ref{fig-cliques} illustrates the two types of $g$-cliques described in
  Lemma~\ref{g-cliq-1}.

\begin{figure}[h]
\centering
\input{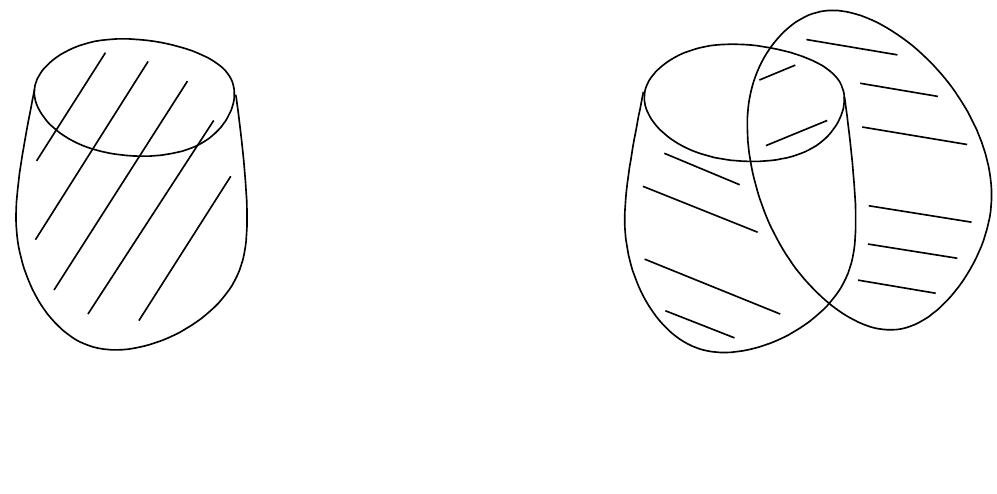_t}
\caption{$g$-cliques of $\Gamma_s$}\label{fig-cliques}
\end{figure}

\begin{lemma}\Label{g-cliq-1} 
Let $\Gamma_s$, $0\le s < g$, be the graph constructed as in Theorem~\ref{main-thm}.
\begin{itemize}
\item[A.]
 Let $\Sigma$ be a generator of $\Q_n$ that contains $\alpha_s$, then the points of $\Sigma$ form a $g$-clique of $\Gamma_s$.
 \item[B.]  Let $\Pi,\Sigma$ be two generators of $\Q_n$ such that: $\Sigma$ contains $\alpha_s$; $\Pi$ does not contain $\alpha_s$; and $\Pi$, $\Sigma$ meet in a $(g-1)$-dimensional space. Let $\C_a$ be the $2^s-1$ points of $\alpha_s\cap\Pi$; $\C_b$ be the $2^g-2^s$ points of $\Sigma$ that are not in $\alpha_s$ or $\Pi$; and $\C_c$ be the $2^g$ points of $\Pi\setminus\Sigma$, see Figure~\ref{fig-cliques}. Then the points in $\C_a\cup\C_b\cup\C_c$ form a $g$-clique of the graph $\Gamma_s$.
 \end{itemize}
\end{lemma}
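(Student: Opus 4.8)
The plan is to read off the type of every vertex in the proposed clique from Definition~\ref{def-partition}, and then to check each pair is an edge of $\Gamma_s$ using the adjacency rules recorded in Corollary~\ref{cor-edges}. Part A is immediate and Part B carries the real content. For Part A, note that since $\Sigma$ is a generator containing $\alpha_s$, each point $Q\in\Sigma\setminus\alpha_s$ satisfies $\langle\alpha_s,Q\rangle\subseteq\Sigma\subseteq\Q_n$ and so is of type (ii), while the points of $\alpha_s$ are of type (i); in particular $\Sigma$ contains no vertex of type (iii). Every pair of points of $\Sigma$ is joined by a line of $\Sigma\subseteq\Q_n$, hence by Corollary~\ref{cor-edges} every such pair (of types (i)/(i), (i)/(ii) or (ii)/(ii)) is an edge; the ``flipped'' rule for type (ii)/type (iii) pairs is never triggered because there are no type (iii) vertices. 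As $|\Sigma|=2^{g+1}-1$, this is a $g$-clique.

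For Part B I would first determine the types. The points of $\C_a=\alpha_s\cap\Pi\subseteq\alpha_s$ are type (i), and the points of $\C_b\subseteq\Sigma\setminus\alpha_s$ are type (ii) exactly as in Part A. The key step is $\C_c$: for $X\in\Pi\setminus\Sigma$ the generator $\Pi$ is the unique generator through $X$ meeting $\Sigma$ in a $(g-1)$-space, so Result~\ref{property-star} shows that the points of $\Sigma$ on a line of $\Q_n$ through $X$ are exactly those of $\Sigma\cap\Pi$. Intersecting with $\alpha_s\subseteq\Sigma$ gives $\alpha_s\cap\Pi=\C_a$, an $(s-1)$-space; thus $X$ is collinear in $\Q_n$ with only part of $\alpha_s$, so $\langle\alpha_s,X\rangle\not\subseteq\Q_n$ and $X$ is of type (iii).

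It then remains to verify all six kinds of pairs are edges. Five are routine: any two points of $\C_a\cup\C_b$ lie in the generator $\Sigma$ and any two of $\C_a\cup\C_c$ lie in the generator $\Pi$, so these are joined by lines of $\Q_n$, and by Corollary~\ref{cor-edges} each of the type combinations (i)/(i), (i)/(ii), (ii)/(ii), (i)/(iii), (iii)/(iii) gives an edge when the join is a line of $\Q_n$. The only delicate pairs are $\{Q,X\}$ with $Q\in\C_b$ (type (ii)) and $X\in\C_c$ (type (iii)), which are edges precisely when $QX$ is a $2$-secant. Here Result~\ref{property-star} again applies: since $Q\in\Sigma$ but $Q\notin\Pi$, the point $Q$ is not on a line of $\Q_n$ through $X$, so $QX$ is not a line of $\Q_n$; as $Q,X\in\Q_n$ and a line of $\PG(n,2)$ has only three points, the third point lies off $\Q_n$ and $QX$ is a $2$-secant, hence an edge. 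Finally the three sets are pairwise disjoint with $|\C_a|+|\C_b|+|\C_c|=(2^s-1)+(2^g-2^s)+2^g=2^{g+1}-1$, so $\C_a\cup\C_b\cup\C_c$ is a $g$-clique.

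The main obstacle, and the reason Part B is not merely a transcription of the point-graph argument, is the two places where $\Gamma_s$ differs from $\Gamma$: confirming that the $\C_c$-points are genuinely type (iii), and that the cross pairs $\C_b$--$\C_c$ are $2$-secants rather than lines of $\Q_n$. Both reduce cleanly to Result~\ref{property-star}, which pins down $\Sigma\cap\Pi$ as exactly the set of points of $\Sigma$ collinear with $X$; it is the characteristic-two geometry (three points to a line) that turns ``not a line of $\Q_n$'' into ``$2$-secant'' in the last step.
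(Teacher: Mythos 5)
Your proof is correct, and while the overall scaffolding (determine the type of every vertex, then check each pair against Table~\ref{table-edges} via Corollary~\ref{cor-edges}, then count) matches the paper, the two critical verifications in Part B are done with a different key lemma. The paper's engine is Result~\ref{quadric-deg} applied to the span: since $\Pi\cap\Sigma$ is a $(g-1)$-space, $\langle\Pi,\Sigma\rangle$ is a $(g+1)$-space meeting $\Q_n$ in exactly $\Pi\cup\Sigma$, and this one fact yields both claims --- a hypothetical type~(ii) point of $\C_c$ would force $\langle\alpha_s,Q\rangle$ (which lies in neither generator, hence, as no subspace is a union of two proper subspaces, has a point outside both) to put a point of $\Q_n$ in the span off $\Pi\cup\Sigma$, a contradiction; and a line $QR$ with $Q\in\Sigma\setminus\Pi$, $R\in\Pi\setminus\Sigma$ cannot lie in $\Pi\cup\Sigma$, so is a $2$-secant. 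You instead run both steps through Result~\ref{property-star}: its uniqueness clause identifies $\Pi$ itself as the generator through $X\in\Pi\setminus\Sigma$ meeting $\Sigma$ in a hyperplane, so the points of $\Sigma$ collinear with $X$ in $\Q_n$ are exactly $\Sigma\cap\Pi$; restricting to $\alpha_s$ gives the type~(iii) determination directly (your ``collinear with only part of $\alpha_s$'' degenerates gracefully to the empty set when $s=0$, where $\C_a=\emptyset$), and taking $Q\in\C_b\subseteq\Sigma\setminus\Pi$ gives the $2$-secant property, with the characteristic-two point-count closing the gap from ``not a line of $\Q_n$'' to ``$2$-secant'' (in fact this last step needs no appeal to $q=2$, since a line meets a quadric in at most two points unless contained in it). What each buys: the paper's span argument is a single global fact that it reuses almost verbatim in the classification proof (Lemma~\ref{classify-cliq}), so it earns its keep later; your polar-space collinearity argument is more local, avoids the implicit union-of-subspaces step, and arguably makes it more transparent \emph{why} $\C_c$ consists of type~(iii) points rather than deriving it by contradiction. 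Both are complete; no gaps.
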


\begin{proof} 
For part A, let $\Sigma$ be a generator of $\Q_n$ that contains $\alpha_s$. 
Let $\C$ be the set of vertices of $\Gamma_s$ that correspond to the points of $\Sigma$. As  $\C$ consists of vertices of type (i) and (ii) only, two vertices of $\C$ are adjacent if the corresponding two points lie on a line of $\Q_n$. As $\Sigma$ is contained in $\Q_n$, every pair of distinct points in $\Sigma$ lie in a line of $\Q_n$. Hence every pair of distinct vertices in $\C$ are adjacent, so $\C$ is a clique. Further, $\Sigma$ contains $2^{g+1}-1$ points, so $|\C|=2^{g+1}-1$. Thus $\C$ is a $g$-clique of $\Gamma_s$. 

We now consider the set $\C_a\cup\C_b\cup\C_c$ described in part B.  
By construction, the three sets $\C_a, \C_b,\C_c$ are pairwise disjoint, $\C_a$ consists of points  of type (i),  
$\C_b$ consists of points of type (ii), and $\C_c$ contains no points of type (i). Suppose $\C_c$ contained a point $Q$ of type (ii), so $\langle \alpha_s,Q\rangle$ is an $(s+1)$-space of $\Q_n$. By construction, $\langle \alpha_s,Q\rangle$  is not contained in $\Pi$ or $\Sigma$, so contains a point $X$ not in $\Sigma$ or $\Pi$. So the $(g+1)$-space $\langle \Pi,\Sigma\rangle$ meets $\Q_n$ in at least $\Pi,\Sigma,X$, contradicting Result~\ref{quadric-deg}. Hence $\C_c$ consists of points of type (iii). Note that straightforward counting shows that the number of points in $\C_a, \C_b,\C_c$ is as stated in the theorem, and $|\C_a\cup\C_b\cup\C_c|=2^{g+1}-1$.

We need to show that any pair of vertices in the set corresponding to $\C_a\cup\C_b\cup\C_c$ are adjacent. Recall Corollary~\ref{cor-edges} shows that the adjacencies in $\Gamma_s$ are as described in Table~\ref{table-edges}. 
Let
 $P,P'\in\C_a$, $Q,Q'\in\C_b$, $R,R'\in\C_c$ be distinct points. (Note that the argument below is easily adjusted to work if $\C_a$ or $\C_b$ has size 1.) 
 As $P,P'$ have type (i), $Q,Q'$ have type (ii) and $R,R'$ have type (iii), the following pairs of points lie in a subspace of $\Q_n$, and so lie on a line of $\Q_n$:
 $P,P'\in\alpha_s\subset\Q_n$, 
 $Q,Q'\in\Sigma\subset\Q_n$, 
 $P,Q\in\Sigma\subset\Q_n$, 
 $P,R\in\Pi\subset\Q_n$,
 $R,R'\in\Pi\subset\Q_n$.
 Hence the corresponding pairs of vertices are all adjacent in $\Gamma_s$. 
 
 To complete the proof that 
 $\C_a\cup\C_b\cup\C_c$ corresponds to a $g$-clique of $\Gamma_s$, we need 
to show that $Q,R$ are adjacent in $\Gamma_s$, so by Table~\ref{table-edges}, we need to show that $QR$ is a 2-secant of $\Q_n$.
The line $QR$ lies in  the $(g+1)$-space $\langle\Pi,\Sigma\rangle$, which meets $\Q_n$ in exactly $\Pi$ and $\Sigma$. As $Q\in\Sigma\setminus\Pi$ and $R\in\Pi\setminus\Sigma$,  the line $QR$ is not contained in $\Q_n$, so it is a 2-secant of $\Q_n$. Hence $QR$ is an edge of $\Gamma_s$. That is, $\C_a\cup\C_b\cup\C_c$ is a set of $2^{g+1}-1$ vertices of $\Gamma_s$ such that any two vertices are adjacent, and so it is a $g$-clique of $\Gamma_s$. 
\end{proof}

We will show that  the only maximal cliques in $\Gamma_s$ are the $g$-cliques of Class A and B. We need some preliminary lemmas. 
Note that the $g$-cliques of Class A contain no points of type (iii), we begin by showing that  the converse also holds.

\begin{lemma}\Label{conv-g-cliq-1} Let $\C$ be a $g$-clique of $\Gamma_s$, $0\le s < g$, that contains no vertices of type \tthree, then $\C$ is a $g$-clique of Class A.
\end{lemma}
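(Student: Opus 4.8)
The plan is to transfer the problem back to the original point-graph $\Gamma$ and then invoke the known description of its maximal cliques. First I would observe, using Corollary~\ref{cor-edges}, that the adjacencies of $\Gamma_s$ and of $\Gamma$ differ only on pairs consisting of a type \ttwo\ and a type \tthree\ vertex: in every other row of Table~\ref{table-edges} a pair is an edge of $\Gamma_s$ exactly when the two points lie on a line of $\Q_n$, which is precisely the adjacency in $\Gamma$. Since $\C$ contains no vertex of type \tthree, the subgraph of $\Gamma_s$ induced on $\C$ coincides with the subgraph of $\Gamma$ induced on $\C$. Hence $\C$ is also a clique of $\Gamma$; equivalently, any two points of $\C$ lie on a line of $\Q_n$.

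Next I would use the fact recalled at the start of Section~\ref{sec:max-descr}: the maximal cliques of $\Gamma$ have exactly $2^{g+1}-1$ vertices and are precisely the point sets of the generators of $\Q_n$. As $\C$ is a clique of $\Gamma$ with $|\C|=2^{g+1}-1$ vertices, it cannot be properly contained in any larger clique, so it is maximal; therefore $\C$ is the set of points of a generator $\Sigma$ of $\Q_n$. It then remains only to show $\alpha_s\subseteq\Sigma$, for in that case $\Sigma$ is a generator through $\alpha_s$ and $\C$ is a $g$-clique of Class A, as defined in Lemma~\ref{g-cliq-1}.

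For this last step I would argue by contradiction: suppose $\alpha_s\not\subseteq\Sigma$ and choose a point $P\in\alpha_s\setminus\Sigma$. I claim every point $Q$ of $\Sigma$ lies on a line of $\Q_n$ through $P$. Indeed, since $\C$ has no type \tthree\ vertex, each $Q\in\Sigma$ has type \tone\ or \ttwo: if $Q$ has type \tone\ then $Q\in\alpha_s$ and $PQ\subseteq\alpha_s\subseteq\Q_n$; if $Q$ has type \ttwo\ then $\langle\alpha_s,Q\rangle$ is a subspace of $\Q_n$ containing both $P$ and $Q$, so again $PQ$ is a line of $\Q_n$. Thus all $2^{g+1}-1$ points of $\Sigma$ are collinear with $P$ on lines of $\Q_n$. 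On the other hand, $P$ is a point of $\Q_n$ not lying on the generator $\Sigma$, so by Result~\ref{property-star} the points of $\Sigma$ on a line of $\Q_n$ through $P$ are exactly those of $\Sigma\cap\Pi$, where $\Pi$ is the unique generator through $P$ meeting $\Sigma$ in a $(g-1)$-space. But $\Sigma\cap\Pi$ has only $2^{g}-1$ points, strictly fewer than the $2^{g+1}-1$ points of $\Sigma$, a contradiction. Hence $\alpha_s\subseteq\Sigma$ and $\C$ is a $g$-clique of Class A.

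I expect the main obstacle to be precisely this final step. The subtlety is that the chosen point $P$ need not itself belong to $\C$, so one must deduce $P\in\Sigma$ purely from the type constraints on the points of $\Sigma$; Result~\ref{property-star} is the tool that turns the statement ``$P$ is collinear with all of $\Sigma$'' into a dimension count ($g$ versus $g-1$) that is impossible unless $P\in\Sigma$. The first two steps, by contrast, are essentially bookkeeping, once one notices that removing type \tthree\ vertices makes $\Gamma_s$ and $\Gamma$ agree on $\C$.
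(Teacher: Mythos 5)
Your proof is correct, but it is organised differently from the paper's. Both proofs open the same way, observing via Corollary~\ref{cor-edges} that a clique with no type \tthree\ vertices has all its edges given by lines of $\Q_n$, so that $\C$ is a clique of $\Gamma$. From there the paper does not invoke the classification of maximal cliques of $\Gamma$: it argues extremally, choosing a generator $\Sigma$ meeting $\C$ in the maximum number of points and, assuming $\C\not\subseteq\Sigma$, picking $A\in\C\setminus\Sigma$; Result~\ref{property-star} then forces $\C\cap\Sigma\subseteq\Sigma\cap\Pi$ for the unique generator $\Pi$ through $A$ meeting $\Sigma$ in a $(g-1)$-space, whence $|\Pi\cap\C|\geq|\Sigma\cap\C|+1$, contradicting the choice of $\Sigma$; finally $|\C|=2^{g+1}-1$ pins $\C$ down as the full point set of a generator. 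You instead quote the fact recorded at the start of Section~\ref{sec:max-descr} that the maximal cliques of $\Gamma$ are exactly the generators, which makes this half of the argument immediate; the paper's version buys self-containedness, re-deriving the containment in a generator from Result~\ref{property-star} alone. More notably, your final step --- showing $\alpha_s\subseteq\Sigma$ by observing that any $P\in\alpha_s\setminus\Sigma$ would be collinear in $\Q_n$ with all $2^{g+1}-1$ points of $\Sigma$ (type \tone\ points via $\alpha_s$, type \ttwo\ points via $\langle\alpha_s,Q\rangle$), whereas Result~\ref{property-star} caps the points of $\Sigma$ collinear with $P$ at the $2^g-1$ points of a $(g-1)$-space --- is a step the paper's proof leaves implicit: it concludes that $\C$ is of Class A directly from $\C$ being a generator, even though Class A in Lemma~\ref{g-cliq-1} requires the generator to contain $\alpha_s$. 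So your route is not only valid but makes explicit a small point the paper glosses over; both arguments ultimately rest on the same tool, Result~\ref{property-star}, applied at different stages.
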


\begin{proof}
Let $\C$ be a $g$-clique of $\Gamma_s$, $0\le s < g$, that contains no vertices of type \tthree. Suppose $\C$ is not contained in a generator of $\Q_n$. We consider the number of points of $\C$ in each generator of $\Q_n$. Let $\Sigma$ be a generator of $\Q_n$ that contains the maximum number of points of $\C$. As $\C$ is not contained in $\Sigma$, there is a point $A$ of $\C$ that is not in $\Sigma$. By Result~\ref{property-star}, there is a unique generator $\Pi$ of $\Q_n$ that contains $A$ and meets $\Sigma$ in a $(g-1)$-space. Further, the points of $\Sigma$ that lie on a line of $\Q_n$ through $A$ are exactly the points of $\Sigma\cap\Pi$. 
 As $\C$ contains no points of type (iii), edges in $\C$ correspond to lines of $\Q_n$. In $\Gamma_s$, each vertex in $\C$ is adjacent to the vertex $A$, so in $\PG(n,2)$, the points of $\C\cap\Sigma$ lie in $\Sigma\cap\Pi$. Hence $|\Pi\cap\C|\geq |\Sigma\cap\C|+1$, which contradicts the choice of $\Sigma$ being the generator with the largest intersection with $\C$. Hence $\C$ is contained in a generator of $\Q_n$. As $|\C|=2^{g+1}-1$, the vertices of $\C$ correspond exactly to the points of this generator, and so $\C$ is a Class A $g$-clique.
\end{proof}

\begin{lemma}\Label{lem-1-type-ii} Every generator of $\Q_n$ contains at least one point of type {\rm (ii)}.
\end{lemma}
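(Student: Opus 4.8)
The plan is to show that a generator $\Sigma$ must contain a point collinear in $\Q_n$ with all of $\alpha_s$, by a dimension count on collinearity, and then to promote this to the full type (ii) condition. First I would restate the type (ii) condition in the paper's own language: a point $Q$ is of type (ii) exactly when $Q\notin\alpha_s$ and $\langle\alpha_s,Q\rangle$ is an $(s+1)$-space contained in $\Q_n$. Over $\GF(2)$ this span is the union of $\alpha_s$ with the lines $QP$, $P\in\alpha_s$, so $Q$ is of type (ii) if and only if $Q\in\Q_n\setminus\alpha_s$ and every line $QP$ with $P\in\alpha_s$ lies in $\Q_n$; that is, $Q$ is collinear in $\Q_n$ with every point of $\alpha_s$.

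Next I would fix a generator $\Sigma$ and set $\beta=\alpha_s\cap\Sigma$, of projective dimension $t$ with $-1\le t\le s$. Extend a basis of $\beta$ to a basis of $\alpha_s$ by adjoining $s-t$ points $P_1,\dots,P_{s-t}$; since $\alpha_s\cap\Sigma=\beta$, each $P_i\notin\Sigma$. For each $i$, Result~\ref{property-star} applied with $X=P_i$ shows that the points of $\Sigma$ lying on a line of $\Q_n$ through $P_i$ form the $(g-1)$-space $\Sigma\cap\Pi_i$, a hyperplane $H_i$ of $\Sigma$. The basis points of $\beta$ impose no condition, being already collinear with everything in the totally singular space $\Sigma$. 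Hence the set $\Delta$ of points of $\Sigma$ collinear with the chosen basis of $\alpha_s$ equals $\bigcap_{i=1}^{s-t}H_i$, so $\dim\Delta\ge g-(s-t)=g-s+t$. Because $s<g$ we have $g-s+t\ge t+1$, so $\Delta$ strictly contains $\beta$ (note $\beta\subseteq\Delta$). Choosing $Q\in\Delta\setminus\beta$ gives a point of $\Sigma$ with $Q\notin\alpha_s$ (as $\alpha_s\cap\Sigma=\beta$) that is collinear with a basis of $\alpha_s$.

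It then remains to promote collinearity with a basis to collinearity with all of $\alpha_s$, and thereby to $\langle\alpha_s,Q\rangle\subseteq\Q_n$. If $Q$ is collinear with two basis points $P,P'$, the plane $\pi=\langle Q,P,P'\rangle$ meets $\Q_n$ in the three distinct lines $QP,QP',PP'$; these form a triangle, so $\pi\cap\Q_n$ is neither one nor two lines, and Result~\ref{quadric-deg} forces $\pi\subseteq\Q_n$. Thus the set of points of $\alpha_s$ collinear with $Q$ is closed under taking the third point of a line through two of its members, hence is a $\GF(2)$-subspace; containing a basis, it is all of $\alpha_s$, so every line from $Q$ to a point of $\alpha_s$ lies in $\Q_n$ and $\langle\alpha_s,Q\rangle\subseteq\Q_n$, making $Q$ of type (ii). I expect the dimension count of the second paragraph to be the crux: the key point is that demanding collinearity with the whole of $\alpha_s$ costs at most $s-t$ hyperplanes, one per basis direction outside $\Sigma$, rather than something growing with the number of points of $\alpha_s$, and that $s<g$ is exactly what makes $g-s+t$ exceed $\dim\beta$. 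A secondary subtlety is the even-characteristic parabolic case, but since the whole argument is phrased through lines contained in $\Q_n$ (via Results~\ref{property-star} and~\ref{quadric-deg}) rather than through an associated bilinear form, the nucleus causes no difficulty and all three quadric types are treated uniformly.
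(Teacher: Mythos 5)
Your proof is correct, and it takes a genuinely different route from the paper's, although the two share the same counting skeleton: apply Result~\ref{property-star} once per basis point of $\alpha_s$ outside the fixed generator, lose at most one dimension per application, and use $s<g$ to guarantee a surviving point outside $\alpha_s$. The paper proceeds sequentially: it builds a chain of generators $\Sigma_1,\dots,\Sigma_k$, each obtained from Result~\ref{property-star} relative to the \emph{previous} generator and each checked to contain the part of $\alpha_s$ accumulated so far; the payoff is that the final generator $\Sigma_k$ contains all of $\alpha_s$ together with the witness point $X\in\Pi\cap\Sigma_1\cap\cdots\cap\Sigma_k$, so $\langle X,\alpha_s\rangle\subseteq\Sigma_k\subseteq\Q_n$ and the type (ii) condition is immediate. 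You instead apply Result~\ref{property-star} to each $P_i$ relative to the one fixed generator, producing hyperplanes $H_i$ of $\Sigma$ that are intersected in parallel; this makes the dimension count cleaner and absorbs the case $\alpha_s\subseteq\Sigma$ uniformly (the paper treats it separately), but it only gives collinearity of $Q$ with a \emph{basis} of $\alpha_s$, since the $H_i$ arise from distinct generators $\Pi_i$ and no common totally singular subspace is available. Your promotion step fills exactly this gap and is sound: for collinear-with-$Q$ points $P,P'$ one has $Q\notin PP'$ (as $Q\notin\alpha_s$), so $\langle Q,P,P'\rangle$ is a genuine plane meeting $\Q_n$ in three distinct lines, which Result~\ref{quadric-deg} cannot reconcile with one or two lines, forcing the plane into $\Q_n$; over $\GF(2)$ closure under third points makes the collinearity set a subspace, hence all of $\alpha_s$, and your opening observation that collinearity with all of $\alpha_s$ is equivalent to $\langle\alpha_s,Q\rangle\subseteq\Q_n$ over $\GF(2)$ finishes the argument. (In polar-space language your promotion lemma is just the linearity of $Q^{\perp}$, but your synthetic version via Result~\ref{quadric-deg} is preferable here, and you are right that it sidesteps the nucleus in the parabolic case.) The trade-off is fair on both sides: the paper pays with stepwise bookkeeping at each link of its chain, you pay with one extra geometric lemma.
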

\begin{proof}
Let $\Q_n$ be a non-singular quadric of projective index $g$ and let $\Pi$ be a generator of  $\Q_n$. There are two cases to consider. Firstly, if $\Pi$ contains $\alpha_s$, then $\Pi$ contains only points of type (i) and (ii). Hence, as $s<g$, $\Pi$ contains at least one point of type (ii). 
Next consider the case where $\Pi$ meets $\alpha_s$ in a subspace $\alpha_t$ of dimension $t$,  with $-1\le t\le s-1$.  Let $P_1$ be a point of $\alpha_s\backslash\alpha_t$. As  $P_1\notin\Pi$, by Result~\ref{property-star} there exists a unique generator $\Sigma_1$ of $\Q_n$ that contains $P_1$ and meets $\Pi$ in a $(g-1)$-space. Moreover, if $Y\in\alpha_t$, then $P_1Y\subset\alpha_s$ and so is a line of $\Q_n$, hence by  Result~\ref{property-star}, $\alpha_t\subset \Sigma_1$, and so $\alpha_t= \Pi\cap\Sigma_1\cap\alpha_s$.
 Further, if $X$ is a point of  $\Pi\cap\Sigma_1$ not in $\alpha_s$ and $Y\in\langle\alpha_t,P_1\rangle$, then the line $XY$ lies in $\Sigma_1$ and so is a line of $\Q_n$. 

If $\alpha_s\cap\Sigma_1\neq\alpha_s$, we  repeat this process. Let  $P_2$ be a point of $\alpha_s$ not in $\Sigma_1$. By  Result~\ref{property-star} there is a generator  $\Sigma_2$ of $\Q_n$ that contains $P_2$ and meets $\Sigma_1$ in a $(g-1)$-space. Moreover, if $Y\in\langle \alpha_t,P_1\rangle$, then $P_2Y\subset\alpha_s$, and so  is a line of $\Q_n$, hence by  Result~\ref{property-star}, 
$\langle \alpha_t,P_1\rangle\subset\alpha_s\subset \Sigma_2$. So 
$\langle \alpha_t,P_1,P_2\rangle\subset\Sigma_2$, and $\alpha_t=\Pi\cap\Sigma_1\cap\Sigma_2\cap\alpha_s$.
Note that $\Pi\cap\Sigma_1\cap\Sigma_2$ has dimension at least $g-2$. 
Further, if $X$ is a point of $\Pi\cap\Sigma_1\cap\Sigma_2$ not in $\alpha_s$,
and $Y\in \langle \alpha_t,P_1,P_2\rangle$, 
 then $XY$ lies in $\Sigma_2$ and so is a line of $\Q_n$. 

Repeat this process a total of $k\leq	s-t$ times, 
until $\langle \alpha_t,P_1,\ldots,P_k\rangle=\alpha_s$.
Let $H=\Pi\cap\Sigma_1\cap\cdots\cap\Sigma_{k}$, so $H$ has dimension $d\geq g-k\geq  g-(s-t)$,  $H\cap\alpha_s=\alpha_t$, and $\alpha_s=\langle \alpha_t,P_1,\ldots,P_k\rangle\subset\Sigma_k$.
Note that $\dim H-\dim\alpha_t=d-t\geq g-(s-t)-t=g-s>0$, so $H\backslash \alpha_t$ is non-empty.
Let $X$ be a point of $H$ not in $\alpha_s$, and let $Y\in\alpha_s$. So $X,Y\in\Sigma_k$, hence   $XY$ is a line of $\Q_n$. That is,  $\langle X,\alpha_s\rangle$ is an $(s+1)$-space of $\Q_n$ and hence $X$ is a type (ii) point.  
As $X\in H\subset \Pi$, $\Pi$ contains at least one point of type (ii) as required.
\end{proof}

We now show that there are only two types of $g$-cliques in $\Gamma_s$, namely those of Class A and B described in Lemma~\ref{g-cliq-1}.

\begin{lemma}\Label{classify-cliq}
Let $\C$ be a $g$-clique in $\Gamma_s$, $0\leq s<g$,  then $\C$ is a $g$-clique of Class A or B.
\end{lemma}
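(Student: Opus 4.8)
The plan is to split $\C$ according to vertex type and show that as soon as $\C$ contains a type-(iii) vertex it must be a Class~B clique. Write $T_1,T_2,T_3$ for the sets of vertices of $\C$ of type (i), (ii), (iii) respectively, and call two points \emph{collinear} if they lie on a line of $\Q_n$. If $T_3=\emptyset$ then $\C$ is a Class~A clique by Lemma~\ref{conv-g-cliq-1}, so assume $T_3\neq\emptyset$. By Corollary~\ref{cor-edges} (Table~\ref{table-edges}) every edge of $\C$ joining two type-(i)/(ii) vertices and every edge joining type-(i)/(iii) vertices is a line of $\Q_n$; since pairwise collinear points of a polar space span a subspace contained in $\Q_n$, both $\alpha_s\cup T_2$ and $T_1\cup T_3$ span singular subspaces, which I would extend to generators $\Sigma\supseteq\langle\alpha_s,T_2\rangle$ and $\Pi\supseteq\langle T_1\cup T_3\rangle$. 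Then $\alpha_s\subseteq\Sigma$, while $\alpha_s\not\subseteq\Pi$ (else $\langle\alpha_s,R\rangle\subseteq\Pi\subseteq\Q_n$ for $R\in T_3$, contradicting type (iii)). Also $T_2\neq\emptyset$: otherwise $\C=T_1\cup T_3$ would span a whole generator containing no type-(ii) point, contradicting Lemma~\ref{lem-1-type-ii}. Finally, as type-(ii) and type-(iii) vertices are joined in $\C$ only by $2$-secants, no vertex of $T_2$ lies in $\Pi$ and no vertex of $T_3$ lies in $\Sigma$ (a point in the generator would be collinear with everything in it).

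The engine is a counting argument driven by Result~\ref{property-star}. Fix $Q\in T_2$ and $R\in T_3$; since $Q\notin\Pi$ and $R\notin\Sigma$, Result~\ref{property-star} produces $(g-1)$-spaces $H_Q=\Pi\cap\Pi_Q$ and $K_R=\Sigma\cap\Sigma_R$ consisting exactly of the points of $\Pi$ (resp.\ $\Sigma$) collinear with $Q$ (resp.\ $R$). Reading off the $2$-secant adjacencies: $T_3$ is disjoint from $H_Q$, so $T_3\subseteq\Pi\setminus H_Q$ and $|T_3|\le 2^g$; and $T_2$ is disjoint from both $K_R$ and $\alpha_s$, so $T_2\subseteq(\Sigma\setminus K_R)\setminus\alpha_s$, giving $|T_2|\le 2^g-2^s$ (here $\alpha_s\cap K_R$ is a hyperplane of $\alpha_s$, since $R$ cannot be collinear with all of $\alpha_s$ without making $\langle\alpha_s,R\rangle$ singular). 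Likewise $T_1\subseteq\alpha_s\cap K_R$ forces $|T_1|\le 2^s-1$. As the three bounds sum to $(2^s-1)+(2^g-2^s)+2^g=2^{g+1}-1=|\C|$, all three are equalities.

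These equalities pin down the geometry. From $|T_3|=2^g=|\Pi\setminus H_Q|$ I get $T_3=\Pi\setminus H_Q$, so $T_3$ spans $\Pi$ and $\Pi=\langle T_1\cup T_3\rangle$ really is the generator. From $|T_2|=2^g-2^s$ I get $T_2=(\Sigma\setminus K_R)\setminus\alpha_s$ for \emph{every} $R\in T_3$; since this set is independent of $R$ and $\Sigma\setminus\alpha_s$ spans $\Sigma$, describing each $K_R$ by a $\GF(2)$-linear functional on $\Sigma$ shows all the $K_R$ coincide in a single $(g-1)$-space $K$, and $\Sigma=\langle\alpha_s,T_2\rangle$. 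Now every point of $K$ is collinear with all of $T_3$ (as $K=K_R\subseteq\Sigma_R$) and, lying in $\Sigma$, with all of $T_1$; hence it is collinear with $\langle T_1\cup T_3\rangle=\Pi$, and maximality of the generator $\Pi$ forces $K\subseteq\Pi$. Thus $K\subseteq\Sigma\cap\Pi$ and $\dim(\Sigma\cap\Pi)\ge g-1$; the reverse inequality is immediate from $T_3\subseteq\Pi\setminus\Sigma$ with $|T_3|=2^g$. Therefore $\Sigma\cap\Pi=K$ is a $(g-1)$-space, $\Sigma\supseteq\alpha_s$, $\Pi\not\supseteq\alpha_s$, and a final count identifies $T_1=\alpha_s\cap\Pi=\C_a$, $T_2=\Sigma\setminus(\alpha_s\cup\Pi)=\C_b$, $T_3=\Pi\setminus\Sigma=\C_c$, so $\C$ is a Class~B clique as described in Lemma~\ref{g-cliq-1}.

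The step I expect to be the main obstacle is controlling $\dim(\Sigma\cap\Pi)$ from below: the individual counting bounds are routine once Result~\ref{property-star} supplies the hyperplanes $H_Q,K_R$, but deducing that the $K_R$ all coincide and that the common hyperplane $K$ actually sits inside $\Pi$ — so that $\Sigma$ and $\Pi$ genuinely meet in a $(g-1)$-space rather than in something smaller — is exactly where the $2$-secant structure of the type-(ii)/(iii) adjacency has to be exploited. Everything after that is coordinate-free bookkeeping with the point counts.
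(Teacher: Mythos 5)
Your proof is correct, and its overall skeleton coincides with the paper's: the same decomposition of $\C$ by vertex type, the same two generators $\Sigma\supseteq\langle\alpha_s\cup T_2\rangle$ and $\Pi\supseteq\langle T_1\cup T_3\rangle$, the same non-emptiness arguments via Lemmas~\ref{conv-g-cliq-1} and \ref{lem-1-type-ii}, and the identical counting bounds $|T_1|\le 2^s-1$, $|T_2|\le 2^g-2^s$, $|T_3|\le 2^g$ obtained from Result~\ref{property-star} and forced to equality by $|\C|=2^{g+1}-1$. Where you genuinely diverge is the endgame. The paper fixes a single $R$ and a single $Q$, obtains generators $\Pi_1$ (through $R$, meeting $\Sigma$ in a $(g-1)$-space) and $\Sigma_1$ (through $Q$, meeting $\Pi$ in a $(g-1)$-space), and proves $\Pi=\Pi_1$ by contradiction: a second point $R'\in T_3\setminus\Pi_1$ yields a generator $\Pi_2$ with $\Sigma\cap\Pi_1=\Sigma\cap\Pi_2$, and then Result~\ref{quadric-deg} applied to the $(g+1)$-space $\langle\Pi_1,\Pi_2\rangle$, together with the intra-$T_3$ clique edge $RR'$, forces $R\in\Sigma$, a contradiction; uniqueness in Result~\ref{property-star} then gives $\Sigma=\Sigma_1$. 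You instead exploit that the equality $T_2=\Sigma\setminus(K_R\cup\alpha_s)$ holds for \emph{every} $R\in T_3$ simultaneously, deduce that all the hyperplanes $K_R$ coincide in a single $K$ (your functional phrasing is fine; the cleanest justification is that $K_R\setminus\alpha_s$ spans $K_R$, since $K_R\cap\alpha_s$ is a proper subspace of $K_R$), and then push $K$ into $\Pi$ by a perp-and-maximality argument, using that $T_3=\Pi\setminus H_Q$ spans $\Pi$; this reads off $\dim(\Sigma\cap\Pi)=g-1$ directly, with no contradiction and no second invocation of Result~\ref{quadric-deg}. The trade-off: the paper stays entirely within its two quoted Results, while your endgame additionally invokes the standard polar-space facts that a point collinear with a spanning set of a singular subspace is collinear with the whole subspace and that the join is then singular (the paper already uses facts of this kind at the setup stage, so this is a modest cost), and in exchange your argument is more linear and isolates exactly what the equality data determines. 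Two trivia: your claim $\Sigma=\langle\alpha_s,T_2\rangle$ is never actually needed, and your argument works verbatim at $s=0$, where equality gives $T_1=\emptyset$, matching $|\C_a|=2^0-1=0$ in Lemma~\ref{g-cliq-1}.
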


\begin{proof}
Let $\clique$ be a $g$-clique of $\Gamma_s$ and denote the subsets of vertices of $\clique$ of type (i), (ii), (iii) by $\setX$, $\setY$, $\setZ$ respectively. 
If $\setZ=\emptyset$, then by Lemma~\ref{conv-g-cliq-1},
$\clique$ corresponds to a generator of $\Q_n$ containing $\alpha_s$, and so  is of Class A. So suppose $\setZ\neq \emptyset$. 

We begin by  constructing two generators of $\Q_n$ whose union contains the $g$-clique $\C$. 
Firstly, as $\C$ is a clique of $\Gamma_s$, the subset $\setX\cup\setZ$ is also a clique, so any two vertices of $\setX\cup\setZ$ are adjacent in $\Gamma_s$. As 
 $\setX\cup\setZ$ contains only vertices of type (i) and (iii), in $\PG(n,2)$, any two points of $\setX\cup\setZ$ lie on 
 a line of $\Q_n$. Hence $\setX\cup\setZ$ is a subspace of $\Q_n$ and so by \cite[Theorem 22.4.1]{HT} is contained in  a generator $\Pi$ of $\Q_n$. 
Secondly,  consider the set of points  $\alpha_s\cup\setY$ in $\Q_n$. Let $Q\in\setY$, so $Q$ has type (ii), and $\langle Q,\alpha_s\rangle$ is contained in $\Q_n$. Hence  $\alpha_s\cup\setY$ is a subspace of $\Q_n$ and so is contained in  a generator $\Sigma$ of $\Q_n$. So we have $\C\subset \Pi\cup\Sigma$. 
To show that $\C$ is a clique  of Class B, we need to show that $\Pi\cap\Sigma$ has dimension $g-1$.

We first show that $\setY$ is not empty. Suppose $\setY=\emptyset$, then 
$\C=\setX\cup\setZ$ is contained in the $g$-space $\Pi$. As $|\C|=2^{g+1}-1$, 
we have  $\C=\setX\cup\setZ=\Pi$.  However, by Lemma~\ref{lem-1-type-ii}, $\Pi$ contains at least one point of type (ii), a contradiction.  Thus $\setY\ne\emptyset$.

As $\setY,\setZ$ are not empty, let $Q\in\setY$ and $R\in\setZ$. As $Q,R$ lie in a clique of $\Gamma_s$, they are adjacent in $\Gamma_s$. Hence by Corollary~\ref{cor-edges},  $QR$ is a 2-secant of $\Q_n$. 
As $Q\in\setY\subset\Sigma\subset\Q_n$ and $QR$ is a 2-secant,  we have $R\notin\Sigma$. Similarly $R\in\setZ\subset\Pi\subset\Q_n$ and $QR$ a 2-secant implies $Q\notin\Pi$. 
 In summary,  we have 
 $$
  \C\subset\Sigma \cup \Pi;\quad \setX\subset\alpha_s\cap\Pi\cap\Sigma;\quad \setY\subset \Sigma\setminus\Pi; \quad \setZ\subset\Pi\setminus\Sigma.$$

Next we determine the size of $\setX$, $\setY$ and $\setZ$. 
As $\setZ\neq\emptyset$, there is a point $R\in\setZ$, so $R\notin\Sigma$. By Result~\ref{property-star},  there is a unique generator $\Pi_1$ of $\Q_n$ that contains $R$ and meets $\Sigma$ in a $(g-1)$-space denoted  $H=\Sigma\cap\Pi_1$. 
There are two cases to consider as $H\cap\alpha_s$ has dimension $s$ or $s-1$. If $H$ contained $\alpha_s$, then $\langle R,\alpha_s\rangle\subset\Pi_1$ would be a subspace of $\Q_n$, which implies that $R$ is type (ii), a contradiction. Thus $H\cap\alpha_s$ is an $(s-1)$-space. 
If $P\in\setX$, then $P,R\in\C$, so $P,R$ are adjacent in $\Gamma_s$ and so $PR$ is a line of $\Q_n$. Thus $P\in H$, and so $P\in H\cap\alpha_s$. Thus $\setX\subseteq H\cap\alpha_s$, and so 
 $|\setX|\leq |H\cap\alpha_s|=2^s-1$.  By the construction of $H$, each point in $H\setminus\alpha_s$ lies on a line of $\Q_n$ with $R$, and each point of $\Sigma\setminus(H\cup\alpha_s)$ lies on a 2-secant of $\Q_n$ with $R$. So the type (ii) points of $\C$ are contained in $ \Sigma\setminus(H\cup\alpha_s)$. That is, $|\setY|\leq|\Sigma\setminus(H\cup\alpha_s)|=(2^{g+1}-1)-\big((2^g-1)+2^s)\big)=2^g-2^s$.
 
As $\setY\neq\emptyset$, there is a point   $Q\in\setY$,  so $Q\in\Sigma\setminus\Pi$. 
 By Result~\ref{property-star},  there is a unique generator $\Sigma_1$ of $\Q_n$ that contains $Q$ and meets $\Pi$ in a $(g-1)$-space. Hence $Q$ is on a line of $\Q_n$ with the $2^g-1$ points of $\Pi\cap\Sigma_1$; and  $Q$ is on a 2-secant of $\Q_n$ with the $(2^{g+1}-1)-(2^g-1)=2^g$ points of $\Pi\setminus\Sigma_1$. 
If $R$ is a point of $\setZ$, then as $Q,R\in\C$, they are adjacent in $\Gamma_s$ and so $QR$ is a 2-secant of $\Q_n$. Hence the points of $\setZ$ lie in $\Pi\setminus\Sigma_1$, and so
 $|\setZ|\leq2^g$.

As $|\C|=2^{g+1}-1$, we need equality in all three of these  bounds, that is,
$|\setX|=2^s-1$,   
$|\setY|=2^g-2^s$,
and $|\setZ|=2^g$.
Moreover,  \begin{equation}\label{eqn-gcliq} \setX=\alpha_s\cap\Pi_1,\quad
\setY=\Sigma\setminus(\alpha_s\cup\Pi_1), \quad
\setZ=\Pi\setminus\Sigma_1.\end{equation}
To show that $\C$ is a $g$-clique of Class B, we need to show that $\Pi=\Pi_1$ and $\Sigma=\Sigma_1$. 
Suppose that $\Pi\ne\Pi_1$, so $\Pi\cap\Pi_1$ has dimension at most $g-1$, that is $|\Pi\cap\Pi_1|\leq 2^g-1$. As $\Pi$ contains $\setZ$,  and $|\setZ|=2^g>|\Pi\cap\Pi_1|$, 
there exists a point $R'\in\setZ$ with $R'\in\Pi\backslash\Pi_1$.  By Result~\ref{property-star}, there exists a unique generator $\Pi_2$ of $\Q_n$ which contains $R'$ and meets $\Sigma$ in a $(g-1)$-space. 
Further, for each point $X\in\Sigma\setminus\Pi_2$, $XR'$ is a 2-secant of $\Q_n$. Thus $\setY\subset\Sigma\setminus\Pi_2$. By (\ref{eqn-gcliq}), $\setY=\Sigma\setminus(\alpha_s\cup\Pi_1)$, moreover we have $|\Sigma\setminus(\alpha_s\cup\Pi_1)|=|\Sigma\setminus(\alpha_s\cup\Pi_2)|$. Hence $\Sigma\cap\Pi_1=\Sigma\cap\Pi_2$, and so $\Pi_1\cap\Pi_2$ is a $(g-1)$-space  in $\Sigma$. Recall that $R\in\Pi_1$, and by assumption $R'\in\Pi_2\setminus\Pi_1$, so $\Pi_1\neq\Pi_2$. Thus $\langle\Pi_1,\Pi_2\rangle$ is a $(g+1)$-space, and so by Result~\ref{quadric-deg}, meets $\Q_n$ in exactly the two generators $\Pi_1,\Pi_2$. Now $R,R'\in\setZ$, so $\{R,R'\}$ is an edge of $\Gamma_s$, and so $RR'$ is a line of $\Q_n$. As $R'\in\Pi_2\setminus\Pi_1$, and $RR'$ is a line of $\Q_n$ in $\langle\Pi_1,\Pi_2\rangle$, we have $R\in\Pi_2$. So $R\in\Pi_2\cap\Pi_1\subset\Sigma$, contradicting the choice of $R\not\in\Sigma$.
 Hence 
 $\Pi=\Pi_1$. Thus $\Sigma$ meets $\Pi$ in a $(g-1)$-space, so by the construction of $\Sigma_1$, we have $\Sigma=\Sigma_1$.
 Substituting into (\ref{eqn-gcliq}), we see that $\C$ is a $g$-clique of Class B.
\end{proof}

\begin{lemma}
 The maximum size of a clique in  $\Gamma_s$ is $2^{g+1}-1$.
 \end{lemma}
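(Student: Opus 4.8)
The plan is to prove the two inequalities separately. The lower bound is immediate: Lemma~\ref{g-cliq-1} already exhibits $g$-cliques of size $2^{g+1}-1$, so the maximum clique size is at least $2^{g+1}-1$. The substance is the matching upper bound, that no clique of $\Gamma_s$ has more than $2^{g+1}-1$ vertices.

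To this end I would take an arbitrary clique $\C$ of $\Gamma_s$ and split it into its type (i), (ii), (iii) vertices $\setX,\setY,\setZ$, exactly as in Lemma~\ref{classify-cliq}, but run the structural analysis \emph{without} assuming $|\C|=2^{g+1}-1$. The observation driving the argument is that the three cardinality bounds obtained in the proof of Lemma~\ref{classify-cliq}, namely $|\setX|\le 2^s-1$, $|\setY|\le 2^g-2^s$ and $|\setZ|\le 2^g$, are derived purely from the adjacency rules of Corollary~\ref{cor-edges} together with Result~\ref{property-star} and Result~\ref{quadric-deg}; they never use the size of $\C$, only that $\C$ is a clique. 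Summing them yields $|\C|\le (2^s-1)+(2^g-2^s)+2^g=2^{g+1}-1$, which is exactly what is wanted.

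The care needed is in the degenerate cases, since those bounds were obtained by choosing a point of $\setZ$ (to bound $|\setX|$ and $|\setY|$) and a point of $\setY$ (to bound $|\setZ|$), so one of $\setY,\setZ$ may be empty. I would dispose of these directly by the collinearity argument already used inside Lemma~\ref{classify-cliq}. If $\setZ=\emptyset$, then by Corollary~\ref{cor-edges} every edge of $\C$ corresponds to a line of $\Q_n$, so the points of $\C$ are pairwise collinear on $\Q_n$ and hence, by \cite[Theorem 22.4.1]{HT}, span a subspace of $\Q_n$ lying in a single generator; thus $|\C|\le 2^{g+1}-1$. If $\setZ\neq\emptyset$ but $\setY=\emptyset$, then $\C=\setX\cup\setZ$ consists only of type (i) and (iii) points, whose pairwise adjacencies are again lines of $\Q_n$, so the same argument places $\C$ inside one generator and gives $|\C|\le 2^{g+1}-1$. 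The remaining case $\setY\neq\emptyset$ and $\setZ\neq\emptyset$ is precisely the setting in which all three bounds of Lemma~\ref{classify-cliq} are simultaneously available, so the summation above applies.

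The only real obstacle is bookkeeping rather than new geometry: one must verify that the cardinality estimates of Lemma~\ref{classify-cliq} are genuinely independent of the hypothesis $|\C|=2^{g+1}-1$ and hold verbatim for an arbitrary clique, and that the two boundary cases ($\setY$ or $\setZ$ empty) are covered by the elementary collinearity argument. Once this is noted, the bound follows with no further work.
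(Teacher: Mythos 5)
Your proof is correct, but it takes a genuinely different route from the paper's. The paper argues by contradiction at size $2^{g+1}$: it supposes $\K$ is a clique of size $2^{g+1}$, notes that deleting any vertex leaves a $g$-clique which by Lemma~\ref{classify-cliq} must be of Class A or B, and then compares the type-(i)/(ii)/(iii) vertex counts of $\K\setminus P$ and $\K\setminus Q$ against Table~\ref{table-types} to show no column can be satisfied both ways; the case $s=0$ is deferred to ``a similar argument.'' You instead bound an arbitrary clique $\C$ directly, by observing that the three inequalities $|\setX|\leq 2^s-1$, $|\setY|\leq 2^g-2^s$, $|\setZ|\leq 2^g$ inside the proof of Lemma~\ref{classify-cliq} use only the clique property (via Corollary~\ref{cor-edges}, Result~\ref{property-star} and Result~\ref{quadric-deg}), never the hypothesis $|\C|=2^{g+1}-1$, and summing them gives $|\C|\leq 2^{g+1}-1$; your treatment of the degenerate cases $\setZ=\emptyset$ or $\setY=\emptyset$ (where the clique is pairwise collinear on $\Q_n$, hence lies in a generator) is exactly what is needed, since the bounds on $|\setX|,|\setY|$ require a point of $\setZ$ and the bound on $|\setZ|$ a point of $\setY$. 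Your version buys uniformity -- it handles $s=0$ without a separate argument (there $|\setX|\leq 2^0-1=0$ and the sum still works) and avoids the vertex-removal trick and the table -- at the cost of not being a statement-level citation: Lemma~\ref{classify-cliq} as stated assumes $|\C|=2^{g+1}-1$, so a referee must reopen its proof to confirm the bounds are size-free, as you correctly flag; a clean write-up would extract those three bounds as a standalone lemma about arbitrary cliques, from which both the classification and the maximality bound follow.
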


\begin{proof} 
Suppose $\Gamma_s$, $s>0$, contains a clique $\K$ of size $2^{g+1}$.  Let $X$ be a vertex in $\K$, then $\K\setminus X$ is a $g$-clique, and so by 
 Theorem~\ref{classify-cliq}, $\K\setminus X$ has  Class A or B.  Table~\ref{table-types} gives the number of vertices of each type in the two different $g$-cliques. 
 \begin{table}[h]\caption{Number of vertices of each type in each $g$-clique}\label{table-types}
\begin{center}
 \begin{tabular}{|c|c|c|}
\hline
&$g$-clique A&$g$-clique B\\
\hline
vertex type (i)&$2^{s+1}-1$&$2^s-1$\\ \hline
vertex type (ii)&$2^{g+1}-2^{s+1}$&$2^g-2^s$\\ \hline
vertex type (iii)&$0$&$2^g$\\ \hline
\end{tabular}\end{center}\end{table}
As $s>0$ and $\K\setminus X$ has  Class A or B,  $\K\setminus X$ contains vertices of both type (i) and (ii).
Let $P$ be a vertex of type (i) in $\K$ and $Q$ a vertex of type (ii) in $\K$. If $\K\setminus P$ has Class A, then using Table~\ref{table-types}, we see that $\K\setminus Q$ satisfies neither column, and so is not a $g$-clique of $\Gamma_s$, a contradiction. Similarly, if $\K\setminus P$ has Class B, then  $\K\setminus Q$ satisfies neither column, and so is not a $g$-clique of $\Gamma_s$. So there are no cliques of size $2^{g+1}$, hence the $g$-cliques are the maximal cliques of $\Gamma_s$. A similar argument proves the result when $s=0$. 
\end{proof}

In summary, we have classified the maximal cliques of $\Gamma_s$ as follows. 

\begin{theorem} \Label{max-cl}
Let $\Q_n$ be a non-singular quadric of $\PG(n,2)$ of projective index $g\geq1$, and let $\Gamma_s$, $0\leq s<g$, be the graph constructed in Theorem~\ref{main-thm}.
If $\C$ is a maximal clique of $\Gamma_s$, then $\C$ is a $g$-clique of Class A or B.
\end{theorem}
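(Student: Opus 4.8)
The plan is to derive Theorem~\ref{max-cl} from the two facts already in hand: by Lemma~\ref{classify-cliq} every $g$-clique (clique of size $2^{g+1}-1$) is of Class A or B, and by the lemma that the maximum size of a clique in $\Gamma_s$ equals $2^{g+1}-1$, no clique is larger. The real content of the statement, and the gap those two results leave open, is that an inclusion-maximal clique must \emph{attain} the value $2^{g+1}-1$; a priori a clique could be maximal without being of maximum size. I would close this gap by proving the stronger assertion that \emph{every} clique $\C$ of $\Gamma_s$ is contained in some $g$-clique. Granting this, an inclusion-maximal $\C$ equals the $g$-clique containing it, and Lemma~\ref{classify-cliq} identifies that $g$-clique as Class A or B. Throughout I write $\setX,\setY,\setZ$ for the vertices of $\C$ of types (i), (ii), (iii), and use Corollary~\ref{cor-edges}: a pair among these types is an edge exactly when it spans a line of $\Q_n$, except that a type~(ii)--type~(iii) pair is an edge exactly when it spans a $2$-secant.

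The case $\setZ=\emptyset$ is clean. Then $\C$ consists of type~(i) and (ii) vertices only, so any two lie on a line of $\Q_n$. First I check that $\alpha_s\cup\C$ is still a clique: a point $P\in\alpha_s$ has type~(i), and for $Q\in\setY$ the line $PQ$ lies in the $(s+1)$-space $\langle Q,\alpha_s\rangle\subset\Q_n$, while for $P'\in\setX$ the line $PP'$ lies in $\alpha_s$, so $P$ is adjacent to every vertex of $\C$. Thus $\alpha_s\cup\C$ is a set of pairwise collinear points of $\Q_n$, and by \cite[Theorem 22.4.1]{HT} it spans a totally singular subspace $W\supseteq\alpha_s$, which lies in a generator $\Sigma\supseteq\alpha_s$. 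By Lemma~\ref{g-cliq-1}A the points of $\Sigma$ form a $g$-clique of Class A containing $\C$, as required.

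Now suppose $\setZ\neq\emptyset$; here I aim to exhibit a Class B $g$-clique containing $\C$. Since $\setX\cup\setZ$ consists of type~(i) and (iii) vertices, it is pairwise collinear, so it spans a totally singular subspace lying in a generator $\Pi$; because $\setZ$ contains a type~(iii) point, $\Pi$ cannot contain $\alpha_s$ (a generator through $\alpha_s$ carries only points of types (i) and (ii)). Likewise $\alpha_s\cup\setY$ is pairwise collinear, so it lies in a generator $\Sigma\supseteq\alpha_s$. The $2$-secant condition separates the two generators from $\C$: fixing $R\in\setZ$ we have $\Pi\subseteq R^{\perp}$, so any $Q\in\setY$ (which is not collinear with $R$) satisfies $Q\notin\Pi$, giving $\setY\cap\Pi=\emptyset$; and $\setZ\cap\Sigma=\emptyset$ since no type~(iii) point lies in a generator through $\alpha_s$. \emph{Provided $\Pi$ and $\Sigma$ can be chosen to meet in a $(g-1)$-space}, Lemma~\ref{g-cliq-1}B produces a $g$-clique with parts $\C_a=\alpha_s\cap\Pi$, $\C_b=\Sigma\setminus(\alpha_s\cup\Pi)$, $\C_c=\Pi\setminus\Sigma$, and then $\setX\subseteq\C_a$ (by construction $\setX\subseteq\alpha_s\cap\Pi$), $\setZ\subseteq\C_c$ (as $\setZ\subseteq\Pi$ and $\setZ\cap\Sigma=\emptyset$), and $\setY\subseteq\C_b$ (each $Q\in\setY$ lies in $\Sigma$, avoids $\alpha_s$ by type, and avoids $\Pi$). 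This gives $\C\subseteq\C_a\cup\C_b\cup\C_c$ and finishes the argument, the degenerate possibility $\setY=\emptyset$ being covered by taking any generator $\Sigma\supseteq\alpha_s$.

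The main obstacle is exactly the emphasised clause: arranging $\Pi\supseteq\langle\setX\cup\setZ\rangle$ and $\Sigma\supseteq\langle\alpha_s\cup\setY\rangle$ to meet in a $(g-1)$-space. The coincidence argument of Lemma~\ref{classify-cliq}, which forced $\Pi=\Pi_1$ and $\Sigma=\Sigma_1$, used the exact $g$-clique cardinalities $|\setY|=2^g-2^s$ and $|\setZ|=2^g$ and so is not available for an arbitrary clique. My plan is to fix $R\in\setZ$ and take, via Result~\ref{property-star}, the unique generator $\Pi^{*}$ through $R$ meeting $\Sigma$ in a $(g-1)$-space $H=\Sigma\cap\Pi^{*}$, and then to prove $\setZ\subseteq\Pi^{*}$: for any other $R'\in\setZ$, let $\Pi'$ be the generator through $R'$ meeting $\Sigma$ in a $(g-1)$-space; if $\Pi^{*}$ and $\Pi'$ meet $\Sigma$ in the same hyperplane $H$, then $\Pi^{*}\cap\Pi'=H$, so $\langle\Pi^{*},\Pi'\rangle$ is a $(g+1)$-space meeting $\Q_n$ in exactly $\Pi^{*}\cup\Pi'$ by Result~\ref{quadric-deg}, whence the line $RR'\subseteq\Q_n$ (an edge of $\C$) forces $R\in\Pi'$ and thus $R\in H\subseteq\Sigma$, contradicting $R\notin\Sigma$. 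The delicate point is the hypothesis that $\Pi^{*}$ and $\Pi'$ cut $\Sigma$ in the \emph{same} hyperplane, equivalently that every $R'\in\setZ$ is collinear with all of $H$; establishing this collinearity for each $R'$ without the cardinality input of Lemma~\ref{classify-cliq} is the crux of the proof.
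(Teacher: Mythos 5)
Your diagnosis of the logical situation is fair, and the easy half of your plan works: for a clique with $\setZ=\emptyset$, adjoining $\alpha_s$ still gives a clique (type (i)--(i) and (i)--(ii) pairs are always edges), the resulting pairwise-collinear point set spans a singular subspace lying in a generator $\Sigma\supseteq\alpha_s$, and Lemma~\ref{g-cliq-1}A supplies a Class~A $g$-clique containing $\C$; this mirrors the span-and-generator argument the paper uses inside Lemmas~\ref{conv-g-cliq-1} and~\ref{classify-cliq}. Note, though, that your route is genuinely different from the paper's: the paper never proves your saturation claim that every clique extends to a $g$-clique. Its proof of Theorem~\ref{max-cl} is simply the combination of Lemma~\ref{classify-cliq} (every clique of size $2^{g+1}-1$ is Class~A or~B) with the lemma that no clique of size $2^{g+1}$ exists, concluding that the $g$-cliques are the maximal cliques. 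What powers Lemma~\ref{classify-cliq} is precisely the cardinality $|\C|=2^{g+1}-1$: the forced equalities $|\setX|=2^s-1$, $|\setY|=2^g-2^s$, $|\setZ|=2^g$ are what make the coincidences $\Pi=\Pi_1$ and $\Sigma=\Sigma_1$ provable there.

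The problem is that your proposal, in the case $\setZ\neq\emptyset$ that carries all the content, terminates at exactly the step you label the crux, with no argument supplied: having chosen a generator $\Sigma\supseteq\langle\alpha_s,\setY\rangle$ and one $R\in\setZ$ with its generator $\Pi^{*}$ and hyperplane $H=\Sigma\cap\Pi^{*}$ from Result~\ref{property-star}, you must show every $R'\in\setZ$ is collinear with all of $H$ (equivalently, that all points of $\setZ$ determine the \emph{same} hyperplane of $\Sigma$). For a small clique this is not forced by the constraints you have: the hyperplane $H'$ attached to $R'$ is pinned down only by $\setX\subseteq H'$ and $\setY\cap H'=\emptyset$, which for small $\setX,\setY$ leaves many admissible hyperplanes, and distinct points of $\setZ$ may a priori select distinct ones; without the cardinality input of Lemma~\ref{classify-cliq} you need some new mechanism — a cleverer choice of $\Sigma$, or (more naturally, since the theorem only concerns inclusion-maximal $\C$) first invoking maximality, which converts ``adjacent to everything in $\C$'' into membership in $\C$ and can recover the forcing that cardinality provides in the paper — and no such mechanism appears in the proposal. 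The degenerate subcase $\setY=\emptyset$ is also not ``covered by taking any generator $\Sigma\supseteq\alpha_s$'': a Class~B clique requires $\dim(\Sigma\cap\Pi)=g-1$, which fails for most generators through $\alpha_s$, and the same coincidence problem for $\setZ$ recurs there. As submitted, then, this is a plan with its central step open rather than a proof.
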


\subsection{Counting maximal cliques}\Label{sec:max-count}

In the previous section, we classified the maximal cliques in the  graph $\Gamma_s$, we count them here.

\begin{theorem}\Label{thm-max-cliq}
 Let $\Q_n$ be a non-singular quadric in $\PG(n,2)$ of projective index $g\geq1$. Let $\Gamma$ be the point-graph of $\Q_n$ and let $\Gamma_s$, $0\leq s<g$,
 be the graph constructed in Theorem~{\rm \ref{main-thm}}.
\begin{enumerate}
\item Let $\Q_n=\E_{2r+1}$, then 
\begin{enumerate}
\item  $\Gamma$ has $(2^2+1)(2^3+1)\cdots(2^{r+1}+1)$ maximal cliques.
\item $\Gamma_s$ has $(2^2+1)(2^3+1)\cdots(2^{r-s}+1)\times\big(2^{r+2}-2^{r-s+1}+1\big)$ maximal cliques.
\end{enumerate}
\item If $\Q_n=\H_{2r+1}$, then 
\begin{enumerate}
\item  $\Gamma$ has $(2^0+1)(2^1+1)\cdots(2^{r}+1)$ maximal cliques.
\item $\Gamma_s$ has $(2^0+1)(2^1+1))\cdots(2^{r-s-1}+1)\times\big(2^{r+1}-2^{r-s}+1\big)$ maximal cliques.
\end{enumerate}
\item If $\Q_n=\P_{2r}$, then 
\begin{enumerate}
\item  $\Gamma$ has $(2^1+1)(2^2+1)\cdots(2^{r}+1)$ maximal cliques.
\item $\Gamma_s$ has $(2^1+1)(2^2+1)\cdots(2^{r-s-1}+1)\times\big(2^{r+1}-2^{r-s}+1\big)$ 
maximal cliques.
\end{enumerate}
\end{enumerate}
\end{theorem}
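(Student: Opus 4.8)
The plan is to invoke the classification in Theorem~\ref{max-cl}: every maximal clique of $\Gamma_s$ is a $g$-clique of Class A or Class B, so it suffices to count the cliques of each class and add. All three parts are handled by one argument with different numerical inputs. I would use two standard polar-space facts. First, the number of generators of a non-singular quadric of each type over $\GF(2)$ --- these are precisely the products in parts (a), available from \cite[Theorem 22.5.1]{HT}. Second, the number of generators of $\Q_n$ through a fixed $(g-1)$-space $H$ equals the number of points of the rank-one polar space $H^\perp/H$, which is $q^2+1=5$ for $\Er$, $2$ for $\Hr$, and $q+1=3$ for $\Pr$ when $q=2$.

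I would first count the Class A cliques. By Lemma~\ref{g-cliq-1}(A) these are exactly the generators of $\Q_n$ containing $\alpha_s$, and these correspond to the generators of the quotient quadric $\alpha_s^\perp/\alpha_s$, a non-singular quadric of the same type as $\Q_n$ whose rank is smaller by $s+1$. Reading off its generator count gives the leading product $N_A$ in each formula; for example $\alpha_s^\perp/\alpha_s$ is elliptic of the form $\E_{2(r-s-1)+1}$, with $(2^2+1)\cdots(2^{r-s}+1)$ generators. Setting $s=0$ recovers the generator count of $\Q_n$ itself, consistent with parts (a) and with $\Gamma_0\cong\Gamma$.

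Next I would count the Class B cliques. The key structural fact, already established inside the proof of Lemma~\ref{classify-cliq}, is that a Class B clique $\C$ determines an ordered pair of generators $(\Sigma,\Pi)$ uniquely: $\Sigma$ is the generator containing $\alpha_s$ together with the type-(ii) vertices of $\C$, while $\Pi$ is the generator containing the type-(i) and type-(iii) vertices, and they satisfy $\alpha_s\subseteq\Sigma$, $\alpha_s\not\subseteq\Pi$, and $\dim(\Sigma\cap\Pi)=g-1$. Conversely Lemma~\ref{g-cliq-1}(B) produces a Class B clique from any such pair. Hence the Class B cliques are in bijection with the ordered pairs $(\Sigma,\Pi)$ of generators with these three properties, and I would count these by fixing a generator $\Sigma$ through $\alpha_s$ and counting the admissible $\Pi$; the resulting count $M$ does not depend on the choice of $\Sigma$, so the number of Class B cliques is $N_A\cdot M$.

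For the local count, a generator $\Pi\ne\Sigma$ meets $\Sigma$ in a $(g-1)$-space iff $\Pi\cap\Sigma$ is a hyperplane $H$ of $\Sigma\cong\PG(g,2)$; there are $2^{g+1}-1$ such hyperplanes, and through each the number of generators other than $\Sigma$ is one less than the rank-one point count recorded above. Since $\alpha_s\subseteq\Sigma$, such a $\Pi$ contains $\alpha_s$ exactly when $\alpha_s\subseteq H$, and the hyperplanes of $\Sigma$ through $\alpha_s$ are the hyperplanes of $\Sigma/\alpha_s\cong\PG(g-s-1,2)$, of which there are $2^{g-s}-1$. Subtracting the forbidden $\Pi$ gives $M=2^{r+2}-2^{r-s+1}$ in the elliptic case and $M=2^{r+1}-2^{r-s}$ in the hyperbolic and parabolic cases, so that $1+M$ is exactly the second factor in each formula and the total $N_A(1+M)$ matches the stated products. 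The main obstacle is the careful bookkeeping here: pinning down which $\Pi$ meeting $\Sigma$ in a hyperplane actually contain $\alpha_s$ (equivalently that $\alpha_s\subseteq\Sigma\cap\Pi$), and inserting the correct rank-one quotient point count for each of the three quadric types.
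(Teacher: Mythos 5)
Your proposal is correct and follows essentially the same route as the paper: invoke the classification of maximal cliques into Classes A and B, count Class A as the generators through $\alpha_s$, and count Class B via ordered pairs $(\Sigma,\Pi)$ by multiplying the $2^{g+1}-2^{g-s}$ admissible hyperplanes of $\Sigma$ by the number of generators through a fixed $(g-1)$-space other than $\Sigma$ (namely $4$, $1$, $2$ in the elliptic, hyperbolic, parabolic cases), exactly as in the paper's proof. The only cosmetic differences are that you derive the generator counts and the per-hyperplane multiplicities from quotient polar spaces $\alpha_s^\perp/\alpha_s$ and $H^\perp/H$ rather than citing \cite[Theorem 22.4.7]{HT} and \cite[Lemma 22.4.8]{HT}, and that you make explicit the bijection between Class B cliques and pairs $(\Sigma,\Pi)$, which the paper leaves implicit.
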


\begin{proof} For part 1, we work in $\PG(2r+1,2)$ and let $\Q_n=\E=\E_{2r+1}$ have point-graph $\Gamma$.  
The maximal cliques of $\Gamma$ correspond exactly to the generators of $\E$. By \cite[Theorem 22.5.1]{HT},  the number  of generators of $\E$ is  $$(2^2+1)(2^3+1)\cdots(2^{r+1}+1)$$  proving 1(a).
For part 1(b), 
let $\alpha_s$ be a subspace of $\E$, $0\leq s<g$,  and let $\Gamma_s$ be the graph constructed from $\Gamma$  as in Theorem~\ref{main-thm}. Let $n_{\rm A}$, $n_{\rm B}$ be  the number of maximal cliques of $\Gamma_s$ of Class A and B respectively. 
By Lemma~\ref{g-cliq-1}, $n_{\rm A}$ is equal to the number of generators of $\E$ that contain $\alpha_s$, and so by \cite[Theorem 22.4.7]{HT}, 
\begin{equation}\label{eqn-nsA} n_{\rm A}=(2^2+1)(2^3+1)\cdots(2^{r-s}+1).\end{equation}
To count the maximal cliques of Class B, by Lemma~\ref{g-cliq-1}
we need to count the number of pairs of generators $\Sigma,\Pi$ of $\E$ such that $\Sigma$ contains $\alpha_s$, and $\Pi$ meets $\Sigma$ in a $(g-1)$-space not containing $\alpha_s$.
The number of choices for $\Sigma$ is the number of generators of $\E$ that contain $\alpha_s$, which is given in  (\ref{eqn-nsA}), and  is $n_{\rm A}$. Once $\Sigma$ is chosen, we count the number of choices for $\Pi$. The number of $(g-1)$-spaces contained in $\Sigma$ but not containing $\alpha_s$ equals the number of $(g-1)$-spaces contained in $\Sigma$ minus the number of $(g-1)$-spaces contained in $\Sigma$ which contain $\alpha_s$. This is $(2^{g+1}-1)-(2^{g-s}-1)=2^{g+1}-2^{g-s}$. By \cite[Lemma 22.4.8]{HT}, the number of generators of $\E$ that meet $\Sigma$ in a fixed $(g-1)$-space is four. Hence the number of choices for $\Pi$ is  $(2^{g+1}-2^{g-s})\times4=2^{g+3}-2^{g-s+2}$. As the projective index of $\E$ is $g=r-1$, we have   $n_{\rm B}=n_{\rm A}\big(2^{g+3}-2^{g-s+2}\big)= n_{\rm A}\big(2^{r+2}-2^{r-s+1}\big)$. Hence the total number of maximal cliques of $\Gamma_s$ is $n_{\rm A}+n_{\rm B}=
n_{\rm A}\big(2^{r+2}-2^{r-s+1}+1\big)$ as required. This completes the proof of part 1. The proofs of parts 2 and 3 are  similar.
\end{proof}

\begin{theorem}\Label{thm-max-cliq-thru-pt}  Let $\Q_n$ be a non-singular quadric in $\PG(n,2)$ of projective index $g\geq1$. Let $\Gamma_s$, $0\leq s<g$,
 be the graph constructed in Theorem~{\rm \ref{main-thm}}.
Let $X$ be a fixed vertex of $\Gamma_s$,  then  the number of maximal cliques of $\Gamma_s$ containing $X$ according to the type of $X$ is given in the next table. 

 {\rm
\begin{tabular}{c|c| l| l}
&&number of maximal cliques of $\Gamma_s$ containing $X$ \\

$\Q_n$& type of $X$& $0\leq s<g-1$&$s=g-1$\\ \hline

$\E_{2r+1}$&  {\rm (i)}& $(2^2+1)(2^3+1)\cdots(2^{r-s}+1)\times\big(2^{r+1}-2^{r-s+1}+1\big)$&$5(2^{r+1}-7)$\\
&  {\rm (ii)}&  $(2^2+1)(2^3+1)\cdots(2^{r-s-1}+1)\times\big(2^{r+1}-2^{r-s}+1\big)$&$2^{r+1}-3$\\
& {\rm  (iii)}&  $(2^2+1)(2^3+1)\cdots(2^{r-s}+1)$& 5\\
\hline

$\H_{2r+1}$& (i)&  $(2^0+1)(2^1+1)\cdots(2^{r-s-1}+1)\times\big(2^{r}-2^{r-s}+1\big)$& $2(2^r-1)$\\
& (ii)&  $(2^0+1)(2^1+1)\cdots(2^{r-s-2}+1)\times\big(2^{r}-2^{r-s-1}+1\big)$&$2^r$\\
& (iii)&  $(2^0+1)(2^1+1)\cdots(2^{r-s-1}+1)$&$2$
\\
\hline
$\P_{2r}$ &(i)&  $(2^1+1)(2^2+1)\cdots(2^{r-s-1}+1)\times\big(2^{r}-2^{r-s}+1\big)$&$3(2^r-3)$\\
& (ii)& $(2^1+1)(2^2+1)\cdots(2^{r-s-2}+1)\times\big(2^{r}-2^{r-s-1}+1\big)$&$2^r-1$\\
& (iii)&$(2^1+1)(2^2+1)\cdots(2^{r-s-1}+1)$&3\\
\end{tabular}
}\end{theorem}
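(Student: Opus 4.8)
The plan is to count, separately for each type of the vertex $X$, the Class A and Class B maximal cliques through $X$, using the classification in Theorem~\ref{max-cl} together with the key fact established in the proof of Lemma~\ref{classify-cliq}: a Class B clique determines its defining pair of generators $(\Pi,\Sigma)$ uniquely, where $\Sigma\supseteq\alpha_s$, $\Pi\not\supseteq\alpha_s$ and $\dim(\Pi\cap\Sigma)=g-1$. Write $a_t$ for the number of generators of $\Q_n$ through a fixed $t$-space contained in $\Q_n$; by \cite[Theorem~22.4.7]{HT}, as in (\ref{eqn-nsA}), this is the product appearing as $n_{\rm A}$ with $s$ replaced by $t$. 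Let $c$ be the number of generators other than a fixed generator $\Sigma$ that meet $\Sigma$ in a prescribed $(g-1)$-space; as recorded in the proof of Theorem~\ref{thm-max-cliq}, $c=4$ for $\Er$, and the analogous counts from \cite[Lemma~22.4.8]{HT} give $c=2$ for $\Pr$ and $c=1$ for $\Hr$.

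First I would treat type \tthree. A point $R$ of type \tthree\ lies in no generator through $\alpha_s$ (otherwise $\langle\alpha_s,R\rangle\subseteq\Q_n$ would make $R$ type \ttwo), so $R$ is in no Class A clique; and in any Class B clique $R\in\C_c=\Pi\setminus\Sigma$. I would show that the assignment $\Sigma\mapsto$ (the Class B clique with $R\in\C_c$) is a bijection from the generators $\Sigma$ through $\alpha_s$ to the Class B cliques through $R$: given such a $\Sigma$ (with $R\notin\Sigma$), Result~\ref{property-star} provides a unique generator $\Pi$ through $R$ meeting $\Sigma$ in a $(g-1)$-space, and $\Pi\not\supseteq\alpha_s$ automatically; conversely the $\Pi$ of any Class B clique through $R$ is forced to be this unique generator. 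Hence the number of cliques through $R$ equals $a_s$, which is the asserted value.

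For type \tone, a point $P\in\alpha_s$ lies in every Class A clique, contributing $a_s$. For Class B, $P\in\C_a=\alpha_s\cap\Pi$ is equivalent to $P\in\Pi\cap\Sigma$, so I would count triples $(\Sigma,H,\Pi)$ with $\Sigma\supseteq\alpha_s$, $H$ a $(g-1)$-space of $\Sigma$ satisfying $P\in H$ and $\alpha_s\not\subseteq H$, and $\Pi$ one of the $c$ generators $\neq\Sigma$ meeting $\Sigma$ in $H$ (each such $\Pi$ avoids $\alpha_s$, since $\alpha_s\subseteq\Pi$ would force $\alpha_s\subseteq H$). Counting hyperplanes of $\Sigma\cong\PG(g,2)$ through $P$ and subtracting those through $\alpha_s$ gives $(2^g-1)-(2^{g-s}-1)=2^g-2^{g-s}$ choices of $H$, so there are $a_s\,c\,(2^g-2^{g-s})$ Class B cliques through $P$. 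For type \ttwo, the $(s+1)$-space $\langle\alpha_s,Q\rangle$ is contained in $\Q_n$, and the Class A cliques through $Q$ are exactly the generators through it, giving $a_{s+1}$; while $Q\in\C_b=\Sigma\setminus(\alpha_s\cup\Pi)$ forces $\Sigma\supseteq\langle\alpha_s,Q\rangle$ and $Q\notin H=\Sigma\cap\Pi$, and an inclusion--exclusion over hyperplanes of $\Sigma$ (total $2^{g+1}-1$, minus those through $\alpha_s$ and through $Q$, plus those through $\langle\alpha_s,Q\rangle$) counts the admissible $H$ as $2^g-2^{g-s-1}$, each extending in $c$ ways to $\Pi$. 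Adding the Class A counts and substituting the values of $g$, $c$ and $a_t$ for each of $\Er,\Hr,\Pr$ yields the table; the $s=g-1$ column is the specialisation (where $a_{s+1}$ degenerates to an empty product equal to $1$).

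The main obstacle is guaranteeing there is no double counting: the entire argument hinges on each Class B clique being recovered exactly once, which rests on the uniqueness of the pair $(\Pi,\Sigma)$ from Lemma~\ref{classify-cliq} together with the rigidity of $\Pi$ given $(\Sigma,R)$, and of $H=\Pi\cap\Sigma$, supplied by Result~\ref{property-star}. Once these bijections are pinned down, the remaining work—the hyperplane counts in $\PG(g,2)$ and the final arithmetic substitutions of $a_t$, $c$ and $g$ for each quadric type—is routine.
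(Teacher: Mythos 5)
Your proposal is correct and follows essentially the same route as the paper's proof: for each vertex type you count Class A cliques via generators through $\alpha_s$ (or through $\langle\alpha_s,Q\rangle$), and Class B cliques by counting pairs $(\Sigma,H)$ with the same hyperplane counts $2^g-2^{g-s}$ and $2^g-2^{g-s-1}$ in $\Sigma$, multiplied by the number of generators meeting $\Sigma$ exactly in $H$ from \cite[Lemma 22.4.8]{HT}, and for type \tthree\ you invoke Result~\ref{property-star} to get the unique $\Pi$ per $\Sigma$, exactly as the paper does. The only cosmetic difference is that you treat $\Er$, $\Hr$, $\Pr$ uniformly with the parameter $c\in\{4,2,1\}$ (correctly identified) and make the no-double-counting bijection explicit, whereas the paper works the elliptic case in detail and declares the others similar.
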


\begin{proof}
First consider the case where  
$\Q_n=\E=\E_{2r+1}$
 in $\PG(n,2)=\PG(2r+1,2)$. Let $\alpha_s$ be a subspace of $\E$, $0\leq s<g$, and let $\Gamma_s$ be the graph constructed from the point-graph $\Gamma$ of $\E$,  as in Theorem~\ref{main-thm}.  Let $P$ be a vertex of $\Gamma_s$ of type (i), so in $\PG(2r+1,2)$, $P\in\alpha_s$. All the maximal cliques of $\Gamma_s$ of Class A
contain $\alpha_s$. So by (\ref{eqn-nsA}), $P$ lies in $n_{\rm A}=(2^2+1)(2^3+1)\cdots(2^{r-s}+1)$ maximal cliques of Class A. To form a maximal clique of $\Gamma_s$ of Class B that contains $P$, we need two generators $\Sigma,\Pi$ of $\E$ such that $\Sigma$ contains $\alpha_s$, $\Pi$ meets $\Sigma$ in a $(g-1)$-space not containing $\alpha_s$, and $P\in\Pi$. We count the number of pairs $\Sigma$, $\Pi$ satisfying this. First, the number of choices for $\Sigma$ equals the number of generators of $\E$ containing $\alpha_s$ which is $n_{\rm A}$. The number of $(g-1)$-spaces of $\Sigma$ that contain $P$ is $2^g-1$, and the number of $(g-1)$-spaces of $\Sigma$ that contain $\alpha_s$ and $P$ is $2^{g-s}-1$. Hence the number of $(g-1)$-spaces of $\Sigma$ that contain $P$, but do not contain $\alpha_s$ is $(2^g-1)-(2^{g-s}-1)=2^g-2^{g-s}$. By \cite[Lemma 22.4.8]{HT}, the number of generators of $\E$ that meet $\Sigma$ in a fixed $(g-1)$-space is four.  In total, the number of maximal cliques of Class B containing $P$ is $n_{\rm A}\times(2^g-2^{g-s})\times4=n_{\rm A}\big(2^{r+1}-2^{r-s+1}\big)$ as $\E$ has projective index $g=r-1$. Hence the total number of maximal cliques of $\Gamma_s$ containing 
$P$ is $n_{\rm A}\big(2^{r+1}-2^{r-s+1}+1\big)$ as required.

Now let $Q$ be   a vertex of $\Gamma_s$  of type (ii). The number of maximal cliques of Class A containing $Q$ equals the number of generators of $\E$ containing $\alpha_s$ and $Q$ which by \cite[Theorem 22.4.7]{HT} is  $(2^2+1)(2^3+1)\cdots(2^{r-s-1}+1)$. To count the maximal cliques of $\Gamma_s$ that contain $Q$, we need to count pairs of generators $\Sigma,\Pi$ of $\E$ such that $\Sigma$ contains $\alpha_s$ and $Q$, and $\Pi$ meets $\Sigma$ in a $(g-1)$-space not containing $\alpha_s$ or $Q$.
The number of choices for $\Sigma$ is calculated above to be $(2^2+1)(2^3+1)\cdots(2^{r-s-1}+1)$. Further, the number of $(g-1)$-spaces in $\Sigma$ is $2^{g+1}-1$; the number of $(g-1)$-spaces of $\Sigma$ containing $\alpha_s$ is $2^{g-s}-1$; 
the number of $(g-1)$-spaces of $\Sigma$ containing $\alpha_s$ and $Q$ is $2^{g-s-1}-1$; and 
the number of $(g-1)$-spaces of $\Sigma$ containing $Q$ is $2^{g}-1$.
Hence the number of $(g-1)$-spaces of $\Sigma$ that do not contain $\alpha_s$ and do not contain $Q$ is $(2^{g+1}-1)-(2^{g-s}-1)-(2^g-1)+(2^{g-s-1}-1)=2^g-2^{g-s-1}$. As before, each of these $(g-1)$-spaces lies in  four suitable choices for the generator $\Pi$ of $\E$. Hence the number of maximal cliques of Class B containing $Q$ is  $(2^2+1)(2^3+1)\cdots(2^{r-s-1}+1)\times (2^g-2^{g-s-1})\times 4 = (2^2+1)(2^3+1)\cdots(2^{r-s-1}+1)\big(2^{r+1}-2^{r-s}\big)$ as $\E$ has projective index $g=r-1$. Hence the total number of maximal cliques containing $Q$ is 
$(2^2+1)(2^3+1)\cdots(2^{r-s-1}+1)\big(2^{r+1}-2^{r-s}+1\big)$ as required.

Let $R$ be   a vertex of $\Gamma_s$ of type (iii), so $\langle R,\alpha_s\rangle$ is not contained in $\E$, hence $R$ is in zero maximal cliques of Class A. To count the maximal cliques of $\Gamma_s$ of Class B containing $R$, we  need to count pairs of generators $\Sigma,\Pi$ of $\E$ such that $\Sigma$ contains $\alpha_s$, $\Pi$ meets $\Sigma$ in a $(g-1)$-space not containing $\alpha_s$, and $\Pi$ contains $R$. The number of choices for $\Sigma$ equals the number of generators of $\E$ containing $\alpha_s$ which is $n_{\rm A}$ by (\ref{eqn-nsA}).  As $\Sigma$ contains $\alpha_s$, it contains no points of type (iii), so $R\notin\Sigma$. So by  Result~\ref{property-star}, there is a unique generator of $\E$ that contains $R$ and meets $\Sigma$ in a $(g-1)$-space denoted $H$. Further, if $H$ contained $\alpha_s$, then $\langle R,\alpha_s\rangle$ would be contained in $\E$, and so $R$ would be type (ii), a contradiction, so $H$ does not contain $\alpha_s$. So for each $\Sigma$, there is a unique choice for $\Pi$ that can be used to form a Class B maximal clique containing $R$. Hence the number of maximal cliques of $\Gamma_s$ containing $R$ is $n_{\rm A}=(2^2+1)(2^3+1)\cdots(2^{r-s}+1)$ as required.  This completes the proof for the case $\Q_n=\E_{2r+1}$. The cases when $\Q_n$ is $\H_{2r+1}$ and $\P_{2r}$ are    similar.
%
 \end{proof}

\section{The graphs $\Gamma_s$ are all  non-isomorphic}\Label{sec-new}

\begin{theorem}\Label{thm:s0iso}  Let $\Q_n$ be a non-singular quadric in $\PG(n,2)$ of projective index $g\geq1$. Let $\Gamma$ be the point-graph of $\Q_n$ and let $\Gamma_s$, $0\leq s<g$,
 be the graph constructed in Theorem~\ref{main-thm}.
 Then $\Gamma_s$ is isomorphic to $\Gamma$ if and only if $s=0$.
\end{theorem}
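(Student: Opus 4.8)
The plan is to prove the two implications separately, using the maximal-clique counts of Section~\ref{sec:max-count} for the forward direction and an explicit isomorphism for the reverse.

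For the direction $s\neq 0\Rightarrow\Gamma_s\not\cong\Gamma$, I would use the total number of maximal cliques as an isomorphism invariant. By Theorem~\ref{thm-max-cliq}, both $\Gamma$ and $\Gamma_s$ have explicitly computed numbers of maximal cliques. A short calculation shows these coincide when $s=0$ (in the elliptic case the factor $2^{r+2}-2^{r-s+1}+1$ collapses to the missing product factor $2^{r+1}+1$ at $s=0$, and similarly for $\H_{2r+1}$ and $\P_{2r}$), whereas for every $s\geq1$ the count for $\Gamma$ strictly exceeds that for $\Gamma_s$. Indeed, writing the ratio of the two counts as a quotient of a product of $s+1\geq2$ factors of the form $2^i+1$ over the single term $2^{r+2}-2^{r-s+1}+1$, the product of the two largest factors already exceeds that denominator, so the ratio is greater than $1$. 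Since the number of maximal cliques is preserved by graph isomorphism, this gives $\Gamma_s\not\cong\Gamma$ for all $s\geq1$. As an alternative invariant, Theorem~\ref{thm-max-cliq-thru-pt} shows that for $s\geq1$ the number of maximal cliques through a vertex already differs between type (ii) and type (iii) vertices, so that number is not constant on $\Gamma_s$, whereas $\Gamma$ is vertex-transitive and hence has it constant.

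For the direction $s=0\Rightarrow\Gamma_0\cong\Gamma$, I would exhibit an explicit isomorphism. Work in $V=\GF(2)^{n+1}$ with the quadratic form cutting out $\Q_n$ and its polar form $B$; for distinct $x,y\in\Q_n$ the line $xy$ lies in $\Q_n$ exactly when its third point $x+y$ lies on $\Q_n$, i.e.\ when $B(x,y)=0$, so $x\sim y$ in $\Gamma$ iff $B(x,y)=0$. Let $p$ represent $\alpha_0=\langle p\rangle$. Then type (ii) is $\{x\in\Q_n\st B(x,p)=0,\ x\neq p\}$ and type (iii) is $\{x\in\Q_n\st B(x,p)=1\}$, and by Corollary~\ref{cor-edges} the graph $\Gamma_0$ agrees with $\Gamma$ except that adjacency is complemented on pairs with one vertex of type (ii) and one of type (iii). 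Define $\phi\colon\Q_n\to\Q_n$ by $\phi(x)=x+p$ when $x$ has type (ii) and $\phi(x)=x$ otherwise. Using $B(x,p)=0$ one checks that $x+p\in\Q_n$ and is again of type (ii) when $x$ is, so $\phi$ is a well-defined involution of the vertex set. The verification that $\phi$ is an isomorphism $\Gamma_0\to\Gamma$ amounts to comparing $B(\phi x,\phi y)$ with the $\Gamma_0$-adjacency in the six type-combinations; the only nontrivial identities are, for $Q,Q'$ of type (ii) and $R$ of type (iii), $B(Q+p,Q'+p)=B(Q,Q')$ (which preserves type (ii)--type (ii) adjacency) and $B(Q+p,R)=B(Q,R)+B(p,R)=B(Q,R)+1$ (which effects exactly the required complementation, since $B(p,R)=1$).

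The main obstacle is locating the map $\phi$; once it is written down the checks are routine bilinear-form computations. The difficulty is that $\phi$ cannot be induced by a collineation of $\PG(n,2)$: any collineation fixing $\Q_n$ lies in the orthogonal group, hence preserves $B$ and therefore every adjacency of $\Gamma$, so it could never reverse the edge-switching that distinguishes $\Gamma_0$ from $\Gamma$. The resolution is the non-geometric, type-dependent bijection above, which translates the type (ii) points by $p$ while fixing the type (i) and type (iii) points; it is merely a permutation of the point set of $\Q_n$ and does not extend to a map of $\PG(n,2)$.
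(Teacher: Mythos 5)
Your proof is correct and follows essentially the same route as the paper: non-isomorphism for $s\geq 1$ via the maximal-clique counts of Theorem~\ref{thm-max-cliq} (with the same ``product of factors $2^i+1$ outgrows the single term'' estimate), and for $s=0$ an explicit involution that is precisely the paper's map sending a type (ii) vertex $Q$ to the third point of $\Q_n$ on the line $PQ$, since in $\GF(2)$-coordinates that third point is exactly $Q+p$. The only differences are presentational: you verify the six adjacency cases by bilinear-form identities such as $B(Q+p,Q'+p)=B(Q,Q')$ and $B(Q+p,R)=B(Q,R)+1$, where the paper argues synthetically via Result~\ref{quadric-deg}, and your fallback invariant (non-constancy of the number of maximal cliques through a vertex, against vertex-transitivity of $\Gamma$) is a valid alternative the paper only uses later, in Lemma~\ref{lem:diffcliq}.
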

\begin{proof}
 We first show that $\Gamma_0\cong\Gamma$. 
To construct $\Gamma_0$ from $\Gamma$, we let $\alpha_0$ be a subspace of $\Q_n$ of dimension $0$, so $\alpha_0$ is a point which we denote $P$. We classify the points of $\Q_n$, and so the vertices of $\Gamma$, into type (i), (ii), (iii) with respect to $\alpha_0=P$. The point $P$ is the only point of $\Q_n$ of type (i). Note that lines in $\PG(n,2)$ contain exactly three points.  
Consider the involution $\phi$ acting on the vertices of $\Gamma$ where:  $\phi$ fixes vertices of type (i) and (iii); and $\phi$ maps a vertex $Q$ of type (ii) to the vertex of type (ii) that corresponds to the third point of $\Q_n$ on the line $PQ$. The involution $\phi$ maps $\Gamma$ to a graph $\Gamma'$.
 Incidence in $\Gamma'$ is inherited from $\Gamma$, that is, points $X$ and $Y$ are adjacent in $\Gamma$ (so $XY$ is a line of $\Q_n$) if and only if vertices $\phi(X)$ and $\phi(Y)$ are adjacent in $\Gamma'$. The map $\phi$ is an isomorphism, so $\Gamma\cong\Gamma'$. We 
now show that $\Gamma'=\Gamma_0$. 

By Corollary~\ref{cor-edges}, we need to show that the edges of $\Gamma'$ satisfy Table~\ref{table-edges}. 
First note that as there is only one point of type (i) in $\Q_n$, the first row of Table~\ref{table-edges} is not relevant.
Let  $Q_1,Q_2$ be points  of $\Q_n$ of type (ii), and let $R,R'$   be points  of $\Q_n$    of type (iii). 
The incidences in rows 4 and 5 of  Table~\ref{table-edges} hold in $\Gamma$, so as $\phi$ fixes points of type (i) and (iii), they also hold in $\Gamma'$. 

To simplify notation, let $Q_1'=\phi^{-1}(Q_1)$ and  $\phi^{-1}(Q_2)=Q_2'$. 
Consider row 2 of  Table~\ref{table-edges}: 
$\{P,Q_1\}$ is an edge of $\Gamma'$  if and only if $\{P,Q_1'\}$ is an edge of $\Gamma$ if and only if $\{P,Q_1,Q_1'\}$ is a line of $\Q_n$. Hence it follows from the definition of $\phi$ that $\{P,Q_1\}$ is always an edge of $\Gamma'$ as required. 

Consider row 6  of  Table~\ref{table-edges}:  
 $\{Q_1,R\}$ is an edge of $\Gamma'$  if $\{Q_1',R\}$ is an edge of $\Gamma$, that is, if 
$Q_1'R$ is a line of $\Q_n$. As $R$ is type (iii), the plane $\langle P,Q_1',R\rangle$ is not contained in $\Q_n$, and so by Result~\ref{quadric-deg} meets $\Q_n$ in exactly the lines $PQ_1'$, $Q_1'R$. 
As $Q_1$ is the third point on the line $PQ_1'$, the line  
 $Q_1R$ is a 2-secant of $\Q_n$ as required. 

Consider row 3  of  Table~\ref{table-edges}. 
Suppose $\{Q_1,Q_2\}$ is an edge of $\Gamma'$, so $\{Q_1',Q_2'\}$ is an edge of $\Gamma$. If the line $Q_1Q_2$ contains $P$, then $Q_1'=Q_2$ and $Q_2'=Q_1$, so $\{Q_1,Q_2\}$ is an edge of $\Gamma$ and so $Q_1Q_2$ is a line of $\Q_n$ as required. Now suppose $Q_1Q_2$ does not contain $P$. Then 
$\{Q_1',Q_2'\}$ an edge of $\Gamma$ implies $Q_1'Q_2'$ is a line of $\Q_n$. Hence the plane 
$\langle P,Q_1',Q_2'\rangle$ contains at least three lines, namely $PQ_1'$, $PQ_2'$ and $Q_1'Q_2'$, and so by Result~\ref{quadric-deg}, is contained in  $\Q_n$.  Further, it contains  $Q_1$ and $Q_2$, so $Q_1Q_2$ is a line of $\Q_n$ as required.
Hence the edges of $\Gamma'$ satisfy Table~\ref{table-edges}. So by Corollary~\ref{cor-edges}, $\Gamma'=\Gamma_0$.

We now show that $\Gamma_s$ with $1\leq s<g$ is not isomorphic to  the graph $\Gamma\cong\Gamma_0$ by considering the maximal cliques.
We prove the case when 
$\Q_n=\E=\E_{2r+1}$,  the cases where $\Q_n$ is $\H_{2r+1}$ or $\P_{2r}$ are similar.
The number of maximal cliques in $\Gamma$ and $\Gamma_s$ are given in (1a) and (1b) of Theorem~\ref{thm-max-cliq}. 
These numbers are equal if 
 and only if $2^{r+1}-2^{r-s+1}+1=(2^{r-s+1}+1)\cdots(2^{r}+1)$.
 If $s\geq 1$, then the right hand side is $\geq 2^{2r+1}$, which is larger than the left hand side.  So we have equality if and only if  $s=0$. 
Hence $\Gamma_s$ with $1\leq s<g$ is not isomorphic to $\Gamma$. \end{proof}

\begin{theorem}  Let $\Q_n$ be a non-singular quadric in $\PG(n,2)$ of projective index $g\geq1$. Let $\Gamma$ be the point-graph of $\Q_n$ and let $\Gamma_s$, $0\leq s<g$,
 be the graph constructed in Theorem~\ref{main-thm}.
Then the graphs
$\Gamma_0,\Gamma_1,\ldots,\Gamma_{g-1}$ are distinct up to isomorphism.
\end{theorem}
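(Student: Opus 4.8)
The plan is to use the number of maximal cliques as a graph isomorphism invariant. Any isomorphism carries maximal cliques to maximal cliques bijectively, so $\Gamma_s$ and $\Gamma_{s'}$ can be isomorphic only when they have the same number of maximal cliques. Theorem~\ref{thm-max-cliq} supplies an explicit closed form for this number for each of the three quadric types, so it suffices to show that, for a fixed quadric $\Q_n$ (hence fixed $r$), this count is a strictly monotonic—and in particular injective—function of $s$ on the range $0\le s<g$.

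First I would treat the elliptic case $\Q_n=\E_{2r+1}$, writing $N(s)=\prod_{i=2}^{r-s}(2^i+1)\cdot(2^{r+2}-2^{r-s+1}+1)$ for the count from Theorem~\ref{thm-max-cliq}(1b). The natural move is to compare consecutive values by forming the ratio
\[
\frac{N(s)}{N(s+1)} = (2^{r-s}+1)\cdot\frac{2^{r+2}-2^{r-s+1}+1}{2^{r+2}-2^{r-s}+1},
\]
since passing from $s+1$ to $s$ reinstates exactly the factor $(2^{r-s}+1)$ in the product and shifts the remaining linear factor by one step. Clearing denominators, the inequality $N(s)>N(s+1)$ reduces, after cancellation, to comparing two powers of two, namely $2^{(r-s)+r+2}>2^{2(r-s)+1}$; the exponent difference is $r-(r-s)+1=s+1>0$. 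Hence $N(s)$ is strictly decreasing in $s$, so the values $N(0)>N(1)>\cdots>N(g-1)$ are pairwise distinct and the corresponding graphs are pairwise non-isomorphic.

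For the hyperbolic and parabolic cases the same method applies; in fact the relevant ratio
\[
\frac{N(s)}{N(s+1)} = (2^{r-s-1}+1)\cdot\frac{2^{r+1}-2^{r-s}+1}{2^{r+1}-2^{r-s-1}+1}
\]
is identical for $\H_{2r+1}$ and $\P_{2r}$, because their counts differ only by the constant first factor $(2^0+1)$ present in the hyperbolic product and absent in the parabolic one, and this constant cancels in the ratio. Clearing denominators reduces the inequality to $2^{(r-s)+r}>2^{2(r-s)-1}$, once more with positive exponent difference $s+1$. I would simply record these two computations and conclude strict monotonicity, hence injectivity, in each case.

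The argument is mostly routine once the ratio is set up; the only point requiring care is the bookkeeping of which factors survive in $N(s)/N(s+1)$ and the correct expansion when clearing denominators, so that the reduction genuinely collapses to a single comparison of two powers of $2$. There is no real conceptual obstacle, since strict monotonicity of such an explicit product-times-linear expression is forced by the sign of $s+1$; the work lies entirely in verifying the cancellation, and I would check the boundary value $s=g-1$ (where the product degenerates to a single factor) separately to be safe. Consistency with Theorem~\ref{thm:s0iso} is automatic: $N(0)$ equals the count for $\Gamma$, as it must since $\Gamma_0\cong\Gamma$.
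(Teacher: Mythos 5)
Your proposal is correct and takes essentially the same route as the paper: both use the maximal-clique counts of Theorem~\ref{thm-max-cliq} as the isomorphism invariant and show these counts are pairwise distinct over $0\le s<g$ (the paper by directly bounding the equality condition for arbitrary $s_1<s_2$, you by strict monotonicity of consecutive ratios). Your ratio computations check out — in each case the cleared inequality collapses to $2^{r-s}\bigl(2^{r+2}-2^{r-s+1}\bigr)>0$ (elliptic) or $2^{r-s-1}\bigl(2^{r+1}-2^{r-s}\bigr)>0$ (hyperbolic/parabolic), exactly the power-of-two comparisons with exponent gap $s+1$ that you state.
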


\begin{proof} We prove the case when 
$\Q_n=\E=\E_{2r+1}$,  the cases where $\Q_n$ is $\H_{2r+1}$ or $\P_{2r}$ are similar.
Let $s_1,s_2$ be two integers with $0\leq s_1<s_2<g$. The number of maximal cliques in $\Gamma_{s_1}$ and $\Gamma_{s_2}$ are given in Theorem~\ref{thm-max-cliq}(1b). These two numbers are equal if and only if 
\begin{eqnarray}
2^{r+2}-2^{r-s_2+1}+1&=&(2^{r-s_2+1}+1)\cdots(2^{r-s_1}+1)
\big( 2^{r+2}-2^{r-s_1+1}+1\big).\label{eqn:non-iso}
\end{eqnarray}
As $s_1<s_2$, the right hand side is greater than 
$2^{2r+2-s_1}$, which is greater than $2^{r+1}$ as  $s_1<s_2<g=r-1$. Hence the right hand side is greater than the left, so they cannot be equal. 
Thus $\Gamma_{s_1}$ and $\Gamma_{s_2}$ are not isomorphic  if  $s_1$ and $s_2$ are distinct. 
\end{proof}


\subsection{Kantor's graphs}\Label{sec:kantor}

In \cite{kantor}, Kantor constructs a strongly regular graph $\Gamma_K$ from a non-singular quadric $\Q_n$ in $\PG(n,q)$ with the same parameters as the point-graph $\Gamma$ of $\Q_n$. 
Kantor conjects that the graph $\Gamma_K$ is  not the same as $\Gamma$ except in the case when $\Q_n=\H_7$.  It is not known  in general whether $\Gamma_K$ is isomorphic to $\Gamma\cong\Gamma_0$.  We show that $\Gamma_K$ is not isomorphic to the graphs $\Gamma_s$ when $s>0$. Kantor's construction works when the quadric $\Q_n$ contains a spread, however, we do not need to describe the details of Kantor's graphs to prove non-isomorphism.  

\begin{theorem} Let $\Q_n$ be a non-singular quadric in $\PG(n,2)$ of projective index $g\geq1$. Let $\Gamma_s$, $0<s<g$ be the graph constructed in Theorem~\ref{main-thm}. Let $\Gamma_K$ be the graph constructed from $\Q_n$ in \cite{kantor}. Then $\Gamma_K$ is not isomorphic to $\Gamma_s$, $0<s<g$. \end{theorem}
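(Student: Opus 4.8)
The plan is to separate $\Gamma_K$ from the graphs $\Gamma_s$, $s>0$, by a purely graph-theoretic invariant attached to each vertex, namely the number of maximal cliques through it, and to exploit the asymmetry of $\Gamma_s$ recorded in Theorem~\ref{thm-max-cliq-thru-pt}. Since the internal structure of Kantor's construction is not to be invoked, the only external input I would use is that the spread-based construction of \cite{kantor} produces a graph $\Gamma_K$ whose automorphism group is transitive on vertices.

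First I would isolate the invariant. For a finite graph $\Delta$ and a vertex $v$, write $m_\Delta(v)$ for the number of maximal cliques of $\Delta$ containing $v$. Any isomorphism $\phi\colon\Delta\to\Delta'$ carries cliques to cliques and respects inclusion, hence carries maximal cliques bijectively to maximal cliques and satisfies $m_\Delta(v)=m_{\Delta'}(\phi(v))$; thus the multiset $\{\,m_\Delta(v):v\,\}$ is an isomorphism invariant. In particular, if $\Delta$ is vertex-transitive then $m_\Delta$ is constant on the vertices of $\Delta$.

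Next I would show that $\Gamma_s$ is not vertex-transitive when $s>0$. By Theorem~\ref{max-cl} the maximal cliques of $\Gamma_s$ are exactly the $g$-cliques of Class A and B, and Theorem~\ref{thm-max-cliq-thru-pt} evaluates $m_{\Gamma_s}(X)$ explicitly according to the type of $X$. Comparing the rows of that table, in each of the three cases $\E_{2r+1}$, $\H_{2r+1}$, $\P_{2r}$ the count for a vertex of type (iii) is the count for a vertex of type (i) divided by a factor strictly greater than $1$ (the factor is $2^{r+1}-2^{r-s+1}+1$, respectively $2^{r}-2^{r-s}+1$, which exceeds $1$ whenever $s>0$). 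Since both type (i) and type (iii) vertices exist for $0<s<g$, the function $m_{\Gamma_s}$ takes at least two distinct values, so no automorphism can map a type (iii) vertex to a type (i) vertex and $\Gamma_s$ is not vertex-transitive.

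Finally I would combine the pieces. As $\Gamma_K$ is vertex-transitive, $m_{\Gamma_K}$ is constant, so the invariant multiset $\{\,m_{\Gamma_K}(v)\,\}$ consists of a single repeated value, whereas $\{\,m_{\Gamma_s}(X)\,\}$ contains at least two distinct values for $0<s<g$. Because this multiset is preserved by any isomorphism, $\Gamma_K\not\cong\Gamma_s$ for $0<s<g$. I expect the only real obstacle to be the single external fact that $\Gamma_K$ is vertex-transitive; everything else is internal to the clique classification already established. Should vertex-transitivity be unavailable in the form needed, one would instead have to compute $m_{\Gamma_K}$ directly from the spread construction and match it against the three type-dependent values of Theorem~\ref{thm-max-cliq-thru-pt}, which would require precisely the construction details this argument is designed to sidestep.
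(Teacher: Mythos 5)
There is a genuine gap, and you have put your finger on it yourself: your entire argument rests on the assertion that $\Gamma_K$ is vertex-transitive, and this is not an available fact. Kantor's paper does not establish vertex-transitivity of $\Gamma_K$ in general; the automorphism group of $\Gamma_K$ is not determined there (only the spread stabiliser visibly acts, and its transitivity on points depends on the choice of spread). Indeed, the question of whether $\Gamma_K\cong\Gamma$ is open in general --- Kantor merely conjectures they differ --- and since $\Gamma$ \emph{is} vertex-transitive, any proof that $\Gamma_K$ is \emph{not} vertex-transitive would be at least as hard as the non-isomorphism problem you are trying to sidestep, while a proof that it \emph{is} vertex-transitive is simply not in the cited literature. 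So the ``single external fact'' you lean on is precisely the unproven step, and your fallback (computing $m_{\Gamma_K}$ directly from the spread) would, as you concede, require the construction details the argument was designed to avoid.

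The paper's proof substitutes a strictly weaker, and actually proven, property of $\Gamma_K$: by \cite[Lemma 3.3]{kantor}, the vertex set of $\Gamma_K$ can be \emph{partitioned} into maximal cliques. The paper then shows $\Gamma_s$, $0<s<g$, admits no such partition, using the classification of Theorem~\ref{max-cl}: every maximal clique of Class A contains $\alpha_s$, and every maximal clique of Class B meets $\alpha_s$ in an $(s-1)$-space, so for $s\geq 2$ any two maximal cliques intersect (two $(s-1)$-subspaces of the $s$-space $\alpha_s$ always share a point); for $s=1$, a partition would have to consist of three pairwise disjoint Class B cliques, covering $3(2^{g+1}-1)$ vertices, and a short count shows this never equals the number of points of $\E_{2r+1}$, $\H_{2r+1}$ or $\P_{2r}$ in the relevant range of $r$. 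Your observation that the maximal-clique counts of Theorem~\ref{thm-max-cliq-thru-pt} separate the vertex types of $\Gamma_s$ (equivalently, Lemma~\ref{lem:diffcliq}) is correct and does prove $\Gamma_s$ is not vertex-transitive for $s>0$; that part of your proposal is sound and is essentially what the paper uses in Section~\ref{sec:autgp}. But to complete the comparison with $\Gamma_K$ you should replace vertex-transitivity by partitionability into maximal cliques, which is both weaker and certified by \cite{kantor}.
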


\begin{proof} We use \cite[Lemma 3.3]{kantor} which shows that the vertices of $\Gamma_K$  can be partitioned into maximal cliques. We show that the vertices of $\Gamma_s$, $0<s<g$, cannot be partitioned into maximal cliques. 
Let $\C,\C'$ be two maximal cliques of $\Gamma_s$. 
We consider three cases.
If $\C,\C'$ are both of Class A, then they both contain $\alpha_s$, and so are not disjoint. If $\C$ is Class A and $\C'$ is Class B, 
then $\C$ contains $\alpha_s$, and $\C'$ meets $\alpha_s$ in a $(s-1)$-space. Hence as $s>0$, $\C'$ contains at least one point of $\alpha_s$, so $\C,\C'$ are not disjoint in this case.

Now consider the case where $\C,\C'$ are maximal cliques of $\Gamma_s$ of Class B. Both $\C,\C'$ meet $\alpha_s$ in a subspace of dimension $s-1$. If $s\geq 2$, then two subspaces of dimension $s-1$ contained in an $s$-space meet in at least a point,  and so $\C,\C'$ share at least a point.  Thus if $s\geq 2$, any two maximal cliques of $\Gamma_s$ share at least one vertex, and so the vertices of $\Gamma_s$  cannot be partition into maximal cliques, and hence $\Gamma_s$, $2\leq s<g$ is not isomorphic to $\Gamma_K$. 

Now suppose $s=1$, so $\alpha_1$ is a line. 
A partition of the vertices of $\Gamma_1$ into maximal cliques partitions the points of $\alpha_1$. As every maximal clique of $\Gamma_1$ contains a point of $\alpha_1$, we are looking for a partition of $\Gamma_1$ into 
 three maximal cliques of Class B, one through each point of $\alpha_1$.  We show there is no such partition. First, a maximal clique has $2^{g+1}-1$ points, so three pairwise disjoint maximal cliques contain $x=3(2^{g+1}-1)$ points, with either $g=r-1$ or $r$. 
As $0<s<g$, it follows that $g\ge2$. Thus for the elliptic and parabolic case we have $r\ge 3$ and for the hyperbolic case we have $r\ge 2$. 
However, as $q=2$, $\E_{2r+1}$ contains $2^{2r+1}-2^r-1$ points, $\H_{2r+1}$ contains $2^{2r+1}+2^r-1$ points and $\P_{2r}$ contains $2^{2r}-1$ points. 
None of these numbers is equal to $x$ when $r\ge 2$. Hence we cannot partition 
 the vertices of $\Gamma_s$, $s>0$ into maximal cliques. 
Thus by \cite[Lemma 3.3]{kantor}, $\Gamma_s$ is not isomorphic to $\Gamma_K$. 
\end{proof}

\section{The automorphism group of $\Gamma_s$}\Label{sec:autgp}

Let $\Q_n$ be a non-singular quadric in $\PG(n,2)$ of projective index $g\geq 1$. Let $\Gamma$ be the point-graph of $\Q_n$.
Let $\alpha_s$ be an $s$-space contained in $\Q_n$, $0\leq s<g$,  construct the partition of the points of $\Q_n$  given in Definition~\ref{def-partition}, and  let $\Gamma_s$ be the graph constructed in Theorem~\ref{main-thm}. 
If $s=0$, then by Theorem~\ref{thm:s0iso}, $\Gamma_0=\Gamma$  so $\Aut(\Gamma_0)=\Aut\Gamma$. 
In this section we determine  the automorphism group of the graph $\Gamma_s$, $0< s<g$.

First note that the group of collineations of $\PG(n,2)$ fixing $\Q_n$ is $\PGO(n+1,2)$, see \cite{HT}. Moreover, if $n\geq 4$, then the group of automorphisms of $\Gamma$ is $\Aut \Gamma\cong \PGO(n+1,2)$, see \cite[Chapter 8]{tits74}.

The partition of the points of $\Q_n$  given in Definition~\ref{def-partition} also partitions the vertices of $\Gamma$ and $\Gamma_s$, $0\leq s<g$. 
Vertices of type (i) in $\Gamma$ correspond in $\PG(n,2)$  to the points of $\alpha_s$. Let $(\Aut\Gamma)_{\alpha_s}$ denote the subgroup of automorphisms of $\Gamma$ that fix the set of vertices of type (i). 
As the graphs $\Gamma,\Gamma_s$ have the same set of vertices, if $\phi$ is a map acting on the vertices of $\Gamma$, then $\phi$ is also a map acting on the vertices of $\Gamma_s$. We will prove the following relationship between their automorphism groups. 

\begin{theorem}\Label{thm-gp}
Let $\Q_n$ be a non-singular quadric in $\PG(n,2)$ of projective index $g\geq 1$ with point-graph $\Gamma$. Let $\alpha_s$ be an $s$-space of $\Q_n$, $0< s<g$, and let $\Gamma_s$ be the graph constructed in Theorem~\ref{main-thm}. Then 
$\Aut(\Gamma_s)=(\Aut\Gamma)_{\alpha_s}$. 
\end{theorem}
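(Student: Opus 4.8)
The plan is to prove the two inclusions $(\Aut\Gamma)_{\alpha_s}\subseteq\Aut(\Gamma_s)$ and $\Aut(\Gamma_s)\subseteq(\Aut\Gamma)_{\alpha_s}$ separately. For the first inclusion, let $\phi\in(\Aut\Gamma)_{\alpha_s}$, so $\phi$ fixes the set of type~(i) vertices. Since $\phi$ comes from a collineation in $\PGO(n+1,2)$ fixing $\Q_n$ and fixing $\alpha_s$ setwise, it preserves the property of being an $(s+1)$-space through $\alpha_s$ contained in $\Q_n$; hence $\phi$ preserves the three types (i), (ii), (iii) of the partition in Definition~\ref{def-partition}. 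Because $\phi$ is a collineation, it preserves lines of $\Q_n$ and $2$-secants of $\Q_n$. By Corollary~\ref{cor-edges}, the adjacencies of $\Gamma_s$ are determined entirely by the vertex types together with whether the joining line is a line of $\Q_n$ or a $2$-secant (the data recorded in Table~\ref{table-edges}), all of which $\phi$ preserves. Therefore $\phi$ maps edges of $\Gamma_s$ to edges of $\Gamma_s$, so $\phi\in\Aut(\Gamma_s)$.

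For the reverse inclusion, let $\psi\in\Aut(\Gamma_s)$. The strategy is to recover $\alpha_s$, and hence the type partition, from graph-theoretic data that $\psi$ must preserve, and then to show $\psi$ is in fact a collineation fixing $\Q_n$. The key tool is the clique classification of Section~\ref{sec-cliques}: $\psi$ permutes the maximal cliques of $\Gamma_s$, which by Theorem~\ref{max-cl} are exactly the $g$-cliques of Class~A and Class~B. The first step is to show that $\psi$ preserves the partition into types (i), (ii), (iii). I would distinguish the types via the clique-count invariant of Theorem~\ref{thm-max-cliq-thru-pt}: a vertex of each type lies in a distinct number of maximal cliques (one checks from the table that the three counts are pairwise distinct for $0<s<g$), so $\psi$ must fix each type class setwise. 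In particular $\psi$ fixes the set of type~(i) vertices, which is exactly $\alpha_s$.

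Once the types are preserved, I would recover the original graph $\Gamma$ from $\Gamma_s$ in a $\psi$-equivariant way, thereby showing $\psi\in\Aut\Gamma$. By Corollary~\ref{cor-edges} the only adjacencies where $\Gamma_s$ differs from $\Gamma$ are the type~(ii)/type~(iii) pairs (the last row of Table~\ref{table-edges}): in $\Gamma$ a pair $\{Q,R\}$ is an edge precisely when $QR$ is a line of $\Q_n$, whereas in $\Gamma_s$ it is an edge precisely when $QR$ is a $2$-secant. Since on the type~(ii)/type~(iii) pairs the adjacency relation of $\Gamma$ is the complement (within such pairs) of that of $\Gamma_s$, and $\psi$ preserves the type classes, $\psi$ preserves $\Gamma$-adjacency as well as $\Gamma_s$-adjacency. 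Hence $\psi\in\Aut\Gamma$, and since it fixes the type~(i) set it lies in $(\Aut\Gamma)_{\alpha_s}$.

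I expect the main obstacle to be the step that $\psi$ preserves the type partition, and especially that it fixes the type~(i) class (i.e.\ fixes $\alpha_s$ setwise). The clean way is to read off pairwise-distinct maximal-clique counts from Theorem~\ref{thm-max-cliq-thru-pt}, but this requires checking that the three closed-form values are genuinely distinct in all three quadric cases and across the full range $0<s<g$ (including the boundary $s=g-1$, where the simplified values in the last column must be compared); this is the one place real care is needed. A subtlety to flag is that the argument identifying $\Gamma$ inside $\Gamma_s$ relies on the type partition already being $\psi$-invariant, so the clique-count step logically must come first; once it is in place, the remainder is the formal verification above.
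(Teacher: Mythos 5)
Your proposal is correct and follows essentially the same route as the paper: the forward inclusion $(\Aut\Gamma)_{\alpha_s}\subseteq\Aut(\Gamma_s)$ via collineations of $\PGO(n+1,2)$ preserving the types, lines and $2$-secants together with Corollary~\ref{cor-edges} (the paper's Lemma~\ref{thm:3-orbit}), and the reverse inclusion via the pairwise-distinct maximal-clique counts of Theorem~\ref{thm-max-cliq-thru-pt} (the paper's Lemma~\ref{lem:diffcliq}) followed by recovering $\Gamma$ from $\Gamma_s$ by complementing the type~(ii)/(iii) adjacencies (the paper's Lemma~\ref{lem:new}). Your closing remark is well taken: making type-invariance precede the recovery of $\Gamma$ is exactly the right logical order (the paper's Lemma~\ref{lem:new} tacitly relies on it), and omitting the Witt-theorem transitivity argument is harmless here, since that ingredient is needed only for the stronger three-orbit statement, not for the equality of automorphism groups.
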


In order to prove this theorem, we need a series of preliminary lemmas, the first relies on an application of Witt's Theorem, so we begin with a discussion on applying  Witt's Theorem to non-singular quadrics of $\PG(n,2)$, see 
\cite[Chapter 7]{taylor1992geometry} for more details. Let $V$ be a vector space of dimension $n+1$ over $\GF(2)$, and let $f(x_0,\ldots,x_n)$ be a quadratic form on $V$ with associated bilinear form $b(x,y)=b(x+y)-b(x)-b(y)$. The radical of $f$ in $V$  is the subspace $\rad f=\{u\in V\st b(u,v)=0 {\rm \ for\ all\ } v\in V\}$. 
Let $U$ be a subspace of $V$ and suppose there exists a linear isometry $\varphi\colon U\rightarrow V$  with respect to $f$ (that is, $\varphi$ is an invertible linear map and $f(u)=f(\varphi(u))$ for all $u\in U$. 
Then Witt's theorem says that  there exists a linear isometry $\zeta\colon V\rightarrow V$ such that $\zeta(u)=\varphi(u)$ for all $u\in U$ if and only if $\varphi(U\cap\rad f)=\varphi(U)\cap \rad f$. We interpret this in the projective space $\PG(n,2)$ associated with $V$. Let $\Q_n$ be a non-singular quadric in $\PG(n,2)$ with homogeneous equation $f(x_0,\ldots,x_n)=0$. If $n$ is odd, then $\rad f=\emptyset$. If $n$ is even, then $\Q_n=\P_n$ and $\rad f$ is the nucleus point $N$ of $\P_n$. 
As an example, let  $\Pi_1,\Pi_2$ be subspaces of $\Q_n$ of the same dimension. If $\Q_n$ has a nucleus $N$, then $N\notin\Q_n$, so neither $\Pi_1$ nor $\Pi_2$ contain $N$. 
As there exists a collineation   of $\PG(n,2)$ that maps $\Pi_1$ to $\Pi_2$,  it follows from Witt's theorem that  there exists a collineation of $\PG(n,2)$ that fixes $\Q_n$ and maps $\Pi_1$ to $\Pi_2$.
We use Witt's Theorem to prove the following lemma.

\begin{lemma}\Label{lem:three-orbs} Let $\Q_n$ be a non-singular quadric in $\PG(n,2)$  of projective index $g\geq 1$. Let $s$ be an integer,  $0\leq s<g$, let $\alpha_s$ be an $s$-space of $\Q_n$, and partition the points of $\Q_n$ into types {\rm (i), (ii), (iii)} as in Definition~\ref{def-partition}. Then the subgroup of $\PGO(n+1,2)$ fixing $\alpha_s$ is transitive on the points of each type.
\end{lemma}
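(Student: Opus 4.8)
The plan is to show that the stabiliser $G_{\alpha_s}$ of $\alpha_s$ in $\PGO(n+1,2)$ acts transitively on each of the three point-sets of types (i), (ii), (iii) separately, using Witt's Theorem as set up in the preceding discussion. Throughout I identify points of $\PG(n,2)$ with (projective) $1$-dimensional subspaces of the underlying vector space $V$, and I use that any collineation fixing $\alpha_s$ and $\Q_n$ necessarily permutes the points within each type, since the type of a point $X$ is determined entirely by the incidence data $X\in\alpha_s$ (type (i)), $\langle X,\alpha_s\rangle\subset\Q_n$ with $X\notin\alpha_s$ (type (ii)), or neither (type (iii)) — all of which are preserved by any element of $G_{\alpha_s}$. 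So it suffices to produce, for any two points $X,X'$ of the same type, an element of $G_{\alpha_s}$ carrying $X$ to $X'$.

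\textbf{Type (i).} Here $X,X'\in\alpha_s$. I would take the identity map on $\alpha_s$ as a starting isometry but instead construct a linear isometry $\varphi$ of the subspace $\alpha_s$ (viewed as a subspace of $V$) to itself that fixes $\alpha_s$ setwise while sending $X$ to $X'$; since $\alpha_s$ is totally singular, every invertible linear map of its vector space preserves the (identically zero) restriction of $f$, so any such $\varphi$ is automatically an isometry. To extend $\varphi$ to all of $V$ by Witt's Theorem I need the radical condition $\varphi(\alpha_s\cap\rad f)=\varphi(\alpha_s)\cap\rad f$; since $\alpha_s\subset\Q_n$ avoids the nucleus (when $n$ is even) and $\rad f=\emptyset$ (when $n$ is odd), we have $\alpha_s\cap\rad f=\emptyset$ in the projective picture, so the condition holds trivially. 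Witt then gives an isometry $\zeta$ of $V$ fixing $\alpha_s$ setwise and sending $X$ to $X'$, which descends to the required element of $G_{\alpha_s}$.

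\textbf{Types (ii) and (iii).} For these I would extend the chosen point to the whole configuration $\langle X,\alpha_s\rangle$. For type (ii), $\langle X,\alpha_s\rangle$ and $\langle X',\alpha_s\rangle$ are both $(s+1)$-dimensional totally singular subspaces containing $\alpha_s$; I define $\varphi$ on the vector space of $\langle X,\alpha_s\rangle$ by fixing $\alpha_s$ pointwise and sending a vector representative of $X$ to one of $X'$. Again total singularity makes $\varphi$ an isometry, the radical condition is vacuous as before, and Witt produces a global isometry fixing $\alpha_s$ and mapping $X\mapsto X'$. For type (iii), $\langle X,\alpha_s\rangle$ is an $(s+1)$-space meeting $\Q_n$ in $\alpha_s$ together with the extra structure forced by $f$; the key point is that all such configurations $(\alpha_s,X)$ with $X$ of type (iii) are isometric, so I define $\varphi$ on $\langle X,\alpha_s\rangle$ fixing $\alpha_s$ and sending $X$ to $X'$ while preserving $f$ on this subspace, then apply Witt as before.

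The main obstacle I anticipate is the type (iii) case, and specifically justifying that $\varphi$ can be chosen to be an isometry on $\langle X,\alpha_s\rangle$ — unlike the totally singular situations of types (i) and (ii), here $f$ does not vanish identically on $\langle X,\alpha_s\rangle$, so I must check that the quadratic form restricted to $\langle X,\alpha_s\rangle$ looks the same for every choice of type (iii) point $X$ (e.g. that $f(X)$ and the relevant bilinear pairings $b(X,\cdot)$ on $\alpha_s$ agree up to the freedom available, so that a form-preserving $\varphi$ exists). Over $\GF(2)$ this amounts to a short finite computation on the $(s+2)$-dimensional vector space underlying $\langle X,\alpha_s\rangle$, and I would verify the radical condition $\varphi(\langle X,\alpha_s\rangle\cap\rad f)=\varphi(\langle X,\alpha_s\rangle)\cap\rad f$ directly, noting once more that $\alpha_s$ avoids $\rad f$ and that any subtlety is confined to whether the point $X$ pairs with $\rad f$, which I can control by choosing representatives appropriately. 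Once these isometries are produced, transitivity on each type follows immediately.
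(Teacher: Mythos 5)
Your overall route is the same as the paper's: produce an isometry on the subspace $\langle X,\alpha_s\rangle$ carrying one configuration to the other and extend it via Witt's Theorem, the cases (i) and (ii) being immediate because totally singular subspaces carry the zero form and, lying on $\Q_n$, avoid the nucleus. Notably, the step you flagged as your main obstacle is in fact unproblematic: for $X$ of type (iii), Result~\ref{quadric-deg} gives $\langle X,\alpha_s\rangle\cap\Q_n=\alpha_s\cup\beta$ for a second $s$-space $\beta$ through $X$, and over $\GF(2)$ a quadratic form, being a function to $\GF(2)$, is determined by its zero set; hence $f$ restricted to $\langle X,\alpha_s\rangle$ is exactly $\ell_1\ell_2$ where $\ell_1,\ell_2$ are the linear forms cutting out $\alpha_s$ and $\beta$, and any linear map sending an adapted basis of $(\alpha_s\cap\beta,\ \alpha_s,\ X)$ to one of $(\alpha_s\cap\beta',\ \alpha_s,\ X')$ is automatically an isometry fixing $\alpha_s$ setwise. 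So all such pairs $(\alpha_s,X)$ are indeed isometric, as you guessed.

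The genuine gap is the radical condition in the type (iii) case when $n$ is even. There $\rad f$ is the nucleus $N$ of $\P_n$, and Witt's criterion $\varphi(U\cap\rad f)=\varphi(U)\cap\rad f$ can fail if $N$ lies in $\langle X,\alpha_s\rangle$ or $\langle X',\alpha_s\rangle$; whether $N$ lies in such a span is a fixed projective-geometric fact about the configuration, \emph{not} something you can ``control by choosing representatives appropriately''---changing vector representatives of $X$ does not change the span $\langle X,\alpha_s\rangle$ at all. What is actually needed, and what the paper spends most of its proof establishing, is that $N\notin\langle X,\alpha_s\rangle$ for \emph{every} type (iii) point $X$. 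The paper proves this by intersecting the tangent hyperplanes $\Sigma_P$ at the points $P\in\alpha_s$: the space $\Sigma=\cap_{P\in\alpha_s}\Sigma_P$ contains $N$ but only quadric points of types (i) and (ii), so $\langle\alpha_s,N\rangle$ contains no type (iii) point, forcing $\Pi\cap\langle\alpha_s,N\rangle=\alpha_s$ and $N\notin\Pi$. (A quick alternative: if $N\in\Pi=\langle X,\alpha_s\rangle$, take $Y\in\alpha_s\setminus\beta$; the line $NY$ lies in $\Pi$ and meets the hyperplane $\beta$ of $\Pi$ in a third point, so $NY$ is a $2$-secant of $\Q_n$ through the nucleus, contradicting that every line through $N$ is a tangent.) Without some such argument your Witt application for type (iii) is incomplete, since a mismatch of radicals would block the extension.
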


\begin{proof}
Let $P,P'$ be two points of $\Q_n$ of type (i), so $P,P'\in\alpha_s$. There is a collineation of $\PG(n,2)$ that fixes $\alpha_s$, and maps $P$ to $P'$. Hence by Witt's theorem, there is a collineation of $\PG(n,2)$ fixing $\alpha_s$ and $\Q_n$, and mapping $P$ to $P'$. Hence $\PGO(n+1,2)_{\alpha_s}$ is transitive on the points of $\Q_n$ of type (i).

Let $Q,Q'$ be points of $\Q_n$ of type (ii), so $\Pi=\langle Q,\alpha_s\rangle$ and  $\Pi'=\langle Q',\alpha_s\rangle$ are $(s+1)$-spaces contained in $\Q_n$. There is a collineation of $\PG(n,2)$ that maps $\Pi$ to $\Pi'$,  fixes $\alpha_s$, and maps $Q$ to $Q'$. Hence by Witt's Theorem, there is a collineation of $\PG(n,2)$ that fixes  $\alpha_s$ and $\Q_n$, and maps $Q$ to $Q'$. Hence $\PGO(n+1,2)_{\alpha_s}$ is transitive on the points of $\Q_n$ of type (ii).

Let $R,R'$ be points of $\Q_n$ of type (iii), so $\Pi=\langle R,\alpha_s\rangle$ and  $\Pi'=\langle R',\alpha_s\rangle$ are $(s+1)$-spaces which are not contained in $\Q_n$. 
Now $\Pi$ is an $(s+1)$-space, and $\Pi\cap\Q_n$ contains $\alpha_s$ and the point $R\notin\alpha_s$, hence by Result~\ref{quadric-deg}, $\Pi\cap\Q_n$ is exactly two $s$-spaces. Similarly, $\Pi'\cap\Q_n$ is two $s$-spaces, one being $\alpha_s$. 
So there is an automorphism of $\PG(n,2)$ that maps $\Pi$ to $\Pi'$, fixes $\alpha_s$, and maps $R$ to $R'$. As $\Pi,\Pi'$ are  not contained in $\Q_n$, in order to apply Witt's Theorem, we need to consider the nucleus $N$ of $\Q_n$ when $n$ is even. 
Suppose $n$ is even, so $\Q_n=\P_n$, and $\P_n$ has nucleus a point $N\notin\P_n$. We show that neither $\Pi$ nor $\Pi'$ contain $N$.
Let $P\in\alpha_s\subset\P_n$ and let $\Sigma_P$ be the tangent hyperplane to $\P_n$ at $P$. So $\Sigma_P$ contains $N$ and all the lines of $\P_n$ through $P$. Let $\Sigma=\cap_{P\in\alpha_s} \Sigma_P$, then $\Sigma$ contains $N$ and points of type (i) and (ii), but no points of type (iii). 
As $\langle \alpha_s,N\rangle$ is an $(s+1)$-space contained in $\Sigma$, it contains no points of type (iii). As the $(s+1)$-space $\Pi$ contains points of type (iii), $\Pi$ meets $\langle \alpha_s,N\rangle$ in exactly the $s$-space $\alpha_s$. Thus $N\notin \Pi$, and similarly $N\notin\Pi'$. 
Hence by Witt's Theorem there is a collineation of $\PG(n,2)$ that fixes $\alpha_s$ and $\Q_n$, and maps $R$ to $R'$. Thus  $\PGO(n+1,2)_{\alpha_s}$ is transitive on the points of $\Q_n$ of type (iii).
\end{proof}

We now show that if $s>0$, then $\Aut(\Gamma_s)$  has at least three orbits on the vertices of $\Gamma_s$, namely the vertices of each type.

\begin{lemma}\Label{lem:diffcliq} 
For $0<s<g$, the vertices of  $\Gamma_s$ of different types lie in a different number of maximal cliques. 
\end{lemma}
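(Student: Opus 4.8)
The plan is to read the three clique-counts straight off Theorem~\ref{thm-max-cliq-thru-pt} and compare them. I would treat the three quadric types separately, carrying out the elliptic case $\Q_n=\E_{2r+1}$ in full and noting that $\H_{2r+1}$ and $\P_{2r}$ are handled by the identical computation. In the elliptic case $g=r-1$; set the common factor $M=(2^2+1)(2^3+1)\cdots(2^{r-s-1}+1)$, with the convention that $M=1$ when this product is empty (which happens exactly when $s=g-1$). Then Theorem~\ref{thm-max-cliq-thru-pt} gives the number of maximal cliques through a vertex of type (i), (ii), (iii) as, respectively,
$$m_1=M(2^{r-s}+1)(2^{r+1}-2^{r-s+1}+1),\qquad m_2=M(2^{r+1}-2^{r-s}+1),\qquad m_3=M(2^{r-s}+1).$$
Since $M>0$, it is enough to compare these after cancelling $M$.

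The key step is then three elementary inequalities, each of which collapses to the hypothesis $s>0$. First, $m_3-m_2=M(2^{r-s+1}-2^{r+1})<0$ because $r-s+1<r+1$, so $m_3<m_2$. Second, $m_1-m_3=M(2^{r-s}+1)2^{r-s+1}(2^s-1)>0$, using $2^s-1>0$, so $m_1>m_3$. Third, a short expansion gives $m_1-m_2=M2^{2r-2s+1}(2^s-1)>0$, so $m_1>m_2$. Hence $m_3<m_2<m_1$ and the three counts are pairwise distinct, which is exactly the claim.

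I do not expect a genuine obstacle here; the work is bookkeeping rather than ideas. The two points needing care are (a) checking that the displayed factorisations of $m_1,m_2,m_3$ agree with both columns of Theorem~\ref{thm-max-cliq-thru-pt}, in particular at the boundary $s=g-1$ where $M$ is the empty product, and (b) confirming that the ranges forced by $0<s<g$ (namely $r\ge3$ in the elliptic and parabolic cases, $r\ge2$ in the hyperbolic case) keep every exponent positive so the inequalities are strict. For the hyperbolic and parabolic quadrics the same three-line argument applies with the corresponding products and projective indices from Theorem~\ref{thm-max-cliq-thru-pt}; in every case the decisive inequality reduces to $2^s-1>0$, i.e.\ to $s>0$, which is precisely the hypothesis of the lemma.
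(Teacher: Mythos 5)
Your proposal is correct and follows essentially the same route as the paper: read the three counts off Theorem~\ref{thm-max-cliq-thru-pt}, cancel the common product, and verify the pairwise inequalities by elementary algebra, with each difference reducing to a positive multiple of $2^s-1$. The only (minor) difference is that you absorb the boundary case $s=g-1$ uniformly via the empty-product convention $M=1$, whereas the paper treats $s=g-1$ as a brief separate case using the explicit values $5(2^{r+1}-7)$, $2^{r+1}-3$, $5$ and $r\geq 3$.
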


\begin{proof} 
We prove the result for the case $\Q_n=\E_{2r+1}$, the cases when $\Q_n$ is $\H_{2r+1}$ and $\P_{2r}$ are similar.
Comparing the number of cliques through vertices of type (i), (ii) and (iii) in $\Gamma_s$ from Theorem~\ref{thm-max-cliq-thru-pt}, it is sufficient to show that $k_!,k_2,k_3$ are distinct where 
\[
k_1=(2^{r-s}+1)(2^{r+1}-2^{r-s+1}+1),\quad k_2 =2^{r+1}-2^{r-s}+1
,\quad  k_3=2^{r-s}+1.
\]
If $0<s<g-1$, then $k_1-k_2=2^{r-s}(2^{r+1}-2^{r-s+1})>0$ and  $k_2-k_3=2^{r+1}-2^{r-s+1}>0$. Hence $k_1>k_2>k_3$, that is vertices of different types lie in a different number of maximal cliques.  If $0<s=g-1$, then $r\geq 3$ and so $k_1,k_2,k_3$ are distinct. 

\end{proof}

\begin{lemma}\Label{thm:3-orbit} 
If $0<s<g$, then $(\Aut \Gamma)_{\alpha_s} \subseteq \Aut(\Gamma_s)$. Further, $\Aut(\Gamma_s)$ has exactly three orbits on the vertices of $\Gamma_s$, namely the vertices of each type.
\end{lemma}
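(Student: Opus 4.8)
The plan is to prove the two assertions of Lemma~\ref{thm:3-orbit} in turn, using the transitivity result of Lemma~\ref{lem:three-orbs} and the clique-counting distinction of Lemma~\ref{lem:diffcliq}.

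First I would show the containment $(\Aut\Gamma)_{\alpha_s}\subseteq\Aut(\Gamma_s)$. Let $\phi\in(\Aut\Gamma)_{\alpha_s}$. Since $n\geq 4$ (as $g\geq 2$ when $s>0$), we have $\Aut\Gamma\cong\PGO(n+1,2)$, so $\phi$ arises from a collineation of $\PG(n,2)$ fixing $\Q_n$ and fixing the set of type~(i) vertices, i.e.\ fixing the point set $\alpha_s$. Such a collineation preserves membership of lines in $\Q_n$ and preserves the property of a line being a $2$-secant of $\Q_n$; moreover, because it fixes $\alpha_s$ setwise, it permutes the $(s+1)$-spaces of $\Q_n$ through $\alpha_s$ among themselves, and hence preserves each of the three point-types in Definition~\ref{def-partition}. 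By Corollary~\ref{cor-edges}, the edges of $\Gamma_s$ are determined entirely by the types of the two endpoints together with whether the joining line is a line of $\Q_n$ or a $2$-secant of $\Q_n$ (as recorded in Table~\ref{table-edges}). Since $\phi$ preserves all of this data, it sends edges of $\Gamma_s$ to edges of $\Gamma_s$ and non-edges to non-edges, so $\phi\in\Aut(\Gamma_s)$.

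Next I would establish that $\Aut(\Gamma_s)$ has exactly three orbits on vertices, namely the three types. That these types are \emph{contained in} distinct orbits is immediate from Lemma~\ref{lem:diffcliq}: any graph automorphism must permute maximal cliques among themselves, hence it preserves the number of maximal cliques through a vertex, and Lemma~\ref{lem:diffcliq} shows this number distinguishes the three types when $0<s<g$. Therefore no automorphism can carry a vertex of one type to a vertex of another type, so each $\Aut(\Gamma_s)$-orbit is contained in a single type. For the reverse direction --- that each type is a \emph{single} orbit --- I would invoke the containment just proved together with Lemma~\ref{lem:three-orbs}: the group $(\Aut\Gamma)_{\alpha_s}\cong\PGO(n+1,2)_{\alpha_s}$ is transitive on the points of each type, and since $(\Aut\Gamma)_{\alpha_s}\subseteq\Aut(\Gamma_s)$, the larger group $\Aut(\Gamma_s)$ is also transitive on the points of each type. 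Combining the two directions, the three types are precisely the orbits of $\Aut(\Gamma_s)$.

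The step I expect to require the most care is the containment $(\Aut\Gamma)_{\alpha_s}\subseteq\Aut(\Gamma_s)$, specifically the verification that a $\Q_n$-collineation fixing $\alpha_s$ preserves the point-types. The subtle point is that preserving types is equivalent to preserving the collection of $(s+1)$-spaces of $\Q_n$ through $\alpha_s$, which follows because a collineation fixing $\alpha_s$ setwise and fixing $\Q_n$ necessarily maps subspaces of $\Q_n$ containing $\alpha_s$ to subspaces of $\Q_n$ containing $\alpha_s$. Once type-preservation is in hand, the rest is a direct appeal to Corollary~\ref{cor-edges}, so the geometric content is concentrated in this one observation.
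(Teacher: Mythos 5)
Your proposal is correct and follows essentially the same route as the paper: the containment $(\Aut\Gamma)_{\alpha_s}\subseteq\Aut(\Gamma_s)$ via the identification $\Aut\Gamma\cong\PGO(n+1,2)$ and the edge description of Corollary~\ref{cor-edges}, then exactly three orbits by combining the transitivity of Lemma~\ref{lem:three-orbs} (each type is one orbit) with the clique-count invariant of Lemma~\ref{lem:diffcliq} (types lie in distinct orbits). Your only deviation is proving type-preservation directly from the fact that a collineation fixing $\Q_n$ and $\alpha_s$ setwise permutes the $(s+1)$-spaces of $\Q_n$ through $\alpha_s$, where the paper cites Lemma~\ref{lem:three-orbs}; your direct argument is, if anything, the cleaner justification of that step.
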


\begin{proof} Recall that $\Gamma$ is the point-graph of a non-singular quadric  $\Q_n$ in $\PG(n,2)$ with projective index $g\geq 1$;  $\alpha_s$ is an $s$-space contained in $\Q_n$; and the vertices of $\Gamma$ are partitioned into  types (i), (ii) and (iii) as given in Definition~\ref{def-partition}.
Note that as $0<s<g$, we need $g>1$, and so $n\geq 5$.

 Let $\phi\in(\Aut\Gamma)_{\alpha_s}$, so $\phi$ is an automorphism of $\Gamma$ that fixes  the set of vertices of $\Gamma$ of type (i).  As $\Gamma,\Gamma_s$ have the same set of vertices, $\phi$ acts on the vertices of $\Gamma_s$, and fixes the set of vertices of $\Gamma_s$ of type (i). 
Further,  $\phi$ induces a bijection denoted $\bar{\phi}$ acting on the points of $\Q_n$ and fixing $\alpha_s$. As $n\geq 5$, we have $\Aut \Gamma\cong \PGO(n+1,2)$ (see \cite[Chapter 8]{tits74}) so $\bar{\phi}\in\PGO(n+1,2)_{\alpha_s}$. By Lemma~\ref{lem:three-orbs},  $\bar\phi$ preserves the type of a point in $\Q_n$, hence $\phi$ preserves the type of a vertex in $\Gamma_s$. By Corollary~\ref{cor-edges}, the edges of $\Gamma_s$ are described in Table~\ref{table-edges}. As the collineation $\bar\phi$ maps lines (respectively 2-secants) of $\Q_n$ to lines (2-secants) of $\Q_n$, the map $\phi$ preserves adjacencies and non-adjacencies of vertex pairs of $\Gamma_s$. That is $\phi\in\Aut(\Gamma_s)$, and so 
 $(\Aut\Gamma)_{\alpha_s}\subseteq \Aut(\Gamma_s)$. 
 
Further, by Lemma~\ref{lem:three-orbs}, 
$\PGO(n,2)_{\alpha_s}$ is transitive on the points of $\Q_n$ of each type, so $(\Aut\Gamma)_{\alpha_s}$ is  transitive on the vertices of $\Gamma$ of each type. 
Hence 
$\Aut(\Gamma_s)$ has at most three orbits on the vertices of $\Gamma_s$. By
 Lemma~\ref{lem:diffcliq}, $\Aut(\Gamma_s)$ has at least three orbits on the vertices of $\Gamma_s$. Hence $\Aut(\Gamma_s)$ has exactly three orbits on the vertices of $\Gamma_s$, namely the vertices of each type.
 \end{proof}

\begin{lemma}\Label{lem:new}
For $0\leq s<g$,  
$\Aut(\Gamma_s)\subseteq\Aut\Gamma$.
\end{lemma}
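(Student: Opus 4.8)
The plan is to exploit the fact that $\Gamma$ and $\Gamma_s$ share the same vertex set and differ only on the pairs of type {\rm (ii)}--{\rm (iii)}, where, by Table~\ref{table-edges} together with Corollary~\ref{cor-edges}, their adjacency relations are exactly complementary. First I would dispose of the boundary case $s=0$: by Theorem~\ref{thm:s0iso} we have $\Gamma_0=\Gamma$, so $\Aut(\Gamma_0)=\Aut\Gamma$ and the inclusion is trivial. Thus the substance of the argument is the case $0<s<g$, which I treat next.

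For $0<s<g$, let $\psi\in\Aut(\Gamma_s)$. By Lemma~\ref{thm:3-orbit} the three types of vertices are precisely the three orbits of $\Aut(\Gamma_s)$ on the vertices of $\Gamma_s$; since every orbit is invariant under each element of the group, $\psi$ maps each type to itself. In particular $\psi$ sends type {\rm (ii)}--{\rm (iii)} pairs to type {\rm (ii)}--{\rm (iii)} pairs, and sends every pair of any other combination of types to a pair of the same combination. This type-preservation is the one place where genuine input is needed, and it is exactly where Lemma~\ref{thm:3-orbit} is invoked; the remainder is a direct edge-by-edge comparison.

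The key observation is the comparison of edge sets. Since every line of $\PG(n,2)$ has exactly three points, any line joining two distinct points of $\Q_n$ either lies on $\Q_n$ or meets it in exactly those two points (a 2-secant), and these two cases are mutually exclusive. Comparing Table~\ref{table-edges} with the point-graph $\Gamma$ (in which two vertices are adjacent if and only if the joining line lies on $\Q_n$), I would record that $\Gamma$ and $\Gamma_s$ have identical adjacency on every pair that is not of type {\rm (ii)}--{\rm (iii)}, whereas on a type {\rm (ii)}--{\rm (iii)} pair $\{Q,R\}$ the relation is reversed: $\{Q,R\}$ is an edge of $\Gamma$ iff $QR$ is a line of $\Q_n$, iff $QR$ is not a 2-secant, iff $\{Q,R\}$ is not an edge of $\Gamma_s$.

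Finally I would conclude by combining these two facts. Because $\psi$ preserves types, it preserves the partition of the pairs of vertices into those of type {\rm (ii)}--{\rm (iii)} and all others. On a non-{\rm (ii)}--{\rm (iii)} pair the $\Gamma$- and $\Gamma_s$-adjacencies coincide, and $\psi$ preserves $\Gamma_s$-adjacency, so it preserves $\Gamma$-adjacency there. On a type {\rm (ii)}--{\rm (iii)} pair $\{Q,R\}$ the two adjacencies are complementary, and $\psi$ maps such pairs to such pairs while preserving $\Gamma_s$-adjacency; hence $\{Q,R\}$ is a $\Gamma$-edge iff it is a non-$\Gamma_s$-edge iff $\{\psi Q,\psi R\}$ is a non-$\Gamma_s$-edge iff $\{\psi Q,\psi R\}$ is a $\Gamma$-edge. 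Thus $\psi$ preserves adjacency and non-adjacency in $\Gamma$ on every pair, so $\psi\in\Aut\Gamma$, and therefore $\Aut(\Gamma_s)\subseteq\Aut\Gamma$ for all $0\le s<g$.
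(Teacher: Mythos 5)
Your proof is correct and takes essentially the same approach as the paper: both reduce the claim (after disposing of $s=0$ via Theorem~\ref{thm:s0iso}) to the observation from Corollary~\ref{cor-edges} that $\Gamma$ and $\Gamma_s$ agree on all vertex pairs except type (ii)--(iii) pairs, where adjacency is exactly complemented. The only difference is that you explicitly invoke Lemma~\ref{thm:3-orbit} to justify that an automorphism of $\Gamma_s$ preserves vertex types, a step the paper's proof uses but leaves implicit.
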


\begin{proof}
First note that if $s=0$,  then by Theorem~\ref{thm:s0iso}, $\Gamma_0=\Gamma$  so $\Aut(\Gamma_0)=\Aut\Gamma$. 
Suppose $s>0$, and let $\phi\in\Aut(\Gamma_s)$. 
As $\Gamma$ and $\Gamma_s$ have the same set of vertices, $\phi$ is a bijection acting on the vertices of $\Gamma$. We show that $\phi$ preserves adjacencies and non-adjacencies of vertices in $\Gamma$. 

By Corollary~\ref{cor-edges} and Table~\ref{table-edges}, the only difference in adjacencies between vertices in $\Gamma$ and $\Gamma_s$ are between a vertex  of type (ii) and a vertex of type (iii). Let $X,Y$ be two vertices of $\Gamma$, there are two cases to consider. Firstly, if the pair $X,Y$ consists of one vertex of type (ii) and one vertex of type (iii), then $X,Y$ are adjacent in $\Gamma$ if and only if $X,Y$ are non-adjacent in $\Gamma_s$. Secondly, if the pair $X,Y$ does not consist of one vertex of type (ii) and one vertex of type (iii), then $X,Y$ are adjacent in $\Gamma$ if and only if $X,Y$ are adjacent in $\Gamma_s$. In either case, as $\phi$ preserves adjacency and non-adjacency in $\Gamma_s$, $\phi$  preserves the adjacency or non-adjacency of the vertex pair $X,Y$ in $\Gamma$. Hence $\phi\in\Aut\Gamma$ as required. 
\end{proof}

{\bf Proof of Theorem~\ref{thm-gp}}\ \ 
 By Lemma~\ref{lem:new}, $\Aut(\Gamma_s)\subseteq\Aut \Gamma$, and so $(\Aut (\Gamma_s))_{\alpha_s}\subseteq(\Aut\Gamma)_{\alpha_s}$. 
  As $s>0$,  
  by Lemma~\ref{thm:3-orbit}, $\Aut(\Gamma_s)$ fixes the set of vertices of type (i), that is $(\Aut (\Gamma_s))_{\alpha_s}=\Aut(\Gamma_s)$, hence $\Aut(\Gamma_s)\subseteq(\Aut\Gamma)_{\alpha_s}$. By Lemma~\ref{thm:3-orbit}, $(\Aut\Gamma)_{\alpha_s}\subseteq\Aut(\Gamma_s)$, hence $(\Aut\Gamma)_{\alpha_s}=\Aut(\Gamma_s)$ as required.
\hfill$\square$

Finally we show that given a graph $\Gamma_s$, we can reconstruct the graph $\Gamma$ and the quadric $\Q_n$.
If $s=0$ then $\Gamma=\Gamma_0$ by Theorem~\ref{thm:s0iso}. So suppose $0<s<g$, and define a graph $\Gamma$ whose vertices are the vertices of $\Gamma_s$.  The proof of  Lemma~\ref{lem:diffcliq} shows that the number of maximal cliques through a vertex of $\Gamma_s$ of type (i) is greater than 
 the number of maximal cliques through a vertex of type (ii), which  is greater than 
  the number of maximal cliques through a vertex  of type (iii). Hence we can partition the vertices of $\Gamma_s$ into their types by using 
 the number of maximal cliques through them.  
 Define the edges of $\Gamma$ to be the same as the edges of $\Gamma_s$, except swapping the adjacencies between vertices  of type (ii) and (iii). Then by Corollary~\ref{cor-edges}, $\Gamma$ is the point-graph of the quadric $\Q_n$ used to construct $\Gamma_s$. 
We can now reconstruct the quadric $\Q_n$ from $\Gamma$ as follows. 
The maximal cliques of $\Gamma$ are exactly the generators of $\Q_n$ in $\PG(n,2)$.  By intersecting the generators of $\Q_n$, we can  recover firstly the $(g-1)$-spaces of $\Q_n$, and so on, constructing the lattice of subspaces of the generators. Hence we can construct the points of $\Q_n$, all the lines contained in $\Q_n$, the planes contained in $\Q_n$, \dots, the $g$-spaces contained in $\Q_n$.

\section{Conclusion}

In summary, Table~\ref{table3} lists the parameters of the strongly regular graphs arising from the point-graph of each type of non-singular quadric. Further, we list the number of new non-isomorphic graphs with these parameters arising from our construction (that is, not including $\Gamma_0=\Gamma$). 
%
%

\begin{table}[h]\caption{Parameters of the strongly regular graphs $\Gamma_s$, $0\leq s<g$}\label{table3}
\begin{center}\begin{tabular}{|p{3.1 cm}|c|c|c|c|p{1.8cm}|}
\hline
\centering{quadric} &$\E_{2r+1}$, $r\geq 2$ &  $\H_{2r+1}$, $r\geq 1$ &  $\P_{2r}$, $r\geq 2$\\
\hline
\centering{$v$} &$2^{2r+1}-2^r-1$&$2^{2r+1}+2^r-1$&$2^{2r}-1$     \\
\centering{$k$}&$2^{2r}-2^r-2$&$2^{2r}+2^r-2$&$2^{2r-1}-2$\\
\centering{$\lambda$}&$2^{2r-1}-2^{r}-3$&$2^{2r-1}+2^{r}-3$&$2^{2r-2}-3$\\
\centering{$\mu$}&$2^{2r-1}-2^{r-1}-1$&$2^{2r-1}+2^{r-1}-1$&$2^{2r-2}-1$\\
\hline
\centering{\small number of new non-isomorphic  graphs}
&$r-2$&$r-1$&$r-2$\\ 
\hline
\end{tabular}\end{center}

\end{table}%

%
%
%

\end{document}